\theoremstyle{definition}
\newtheorem{defin}{Definition}[section]
\newtheorem*{rem}{Remark}
\newtheorem*{rems}{Remarks}
\theoremstyle{plain}
\newtheorem*{thm*}{Theorem}
\newtheorem{thm}[defin]{Theorem}
\newtheorem{lem}[defin]{Lemma}
\newtheorem{prop}[defin]{Proposition}
\DeclareMathSymbol{\widetildesym}{\mathord}{largesymbols}{"65} 
\newcommand\lowerwidetildesym{
  \!\!\text{\smash{\raisebox{-1.3ex}{
    $\widetildesym$}}}}
\newcommand\wtilde[1]{
  \mathchoice
    {\accentset{\displaystyle\lowerwidetildesym}{#1}}
    {\accentset{\textstyle\lowerwidetildesym}{#1}}
    {\accentset{\scriptstyle\lowerwidetildesym}{#1}}
    {\accentset{\scriptscriptstyle\lowerwidetildesym}{#1}}
}
\begin{document}

\newcommand{\Ker}{\mathop{\mathrm{Ker}}\nolimits}
\newcommand{\Image}{\mathop{\mathrm{Im}}\nolimits}
\newcommand{\rk}{\mathop{\mathrm{rk}}\nolimits}
\newcommand{\sgn}{\mathop{\mathrm{sgn}}\nolimits}
\newcommand{\id}{\mathop{\mathrm{id}}\nolimits}
\newcommand{\interior}{\mathop{\mathrm{int}}\nolimits}
\newcommand{\zj}{\boldsymbol{\mathsf{z}^j}}
\newcommand{\zjx}{\boldsymbol{\mathsf{z}}^{\boldsymbol{j}}_*}
\newcommand{\Cob}{\mathrm{Cob}}
\newcommand{\Z}{\mathbb{Z}}
\newcommand{\C}{\mathbb{C}}
\newcommand{\R}{\mathbb{R}}
\newcommand{\Q}{\mathbb{Q}}

\renewcommand{\:}{\colon}
 \renewcommand{\:}{\colon\thinspace}

\title{Cobordism groups of simple branched coverings}
\author{Csaba Nagy}
\date{}

\maketitle

\begin{abstract}
We consider branched coverings which are simple in the sense that any point of the target has at most one singular preimage. The cobordism classes of $k$-fold simple branched coverings between $n$-manifolds form an abelian group $\Cob^1(n,k)$. Moreover, $\Cob^1(*,k) = \bigoplus_{n=0}^{\infty} \Cob^1(n,k)$ is a module over $\Omega^{SO}_*$. We construct a universal $k$-fold simple branched covering, and use it to compute this module rationally. As a corollary, we determine the rank of the groups $\Cob^1(n,k)$. In the case $n = 2$ we compute the group $\Cob^1(2,k)$, give a complete set of invariants and construct generators.
\end{abstract}

\let\thefootnote\relax\footnote{The University of Melbourne, Parkville, VIC, 3000, Australia}
\let\thefootnote\relax\footnote{Keywords: Branched coverings, cobordism}
\let\thefootnote\relax\footnote{2010 Mathematics Subject Classification: 57R45, 57R90, 57M12}

\quad
\\[-50pt]

\section{Introduction}

Let $f \: \wtilde{M} \rightarrow M$ be a smooth map between oriented $n$-dimensional manifolds. It is called a \emph{branched covering} if it has only $\zj \times \id_{\R^{n-2}}$ type singularities for $j \geq 2$, where $\zj \: \R^2 \rightarrow \R^2$ is the complex $j^\text{th}$ power function. That is, if $f$ is not a local diffeomorphism at a point $\tilde{x} \in \wtilde{M}$, then we can find charts $\R^n \subset \wtilde{M}$ and $\R^n \subset M$ around $\tilde{x}$ and $f(\tilde{x})$ respectively such that the restriction of $f$ is $\zj \times \id_{\R^{n-2}} \: \R^n \rightarrow \R^n$. A branched covering is \emph{simple} if each point of $M$ has at most one singular preimage. 

Two simple branched coverings are \emph{cobordant}, if both their sources and targets are cobordant, and their disjoint union extends to a simple branched covering between the cobordisms. Our aim is to classify simple branched coverings up to cobordism, thus we will study the set of cobordism classes of degree-$k$ simple branched coverings between $n$-dimensional manifolds: 
\[
\Cob^1(n,k) = \left\{ f \: \wtilde{M} \rightarrow M \Biggm| 
\begin{gathered} 
\text{$\wtilde{M}$ and $M$ are $n$-dimensional manifolds} \\
\text{$f$ is a $k$-fold simple branched covering} 
\end{gathered}
\right\} \Bigg/ \text{cobordism}
\]
It is an abelian group, the group operation is disjoint union, and the inverse is obtained by reversing orientations. Moreover, the product of a simple branched covering with (the identity map of) a manifold is again a simple branched covering, so the direct sum 
\[
\Cob^1(*,k) = \bigoplus_{n=0}^{\infty} \Cob^1(n,k)
\] 
is a graded module over the oriented cobordism ring $\Omega^{SO}_*$.

The subset $\widetilde{V}_j \subset \wtilde{M}$ of points where the branched covering $f \: \wtilde{M} \rightarrow M$ has a $\zj$-type singularity is a codimension-$2$ submanifold. In the definition of branched coverings we may also require that the submanifolds $\widetilde{V}_j$ should be oriented. We will consider branched coverings both with and without this extra condition. The cobordism groups will be denoted by $\Cob^1_{SO}(n,k)$ in the oriented case and $\Cob^1_O(n,k)$ in the unoriented case.

The main computational results of this paper are: 

\begin{thm} \label{thm:main-mod}
(a) For a $k$-fold simple branched covering $f \: \wtilde{M} \rightarrow M$ between $n$-dimensional manifolds with oriented singular submanifolds let $a_j \: \widetilde{V}_j \rightarrow BSO_2$ be the map that induces the normal bundle of $\widetilde{V}_j \subset \wtilde{M}$, and let $[a_j]$ denote its bordism class in the oriented bordism group $\Omega^{SO}_{n-2}(BSO_2)$. The map
\[
\begin{aligned}
\Cob^1_{SO}(*,k) \otimes \Q &\rightarrow \left( \Omega^{SO}_* \bigoplus \left( \bigoplus_{j=2}^k \Omega^{SO}_{*-2}(BSO_2) \right) \right) \otimes \Q \text{\,,} \\
[f] \otimes 1 &\mapsto ([M], [a_2], [a_3], \ldots , [a_k]) \otimes 1
\end{aligned}
\]
is an isomorphism of graded $(\Omega^{SO}_* \otimes \Q)$-modules. 

(b) For an $f$ with unoriented singular submanifolds let $a_j \: \widetilde{V}_j \rightarrow BO_2$ be the map that induces the normal bundle of $\widetilde{V}_j \subset \wtilde{M}$, and let $[a_j]$ denote its bordism class in the twisted oriented bordism group $\Omega^{\gamma_O}_{n-2}(BO_2)$ (see Definitions \ref{def:twisted}, \ref{def:gamma} and \ref{def:normal-ind}). The map
\[
\begin{aligned}
\Cob^1_O(*,k) \otimes \Q &\rightarrow \left( \Omega^{SO}_* \bigoplus \left( \bigoplus_{j=2}^k \Omega^{\gamma_O}_{*-2}(BO_2) \right) \right) \otimes \Q \text{\,,} \\
[f] \otimes 1 &\mapsto ([M], [a_2], [a_3], \ldots , [a_k]) \otimes 1
\end{aligned}
\]
is an isomorphism of graded $(\Omega^{SO}_* \otimes \Q)$-modules. 
\end{thm}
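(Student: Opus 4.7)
The plan is to reinterpret $\Cob^1_{SO}(*,k)$ as the oriented bordism of a suitable classifying space via the universal $k$-fold simple branched covering (constructed earlier in the paper), and then compute that bordism rationally through the Atiyah--Hirzebruch spectral sequence.

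First I would verify well-definedness and compatibility with the $\Omega^{SO}_*$-module structure. Given a cobordism $F \: \wtilde{W} \to W$ between $f$ and $f'$, the target $W$ witnesses $[M] = [M']$ in $\Omega^{SO}_n$, while each singular submanifold $\widetilde{V}_j(F) \subset \wtilde{W}$ restricts on the boundary to $\widetilde{V}_j(f)$ and $\widetilde{V}_j(f')$, with the normal-bundle classifying maps extending across it, so $[a_j] = [a_j']$ in $\Omega^{SO}_{n-2}(BSO_2)$. Compatibility with the $\Omega^{SO}_*$-action is immediate from the product $f \times \id_N$.

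Next I would apply a Pontryagin--Thom-style argument to the universal construction to obtain an isomorphism $\Cob^1_{SO}(n,k) \cong \Omega^{SO}_n(X)$ for a classifying space $X$. Since the singularities are disjoint, each locally modelled on $\zj \times \id_{\R^{n-2}}$ with a rank-$2$ oriented normal bundle, the tubular neighborhood theorem should yield a stable equivalence
\[
X \;\simeq\; \mathrm{pt} \sqcup \bigvee_{j=2}^{k} \mathrm{Th}(\gamma^{SO}_{2}),
\]
where $\gamma^{SO}_{2} \to BSO_2$ is the universal rank-$2$ oriented bundle. Under this identification, the map in the theorem corresponds to the canonical decomposition
\[
\Omega^{SO}_n(X) \;\cong\; \Omega^{SO}_n \oplus \bigoplus_{j=2}^{k} \widetilde{\Omega}^{SO}_n\bigl(\mathrm{Th}(\gamma^{SO}_2)\bigr) \;\cong\; \Omega^{SO}_n \oplus \bigoplus_{j=2}^{k} \Omega^{SO}_{n-2}(BSO_2),
\]
the last step being the oriented Thom isomorphism.

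The map then becomes an isomorphism after tensoring with $\Q$ because the Atiyah--Hirzebruch spectral sequence for $\Omega^{SO}_*(X)$ collapses rationally (as $\Omega^{SO}_* \otimes \Q$ is a polynomial ring concentrated in degrees $\equiv 0 \pmod 4$). Part (b) is entirely parallel, except that the singular submanifolds $\widetilde{V}_j$ need not be orientable, and the compatibility between orientations of $\wtilde{M}$, $M$, and the now possibly non-orientable normal bundle introduces the twisting bundle $\gamma_O$; accordingly $BSO_2$ is replaced by $BO_2$ and $\Omega^{SO}_{*-2}(BSO_2)$ by the twisted oriented bordism $\Omega^{\gamma_O}_{*-2}(BO_2)$.

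The main obstacle is the Pontryagin--Thom identification $\Cob^1_{SO}(*,k) \cong \Omega^{SO}_*(X)$: one must realise arbitrary prescribed singular data by disjointly embedding representatives of the $\widetilde{V}_j$'s in the target with the required normal bundles and glueing in the local $\zj$-branched-cover models on tubular neighborhoods. The matching of sphere bundles across the glueing involves subtle divisibility conditions (essentially, the existence of a $j$-th root of $\nu_j$ as a rank-$2$ oriented bundle) that need not hold integrally but become trivial after passing to rational coefficients.
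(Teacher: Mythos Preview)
Your overall strategy---identify $\Cob^1(n,k)$ with oriented bordism of a classifying space, then split that bordism rationally via Thom isomorphisms---is the same as the paper's. But two concrete steps are misidentified, and the diagnosis of where $\Q$ is needed is off.

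First, the classifying space $X = B^1(k)$ is \emph{not} stably equivalent to $\mathrm{pt} \vee \bigvee_{j} T\gamma$, even rationally as stated. It is built by gluing $B^0(k)=BS_k$ to pieces $E\mu^j_*(\gamma)\times B^0(k-j)$ along their sphere bundles; so the ``regular'' stratum is $BS_k$, not a point, and each singular stratum carries an extra $BS_{k-j}$ factor recording the residual $(k-j)$-fold covering away from the branch locus. The quotient $B^1(k)/B^0(k)$ is a wedge of $(E\mu^j_*(\gamma)\times BS_{k-j})/(S\mu^j_*(\gamma)\times BS_{k-j})$, not of Thom spaces of $\gamma$. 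The passage to $\bigoplus_j \widetilde{\Omega}^{SO}_n(T\mu^j_*(\gamma))$ uses that $BS_k$ and $BS_{k-j}$ have the rational homology of a point; \emph{this} is where tensoring with $\Q$ enters. The integral isomorphism $\Cob^1(n,k)\cong \Omega^{SO}_n(B^1(k))$ itself holds without any rational hypothesis, so your last paragraph misplaces the obstruction: there is no ``$j$-th root of $\nu_j$'' issue, since the datum recorded is $\tilde\xi_j$ (the normal bundle in $\wtilde{M}$), and $\xi_j\cong \mu^j_*(\tilde\xi_j)$ is its $j$-th power, not the other way round.

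Second, and relatedly, the Thom space that appears is $T\mu^j_*(\gamma)$, not $T\gamma$, so the Pontryagin--Thom isomorphism lands in $\Omega^{\mu^j_*(\gamma)}_{n-2}(B\gamma)$. One then needs a short lemma identifying $\Omega^{\mu^j_*(\gamma)}_{n-2}(B\gamma)\cong \Omega^{\gamma}_{n-2}(B\gamma)$ (via the orientation correspondence induced by $\zjx$) before the target matches the statement. With these two corrections your outline becomes the paper's proof.
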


\begin{thm} \label{thm:main-rk}
The rank of the group $\Cob^1(n,k)$ is 
\[
\rk \Cob^1(n,k) = 
\begin{cases}
(k-1)\sum_{i=0}^{m-1}\pi(i) + \pi(m) & \text{if $n=4m$, } \\
(k-1)\sum_{i=0}^{m-1}\pi(i) & \text{in the oriented case, if $n=4m-2$, } \\
0 & \text{otherwise, }
\end{cases}
\]
where $\pi(i)$ denotes the number of partitions of $i$.
\end{thm}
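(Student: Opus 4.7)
The plan is to invoke Theorem~\ref{thm:main-mod} to reduce the rank computation to calculations of ranks of (twisted) bordism groups of classifying spaces, and then to evaluate these using the standard rational splitting of bordism. Since tensoring with $\Q$ preserves ranks, part (a) gives
\[
\rk \Cob^1_{SO}(n,k) = \rk \Omega^{SO}_n + (k-1)\,\rk \Omega^{SO}_{n-2}(BSO_2),
\]
and part (b) gives the analogous formula for $\Cob^1_O(n,k)$ with $\Omega^{\gamma_O}_{n-2}(BO_2)$ in place of $\Omega^{SO}_{n-2}(BSO_2)$. So the task reduces to three separate rank computations.

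Recall that $\Omega^{SO}_* \otimes \Q$ is polynomial on generators in degrees $4, 8, 12, \ldots$, giving $\rk \Omega^{SO}_{4m} = \pi(m)$ and vanishing otherwise. For the oriented bordism of $BSO_2 = \mathbb{CP}^\infty$, the rational Atiyah--Hirzebruch spectral sequence collapses for dimensional reasons, yielding
\[
\Omega^{SO}_*(X) \otimes \Q \;\cong\; H_*(X;\Q) \otimes_\Q (\Omega^{SO}_* \otimes \Q).
\]
Since $H_{2a}(BSO_2;\Q) = \Q$ for every $a \geq 0$ and vanishes in odd degrees, $\rk \Omega^{SO}_{n-2}(BSO_2)$ equals $\sum_{2a+4b = n-2} \pi(b)$. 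A direct count will yield $\sum_{i=0}^{m-1}\pi(i)$ in both cases $n = 4m$ and $n = 4m-2$, and zero for $n$ odd.

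For the twisted bordism of $BO_2$ appearing in part (b), the plan is to exploit the orientation double cover $BSO_2 \to BO_2$. A transfer argument (equivalently, a direct comparison of the relevant Thom spectra) should identify $\Omega^{\gamma_O}_*(BO_2) \otimes \Q$ with the $(-1)$-eigenspace of the deck involution on $\Omega^{SO}_*(BSO_2) \otimes \Q$. This involution is complex conjugation on $\mathbb{CP}^\infty$; since $c_1(\bar\gamma) = -c_1(\gamma)$, it acts on $H_{2a}(\mathbb{CP}^\infty;\Q)$ by $(-1)^a$. Hence the anti-invariants are concentrated in degrees $\equiv 2 \pmod 4$ with rank one in each such degree, and $\Omega^{\gamma_O}_{n-2}(BO_2) \otimes \Q$ vanishes unless $n = 4m$; in that case its rank is again $\sum_{i=0}^{m-1}\pi(i)$. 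Combining with the contribution of $\Omega^{SO}_n$ reproduces the stated formula in each of the three cases, and in particular forces the unoriented rank to vanish when $n = 4m-2$.

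The main obstacle will be the transfer-type identification used in the unoriented case: it requires unpacking the twisted-bordism definitions referenced in Theorem~\ref{thm:main-mod}(b) and checking that the corresponding Thom spectrum splits, after rationalization, as the $(-1)$-eigenspace of the double cover. Everything downstream of that step is elementary counting with the partition function.
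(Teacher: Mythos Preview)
Your oriented case is essentially the paper's argument verbatim: invoke Theorem~\ref{thm:main-mod}(a), collapse the rational Atiyah--Hirzebruch spectral sequence, and count using $H_*(\mathbb{CP}^\infty;\Q)$. The unoriented case, however, takes a genuinely different route. The paper identifies $\Omega^{\gamma_O}_{n-2}(BO_2)$ with $\widetilde{\Omega}^{SO}_n(T\mu^j_*(\gamma_O))$ (already established in the proof of Theorem~\ref{thm:main-mod}), applies the twisted Thom isomorphism $\widetilde{H}_i(T\mu^j_*(\gamma_O);\Q)\cong H_{i-2}(BO_2;\Q_w)$, and then \emph{cites} \v{C}adek's computation of $H_*(BO_2;\Q_w)$. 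You instead propose to realise $\Omega^{\gamma_O}_*(BO_2)\otimes\Q$ as the $(-1)$-eigenspace of the conjugation involution on $\Omega^{SO}_*(BSO_2)\otimes\Q$ via transfer for the double cover $BSO_2\to BO_2$, and then read off the anti-invariants of $H_*(\mathbb{CP}^\infty;\Q)$ directly from $c_1\mapsto -c_1$. Both approaches are correct and end at the same numbers; yours is more self-contained (no external citation needed) and conceptually pleasant, while the paper's is shorter once one is willing to quote \v{C}adek. The ``main obstacle'' you flag is real but mild: the cleanest way to justify it is to note that $H_*(BO_2;\Q_w)$ is, by definition of local coefficients, the $(-1)$-eigenspace of the deck involution on $H_*(BSO_2;\Q)$, and then feed this into the same Thom-isomorphism and Atiyah--Hirzebruch steps the paper already uses; arguing directly at the level of bordism via transfer also works but needs a sentence explaining why the twist matches $w_1(\gamma_O)$.
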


\begin{rem}
The torsion part of $\Cob^1(n,k)$ is not known in general, but there are some partial results. For example, we prove in \cite{thesis-bc} that $\Cob^1(n,k)$ does not contain $p$-torsion for primes $p > k$. 
\end{rem}

\begin{thm} \label{thm:main-2dim}
The cobordism group of $k$-fold branched coverings between $2$-di\-men\-si\-o\-nal manifolds is 
\[
\begin{aligned}
\Cob^1_{SO}(2,k) &\cong \Z^{k-1} & &\text{in the oriented case, } \\
\Cob^1_O(2,k) &\cong \Z_2^{k-2} & &\text{in the unoriented case. }
\end{aligned}
\]
\end{thm}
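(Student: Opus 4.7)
The plan is to classify $k$-fold simple branched coverings between oriented $2$-manifolds by the bordism classes of their singular $0$-manifolds $\widetilde{V}_j$. Any cobordism $F\colon W\to V$ of such coverings has singular locus a proper $1$-submanifold of $W$ whose branching order is locally constant along each component, so
\[
[\widetilde{V}_j]\in\Omega_0^{SO}=\Z\text{ (in the $SO$ case)}\quad\text{or}\quad[\widetilde{V}_j]\in\Omega_0^{O}=\Z_2\text{ (in the $O$ case)}
\]
is a cobordism invariant of $[f]$ for each $j\in\{2,\ldots,k\}$. Packaging these yields a homomorphism
\[
\Phi\colon\Cob^1(2,k)\longrightarrow\bigoplus_{j=2}^{k}\Omega_0,\qquad [f]\mapsto\bigl([\widetilde{V}_2],\ldots,[\widetilde{V}_k]\bigr),
\]
which I would prove is injective with image the subgroup described below.

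To pin down the image I would apply the sign character $\sgn\colon S_k\to\Z_2$ to the monodromy representation $\rho\colon\pi_1(M\setminus B)\to S_k$ of $f$ off its branch set $B$. The defining relator of $\pi_1(M\setminus B)$ has trivial image under $\sgn\circ\rho$, and each meridian around a $z^j$-branch contributes $(-1)^{j-1}$, forcing
\[
\sum_{j\text{ even}} n_j\equiv 0\pmod 2,
\]
where $n_j=|\widetilde{V}_j|$. The image of $\Phi$ therefore lies in the index-$2$ subgroup $L\subset\bigoplus_j\Omega_0$ determined by this parity. As an index-$2$ sublattice of $\Z^{k-1}$ in the $SO$ case, $L$ is free abelian of rank $k-1$; in the $O$ case, $L\cong\Z_2^{k-2}$. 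Surjectivity of $\Phi$ onto $L$ I would establish by exhibiting explicit degree-$k$ branched coverings of $S^2$: for each $j\ge 3$ a cover whose monodromies consist of one $j$-cycle together with the minimum number of transpositions compatible with the parity relation, and the double cover $z\mapsto z^2\colon S^2\to S^2$ stabilised by $k-2$ trivial sheets, which realises the tuple $(2,0,\ldots,0)$. A short linear algebra check confirms that these classes generate $L$.

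The injectivity of $\Phi$ is the main step. Given $f$ representing a class with $\Phi([f])=0$, after reducing to the case $f=f_1\sqcup(-f_2)$ with $n_j(f_1)=n_j(f_2)$ for all $j$, I would construct a cobordism $F\colon W\to M\times[0,1]$ from $f_1$ to $f_2$ as follows: pair each $z^j$-branch of $f_1$ in $M\times\{0\}$ with a $z^j$-branch of $f_2$ in $M\times\{1\}$, join the pairs by disjoint embedded arcs in the interior of $M\times[0,1]$, and branch the covering over these arcs. The main obstacle is that the two endpoints of a given arc carry monodromies that are conjugate but in general unequal $j$-cycles in $S_k$, so the monodromy representation of $f$ does not extend naively. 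I would resolve this by first applying a sequence of Hurwitz moves to each $f_i$—each realised by an isotopy of the branch set and therefore preserving $[f_i]$—to arrange that paired endpoints carry identical monodromies. The extension then succeeds, and the resulting $W$ has boundary $\widetilde{M}_1\sqcup(-\widetilde{M}_2)$, yielding the desired cobordism.
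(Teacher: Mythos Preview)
Your invariant $\Phi$ is the right one and coincides with the map $c$ the paper eventually singles out; the image is indeed the index-$2$ subgroup cut out by the parity of $\sum_i c_{2i}$, and explicit covers realise it. The injectivity argument, however, has a genuine gap. Hurwitz moves preserve the subgroup of $S_k$ generated by the local monodromies, so they cannot in general align two tuples with the same cycle-type profile. For instance, with $k=4$ the $4$-fold covers of $S^2$ with branch data $\bigl((12),(12),(34),(34)\bigr)$ and $\bigl((12),(12),(12),(12)\bigr)$ both have $\Phi=(4,0,0)$, but their monodromy groups are $\Z_2\times\Z_2$ and $\Z_2$ respectively, so no sequence of Hurwitz moves carries one tuple to the other and your arc construction over $M\times[0,1]$ cannot be completed as written. (Your unexplained ``reduction'' to $f_1\sqcup(-f_2)$ over a common target $M$ is already suspect for the same reason.) Even granting some stabilisation trick to get past this, a second obstruction remains: once all branch points have been cancelled in pairs you are left with an unbranched covering of a surface, and you must still show it is null-cobordant \emph{as a branched covering}. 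For $k\ge4$ the cobordism group of honest $k$-fold coverings of surfaces is $\Omega^{SO}_2(BS_k)\cong H_2(BS_k)\cong\Z_2$, and the substance of injectivity is precisely that this $\Z_2$ dies once branching is permitted --- a statement your outline never touches.

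The paper avoids both difficulties by working homologically through the classifying space. It identifies $\Cob^1(2,k)$ with $H_2(B^1(k))$ and computes this group via the long exact sequence of the pair $\bigl(B^1(k),BS_k\bigr)$. Your $\Phi$ is exactly the map $\beta$ in that sequence, so its injectivity is equivalent to the vanishing of the preceding map $\alpha\colon H_2(BS_k)\to H_2(B^1(k))$; this in turn follows from a computation of the boundary $\partial_3$ using Nakaoka's description of $H_*(BS_k)$ and of the product $m_*\colon H_*(BS_j)\otimes H_*(BS_{k-j})\to H_*(BS_k)$. No Hurwitz-type transitivity result is required.
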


\begin{thm} \label{thm:main-inv}
In the $2$-dimensional case the numbers of singular points of type $\zj$ (for every $2 \leq j \leq k$) form a complete set of invariants of $\Cob^1(2,k)$.
\end{thm}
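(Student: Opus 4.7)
By Theorem \ref{thm:main-2dim} the abstract structure of $\Cob^1(2,k)$ is already known, so the task reduces to showing that the singular-point counts separate cobordism classes. The plan is to prove that the map
\[
\Phi \: \Cob^1(2,k) \to \Z^{k-1}, \qquad [f] \mapsto \bigl(d_2(f), \ldots, d_k(f)\bigr),
\]
where $d_j(f) := |\widetilde{V}_j|$, is injective in the oriented case, and that its reduction modulo $2$ is injective in the unoriented case.

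First I would verify well-definedness. Given a cobordism $F \: W \to Y$ of simple branched coverings between $f$ and $g$, the singular locus $\widetilde{V}_j(F) \subset W$ is a properly embedded $1$-submanifold whose boundary is $\widetilde{V}_j(f) \sqcup \widetilde{V}_j(g)$. In the oriented setting this $1$-manifold is oriented, so its signed boundary count vanishes; since each $0$-dimensional $\widetilde{V}_j$ inside an oriented surface carries the canonical positive orientation (its orientation being forced by compatibility with the orientation of $\wtilde{M}$ along the identification $\nu \cong T\wtilde{M}|_{\widetilde{V}_j}$), signed count equals unsigned count and $d_j(f) = d_j(g) \in \Z$. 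In the unoriented setting the same argument only forces a mod-$2$ equality. Next I would construct generators: for each $2 \leq j \leq k$ let
\[
f_j \: S^2 \sqcup \underbrace{S^2 \sqcup \cdots \sqcup S^2}_{k-j} \longrightarrow S^2
\]
be the disjoint union of $z \mapsto z^j$ (with its two $\zj$-branch points at $0$ and $\infty$) and $k-j$ copies of the identity; this is a simple degree-$k$ branched covering with $d_l(f_j) = 2\delta_{jl}$, so $\Phi[f_j] = 2 e_j$. In the oriented case these $k-1$ images are linearly independent in $\Z^{k-1}$, and since $\Cob^1_{SO}(2,k) \cong \Z^{k-1}$ is torsion-free of rank $k-1$, a rank argument forces $\Phi$ to be injective.

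The main obstacle is the unoriented case, where all the $[f_j]$ vanish modulo $2$ and must be replaced by new generators. Applying the sign homomorphism $S_k \to \Z_2$ to the product $\prod_i \rho(\gamma_i)$ of monodromies around the branch points (which always lies in the commutator subgroup of $\pi_1(M)$, since $M$ is a closed surface) yields the universal parity relation
\[
\sum_{j \text{ even}} d_j \equiv 0 \pmod 2,
\]
so the image of $\Phi \bmod 2$ lies in a codimension-$1$ subgroup of $\Z_2^{k-1}$ of size $2^{k-2}$. To produce a basis of this subgroup I would, for odd $j$ with $3 \leq j \leq k$, take a $k$-fold simple branched covering of $\R P^2$ with a single $\zj$-singularity (which exists exactly because a $j$-cycle admits a square root in $S_k$ when $j$ is odd, matching the fact that a small loop around a puncture of $\R P^2$ equals twice the core generator of $\pi_1(\R P^2 \setminus \{\mathrm{pt}\}) = \Z$); and for even $j$ with $4 \leq j \leq k$, a covering combining a $\zj$-singularity with a $\mathbf{z}^2$-singularity chosen to make the total monodromy realisable and to satisfy the parity constraint. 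The resulting $k-2$ classes have linearly independent mod-$2$ $\Phi$-values, so together with $\Cob^1_O(2,k) \cong \Z_2^{k-2}$ the mod-$2$ reduction of $\Phi$ is an isomorphism onto the parity-constrained subgroup, and the $d_j$'s are a complete set of invariants as claimed.
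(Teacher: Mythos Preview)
There are two genuine gaps.

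\textbf{Oriented case.} Your claim that ``each $0$-dimensional $\widetilde{V}_j$ inside an oriented surface carries the canonical positive orientation'' is false in this setup. In the definition of an oriented simple branched covering the orientations of the singular submanifolds $\widetilde{V}_j$ (and $V_j$) are \emph{independent} pieces of data; only after they are fixed do the conditions on $\tilde{e}_j$ and $e_j$ determine the bundle orientations (see Remark~3 after the definition). In dimension~$2$ this means each branch point carries a freely chosen sign. Concretely, the map $\boldsymbol{\bar{\mathsf{z}}^2}\: S^2 \to S^2$ with one branch point labelled $+$ and the other $-$ is null-cobordant (its signed count is $0$), yet your unsigned invariant gives $d_2 = 2 \neq 0$. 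So $d_j(f) = |\widetilde{V}_j|$ is not even well-defined on cobordism classes. Your boundary argument actually proves the correct statement: the \emph{signed} count $c_j(f)$ is invariant. With that correction the rank argument does go through, since the $[f_j]$ have $\Q$-linearly independent images in $\Z^{k-1}$ and the domain is free of rank $k-1$.

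\textbf{Unoriented case.} The subscript $O$ in $\Cob^1_O$ refers to allowing the \emph{singular submanifolds} (equivalently the normal bundles $\tilde{\xi}_j$, $\xi_j$) to be unoriented; the ambient manifolds $\wtilde{M}$ and $M$ remain oriented throughout (Remark~6 explicitly excludes the other option). Hence $\R P^2$ is not an admissible target, and your proposed generators with a single $\zj$-singularity over $\R P^2$ do not exist in this category. In fact Proposition~\ref{prop:non-ex} shows there is no branched covering over $S^2$ with a single singular point, so producing the odd-$j$ generators requires a target of positive genus; the paper builds them over the torus $\Sigma_1$.

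For comparison, the paper avoids constructing generators in order to prove injectivity. It identifies the signed-count map $c$ with the map $\beta$ in the long exact sequence of the pair $(B^1(k),B^0(k))$, and the injectivity of $\beta$ (equivalently, the vanishing of $\alpha$) was already established in the computation of $H_2(B^1(k))$. The explicit generators are then constructed afterwards, over $\Sigma_1$, only to exhibit minimal representatives.
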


These results are based on the construction of a universal $k$-fold simple branched covering $p^1(k) \: E^1(k) \rightarrow B^1(k)$ (see Definitions \ref{def:univ1}--\ref{def:univ2}). Universality means that every branched covering can be induced from $p^1(k)$ (in a sense analogous to the inducing of fibre bundles) by a homotopically unique map, see Theorems \ref{thm:every} and \ref{thm:uniq}. Thus the study of the classifying space $B^1(k)$ yields information about branched coverings. In particular the bordism groups of $B^1(k)$ are isomorphic to the groups $\Cob^1(n,k)$ (see Theorem \ref{thm:cob-isom}). Also, we can deduce from the simple connectivity of $B^1(k)$ that every branched covering between $2$-dimensional manifolds is cobordant to a branched covering over the sphere (see Theorem \ref{thm:sphere}). 

In addition, we explicitly construct a set of $2$-dimensional branched coverings that represent a basis of of $\Cob^1(2,k)$. These representatives are minimal in the sense that they have the minimum number of singular points and their target has minimal genus (see Theorem \ref{thm:repr} and Proposition \ref{prop:non-ex}).

\subsection*{Background}

Branched coverings have been studied for a long time. There are various definitions, and, as a result of this, many different classes of maps are called branched coverings. The common feature is that they are always codimension-$0$ maps, and the set of singular points is a codimension-$2$ subcomplex.

Branched coverings naturally appear in complex analysis, in particular every holomorphic map between Riemann-surfaces is a branched covering. Another example is the quotient map ${\C}P^2 \rightarrow S^4$ of the action of $\Z_2$ on ${\C}P^2$ by conjugation. If the smooth structure of the quotient space (homeomorphic to $S^4$) is chosen appropriately, this map is a branched covering with singular submanifold ${\R}P^2$ (see Hambleton and Hausmann \cite{hambleton-hausmann11}).

Alexander \cite{alexander20} proved that every PL manifold is the source of a branched covering over a sphere. Because of this, branched coverings play an important role in the study of manifolds, especially in $3$ and $4$ dimensions.

The idea of studying a given class of maps via a universal such map is a classical one, see eg.\ the universal fibre bundles, or the universal embedding $BO_k \rightarrow MO_k$ defined by Thom. Classifying spaces have been constructed for many classes of singular maps. Rim\'anyi and Sz\H{u}cs \cite{rimanyi-szucs98} described a general construction which produces a classifying space for maps having fixed non-negative codimension and singularities only of allowed types, for any prescribed set of allowed stable singularity types. (Note that the singularity of $\zj$ is not stable, so branched coverings do not form such a class.) This construction goes by gluing certain pieces, each corresponding to a singularity type, together. Our construction follows the same pattern, and we prove that we get a universal branched covering this way. 

Classifying spaces for certain other classes of branched coverings have been constructed by Brand and Brumfield \cite{brand-brumfield80}, Brand \cite{brand80}, Hilden and Little \cite{hilden-little80} and Brand and Tejada \cite{brand-tejada97}. They obtained results about branched coverings over spheres by calculating homotopy groups of these classifying spaces.

There are few results about the cobordisms of branched coverings. Stong \cite{stong83-1}, \cite{stong83-2} studied the image of the homomorphism $\Cob^1(n,k) \rightarrow \Omega^{SO}_n$, $[f] \mapsto [\wtilde{M}] - k \cdot [M]$, where $f \: \wtilde{M} \rightarrow M$ is a $k$-fold branched covering. (It is well-known that if $f$ is a $k$-fold covering, then $[\wtilde{M}] = k \cdot [M]$, therefore this map can be viewed as a homomorphism $\Cob^1(n,k) / \Image \Cob^0(n,k) \rightarrow \Omega^{SO}_n$, where $\Cob^0(n,k)$ is the cobordism group of coverings, and its image in $\Cob^1(n,k)$ is the subgroup of cobordism classes representable by coverings.) Stong's results are based on calculations of Brand \cite{brand79}. These calculations show that the cobordism class of the source manifold $\wtilde{M}$ is determined by that of $M$ and the singular submanifolds $\widetilde{V}_j$ together with their normal bundles in $\wtilde{M}$ (given by inducing maps $a_j \: \widetilde{V}_j \rightarrow BSO_2$ or $BO_2$). Our Theorem \ref{thm:main-mod}.\ extends this result by showing that the same information (rationally) determines the cobordism class of the simple branched covering $f$.

\paragraph*{Acknowledgements.}
I would like to thank Andr\'as Sz\H{u}cs for suggesting this topic to me and teaching me about classifying spaces of singular maps. I am also grateful to Diarmuid Crowley for his help with the unoriented case.

\section{Definitions}

All manifolds and all maps between them will be assumed to be smooth.

\subsection{Branched coverings}

\begin{defin}
Let $\mu^j \: SO_2 \rightarrow SO_2$ be the map $\mu^j(A)=A^j$. This is a homomorphism, because $SO_2$ is commutative.
\end{defin}

\begin{defin}
If $\xi$ is a locally trivial bundle, then let $E\xi$, $B\xi$ and $\pi_{\xi} \: E\xi \rightarrow B\xi$ denote its total space, base space and projection, respectively. If $\xi$ has fibre $D^2$, then let $S\xi \subset E\xi$ denote its sphere bundle.
\end{defin}

To every locally trivial bundle $\xi$ with structure group $SO_2$ there corresponds another bundle $\mu^j_*(\xi)$, which is obtained by replacing the transition maps in $\xi$ with their compositions with $\mu^j$. If $\xi$ is an (oriented) $D^2$-bundle, then it can also be viewed as a complex line bundle, and $\mu^j_*(\xi) \cong \xi^j$, where $\xi^j = \xi \otimes \xi \otimes \ldots \otimes \xi$ is the $j^{\text{th}}$ tensor power over $\C$ of $\xi$.

\begin{defin}
Let $\zj \: D^2 \rightarrow D^2$ denote the complex $j^\text{th}$ power function (restricted to the unit disk), ie.\ $\zj(z) = z^j$.
\end{defin}

We have $\zj \circ A = \mu^j(A) \circ \zj$ for every $A \in SO_2$ (see Proposition \ref{prop:zj}). It follows that there is a well-defined map $\zjx \: E\xi \rightarrow E\mu^j_*(\xi)$ that restricts to $\zj$ in each fibre. (Using the $\mu^j_*(\xi) \cong \xi^j$ identification $\zjx$ is the map $x \mapsto x \otimes x \otimes \ldots \otimes x$.)

\begin{defin}
A map $f \: \wtilde{M} \rightarrow M$ between $n$-dimensional closed oriented manifolds is an (oriented) \emph{$k$-fold simple branched covering} ($k \geq 2$), if for every $2 \leq j \leq k$ there exist
\begin{compactitem}
\item disjoint codimension-$2$ closed oriented submanifolds $\widetilde{V}_j \subset \wtilde{M}$ and $V_j \subset M$, 
\item oriented bundles $\tilde{\xi}_j$ and $\xi_j$ over $\widetilde{V}_j$ and $V_j$ respectively, with fibre $D^2$ and structure group $SO_2$, 
\item an isomorphism $I_j \: E\mu^j_*(\tilde{\xi}_j) \rightarrow E\xi_j$, 
\item orientation-preserving embeddings $\tilde{e}_j \: E\tilde{\xi}_j \rightarrow \wtilde{M}$ and $e_j \: E\xi_j \rightarrow M$, which are identical on $\widetilde{V}_j$ and $V_j$ (which are identified with the zero-sections of the bundles), 
\end{compactitem}
such that
\begin{compactenum}[(B1)]
\item \label{b1} $f$ has degree $k$ over each connected component of $M$, 
\item \label{b2} $f$ is a local diffeomorphism at each point of $\wtilde{M} \setminus \bigl( \bigsqcup_{j=2}^k \widetilde{V}_j \bigr)$, 
\item \label{b3} $f$ is orientation-preserving at each of its regular points, 
\item \label{b4} if $i \neq j$, then $e_i(E\xi_i)$ and $e_j(E\xi_j)$ are disjoint, 
\item \label{b5} the following diagram is commutative for every $2 \leq j \leq k$: 
\[
\xymatrix{
E\tilde{\xi}_j \ar[r]^{\tilde{e}_j} \ar[d]_{I_j \circ \zjx} & \wtilde{M} \ar[d]^f \\
E\xi_j \ar[r]^{e_j} & M
}
\]
\end{compactenum}
\end{defin}

\begin{rems}
1. It follows from the listed properties of $f$ that  
\begin{compactenum}[(B1)]
\setcounter{enumi}{5}
\item \label{b6} $f \big| _{\widetilde{V}_j} \: \widetilde{V}_j \rightarrow V_j$ is a diffeomorphism, and this is the underlying diffeomorphism of the isomorphism $I_j$, 
\item \label{b7} $f \big| _{\wtilde{M} \setminus f^{-1}(V)} \: \wtilde{M} \setminus f^{-1}(V) \rightarrow M \setminus V$ is a $k$-fold covering (where $V = \bigsqcup_{j=2}^k V_j$), 
\item \label{b8} $f \big| _{f^{-1}(e_j(E\xi_j)) \setminus \tilde{e}_j(E\tilde{\xi}_j)} \: f^{-1}(e_j(E\xi_j)) \setminus \tilde{e}_j(E\tilde{\xi}_j) \rightarrow e_j(E\xi_j)$ is a $(k-j)$-fold covering. 
\end{compactenum}

2. The branched covering $f$ determines the ``singular submanifolds" $\widetilde{V}_j$ and $V_j$. The subset $V_j \subset M$ contains the points whose inverse image consists of $k-j+1$ points, and $\widetilde{V}_j \subseteq f^{-1}(V_j)$ is the subset of points where $f$ is not a local diffeomorphism. 

3. The orientations of $\wtilde{M}$, $M$, $\widetilde{V}_j$ and $V_j$ are part of the data that determines a branched covering, but they will be omitted from the notation. These orientations, together with the condition that $\tilde{e}_j$ and $e_j$ are orientation-preserving, determine the orientations of the bundles $\tilde{\xi}_j$ and $\xi_j$. 

4. The definition can be extended to compact manifolds with boundary. In this case we require in addition that $f \big| _{\partial \wtilde{M}} \: \partial \wtilde{M} \rightarrow \partial M$ is a branched covering between closed manifolds, the restriction of $f$ to a product neighbourhood $\partial \wtilde{M} \times I$ of the boundary is a product map $f \big| _{\partial \wtilde{M}} \times \id_I \: \partial \wtilde{M} \times I \rightarrow \partial M \times I$, and $f^{-1}(\partial M \times I) = \partial \wtilde{M} \times I$. 

5. This definition, though formally stronger, is equivalent to the one given in the Introduction. The equivalence is proved in \cite{thesis-bc}, we will not use it here. There we also consider the more general notion of branched covering of type $m$, which is a branched covering $f \: \wtilde{M} \rightarrow M$ such that each point of $M$ has at most $m$ singular preimages. The study of these is based on the construction of universal $k$-fold type-$m$ branched coverings $p^m(k) \: E^m(k) \rightarrow B^m(k)$. In this paper we only consider the case $m=1$, so from now on ``branched covering" will mean ``simple branched covering". 

6. There are two ways to relax the orientation conditions. First we may allow $\wtilde{M}$, $M$, $\widetilde{V}$ and $V$ to be unoriented manifolds (and then we cannot require $f$, $\tilde{e}_j$ and $e_j$ to be orientation-preserving). Second we may allow the bundles $\tilde{\xi}_j$ and $\xi_j$ and the submanifolds $\widetilde{V}$ and $V$ to be unoriented (so we cannot require $\tilde{e}_j$ and $e_j$ to be orientation-preserving). We will not consider the first option in this paper. The changes necessary in the second case are described below. 
\end{rems}

\subsection{The unoriented case}

In order to extend the definition to the case of unoriented bundles $\tilde{\xi}_j$ and $\xi_j$, we need to define $\mu^j_*$ and $\zjx$ for unoriented bundles. This is possible by the following two propositions.

\begin{defin}
Let $\tau = \bigl( \begin{smallmatrix} 1 & 0 \\ 0 & -1 \end{smallmatrix} \bigr) \in O_2$.
\end{defin}

\begin{prop}
There is a unique homomorphism $\mu^j \: O_2 \rightarrow O_2$ such that $\mu^j(A) = A^j$ if $A \in SO_2$ and $\mu^j(\tau) = \tau$.
\end{prop}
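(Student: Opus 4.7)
The plan is to exploit the decomposition $O_2 = SO_2 \sqcup \tau \cdot SO_2$. For uniqueness, observe that since $O_2$ is generated by $SO_2$ together with $\tau$, any homomorphism $\varphi \: O_2 \to O_2$ is determined by $\varphi|_{SO_2}$ and $\varphi(\tau)$; in particular the value $\mu^j(\tau A) = \mu^j(\tau)\mu^j(A) = \tau A^j$ is forced for every $A \in SO_2$, so any two homomorphisms satisfying the hypothesis must agree on all of $O_2$.

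For existence, I would define $\mu^j$ on $O_2$ piecewise by the formulas $\mu^j(A) = A^j$ for $A \in SO_2$ and $\mu^j(\tau A) = \tau A^j$ for $A \in SO_2$, and then verify directly that it respects multiplication. The key ingredient is the conjugation identity $\tau A \tau = A^{-1}$ for every $A \in SO_2$ (together with $\tau^2 = I$), which expresses $O_2$ as a semidirect product $SO_2 \rtimes \langle \tau \rangle$. Using this I would check the four cases according to which cosets the factors lie in; the least trivial is $\mu^j((\tau A)(\tau B)) = \mu^j(A^{-1}B) = A^{-j}B^j$ on the one hand, and $\mu^j(\tau A)\mu^j(\tau B) = (\tau A^j)(\tau B^j) = A^{-j}B^j$ on the other. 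The remaining three cases are even more straightforward, using only that $SO_2$ is abelian so that $(A_1 A_2)^j = A_1^j A_2^j$.

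The only conceptual point — which is what makes the whole thing work — is that $\mu^j|_{SO_2}$ is equivariant with respect to the conjugation action of $\tau$ on $SO_2$: inversion commutes with the $j$-th power map. Granted this, the piecewise definition automatically assembles into a homomorphism from the semidirect product. I do not expect any serious obstacle; the proposition is essentially a presentation-of-$O_2$ argument, and the verification is a short case-by-case calculation that fits in a few lines.
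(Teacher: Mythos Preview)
Your proposal is correct and follows essentially the same approach as the paper: both use the semidirect product decomposition $O_2 = SO_2 \rtimes \langle \tau \rangle$ with relation $\tau A \tau = A^{-1}$, and both reduce existence to the observation that the $j$-th power map on $SO_2$ commutes with inversion. The paper phrases existence as a single check that the defining relation is preserved, whereas you unpack this into the four coset cases, but the content is identical.
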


\begin{proof}
The group $O_2$ is the semidirect product of $SO_2$ and $\Z_2$ (generated by $\tau$), therefore there is at most one such homomorphism. The relation in the semidirect product is given by $\tau A \tau = A^{-1}$ for every $A \in SO_2$. Since $\mu^j(\tau) \mu^j(A) \mu^j(\tau) = \tau A^j \tau = (A^j)^{-1} = (A^{-1})^j = \mu^j(A^{-1})$, the given conditions really determine a homomorphism $\mu^j$.
\end{proof}

\begin{prop} \label{prop:zj}
We have $\zj \circ A = \mu^j(A) \circ \zj$ for every $A \in O_2$.
\end{prop}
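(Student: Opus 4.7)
The plan is to verify the identity on a generating set of $O_2$ and then bootstrap to the whole group using multiplicativity. Since $O_2 = SO_2 \sqcup \tau \cdot SO_2$, every element is either in $SO_2$ or of the form $\tau A$ with $A \in SO_2$, so it suffices to check the identity for $A \in SO_2$ and for $A = \tau$, provided we first verify that the class of $A$'s satisfying $\zj \circ A = \mu^j(A) \circ \zj$ is closed under multiplication.

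For closure, if $A$ and $B$ both satisfy the intertwining relation, then
\[
\zj \circ (AB) = (\zj \circ A) \circ B = \mu^j(A) \circ \zj \circ B = \mu^j(A) \circ \mu^j(B) \circ \zj = \mu^j(AB) \circ \zj,
\]
where the last equality uses that $\mu^j$ is a homomorphism (the previous proposition). So it is enough to check the two cases.

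For $A \in SO_2$, identify $A$ with multiplication by $e^{i\theta} \in S^1 \subset \C$; then for $z \in D^2$,
\[
\zj(Az) = (e^{i\theta} z)^j = e^{ij\theta} z^j = \mu^j(A)\, \zj(z),
\]
since $\mu^j(A) = A^j$ corresponds to multiplication by $e^{ij\theta}$. For $A = \tau$, which in complex coordinates is complex conjugation $z \mapsto \bar z$, compute
\[
\zj(\tau z) = \zj(\bar z) = \bar z^{\,j} = \overline{z^j} = \tau\, \zj(z) = \mu^j(\tau)\, \zj(z),
\]
using $\mu^j(\tau) = \tau$. Combining the two cases with the multiplicativity observation gives the proposition.

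There is essentially no obstacle here: the content is just that the complex power function is $S^1$-equivariant (with multiplier raised to the $j$-th power) and commutes with conjugation, both of which are immediate from the formula $z \mapsto z^j$ in $\C$. The only thing that required any setup was the definition of $\mu^j$ on $O_2$, which is supplied by the preceding proposition.
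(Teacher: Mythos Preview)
Your proof is correct and essentially the same as the paper's: both verify the identity on $SO_2$ via $(az)^j = a^j z^j$ and on $\tau$ via $\overline{z}^{\,j} = \overline{z^j}$, then use the homomorphism property of $\mu^j$ to cover the remaining coset. The only cosmetic difference is that you isolate the multiplicativity step as a separate lemma before checking generators, whereas the paper computes $\zj \circ (B\tau)$ directly and invokes $\mu^j(B\tau) = \mu^j(B)\tau$ at the end.
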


\begin{proof}
If $A \in SO_2$, then $A$ corresponds to multiplication by a complex number $a$, so $\zj \circ A(z) = (az)^j = a^j z^j = \mu^j(A) \circ \zj(z)$. If $A \in O_2 \setminus SO_2$, then $A = B \tau$ for some $B \in SO_2$. This $B$ corresponds to multiplication by a complex number $b$ and $\tau$ corresponds to complex conjugation, so $\zj \circ A(z) = (b\bar{z})^j = b^j \overline{z^j} = \mu^j(B) \circ \tau \circ \zj(z) = \mu^j(A) \circ \zj(z)$.
\end{proof}

Most of the following constructions and statements work in the same way in the oriented and in the unoriented case. We will only specify which case we are in when there is a difference.

\subsection{Cobordism}

\begin{defin}
Two $n$-dimensional, $k$-fold branched coverings $f_1 \: \wtilde{M}_{{\!}1} \rightarrow M_1$ and $f_2 \: \wtilde{M}_{{\!}2} \rightarrow M_2$ are \emph{isomorphic} if there exist orientation-preserving diffeomorphisms $\tilde{g} \: \wtilde{M}_{{\!}1} \rightarrow \wtilde{M}_{{\!}2}$ and $g \: M_1 \rightarrow M_2$ such that $g \circ f_1 = f_2 \circ \tilde{g}$, and $\tilde{g}$ and $g$ preserve the orientations of the singular submanifolds.
\end{defin}

\begin{defin} \label{def:cob}
Two $n$-dimensional, $k$-fold branched coverings $f_1 \: \wtilde{M}_{{\!}1} \rightarrow M_1$ and $f_2 \: \wtilde{M}_{{\!}2} \rightarrow M_2$ are \emph{cobordant} if there exist oriented cobordisms $\widetilde{W}$ between $\wtilde{M}_{{\!}1}$ and $\wtilde{M}_{{\!}2}$, and $W$ between $M_1$ and $M_2$, and an $(n+1)$-dimensional, $k$-fold branched covering $h \: \widetilde{W} \rightarrow W$ such that $h | _{\wtilde{M}_{{\!}1}} = f_1$, $h | _{\wtilde{M}_{{\!}2}} = f_2$, and the singular submanifolds of $h$ are oriented cobordisms between the singular submanifolds of $f_1$ and $f_2$. 
\end{defin}

Isomorphism and cobordism are equivalence relations.

\begin{defin}
The set of cobordism classes of $n$-dimensional, $k$-fold branched coverings is denoted by $\Cob^1(n,k)$. (This is $\Cob^1_{SO}(n,k)$ in the oriented case, and $\Cob^1_O(n,k)$ in the unoriented case.) The cobordism class of $f$ is denoted by $[f]$.
\end{defin}

The disjoint union of $n$-dimensional, $k$-fold branched coverings $f_1 \: \wtilde{M}_{{\!}1} \rightarrow M_1$ and $f_2 \: \wtilde{M}_{{\!}2} \rightarrow M_2$ is $f_1 \sqcup f_2 \: \wtilde{M}_{{\!}1} \bigsqcup \wtilde{M}_{{\!}2} \rightarrow M_1 \bigsqcup M_2$. This induces an operation $\sqcup$ on the set of cobordism classes, $[f_1] \sqcup [f_2] = [f_1 \sqcup f_2]$. This turns $\Cob^1(n,k)$ into an abelian group. The neutral element is the cobordism class of the empty map, and the inverse of $f \: \wtilde{M} \rightarrow M$ is $(-f) \: (-\wtilde{M}) \rightarrow (-M)$ (the same map between the same manifolds, but with opposite orientations; the orientations of $\widetilde{V}_j$ and $V_j$ are reversed too).

\begin{defin}
Let $\Omega^{SO}_n$ denote the $n$-dimensional oriented cobordism group, and $\Omega^{SO}_n(X)$ denote the $n$-dimensional oriented bordism group of the space $X$. 
\end{defin}

\begin{defin} \label{def:twisted}
Let $\xi$ be a vector bundle over a space $X$. The \emph{twisted oriented bordism group} $\Omega^{\xi}_n(X)$ is defined as follows: An element is represented by a map $u \: M \rightarrow X$, where $M$ is a closed $n$-dimensional manifold and $TM \oplus u^*(\xi)$ is oriented. Two representatives $u_1 \: M_1 \rightarrow X$ and $u_2 \: M_2 \rightarrow X$ are equivalent if there is a bordism $v \: W \rightarrow X$ between $u_1$ and $u_2$ and an orientation of $TW \oplus v^*(\xi)$ that induces the given orientations of $TM_1 \oplus u_1^*(\xi)$ and $TM_2 \oplus u_2^*(\xi)$ over the boundary. The group operation is disjoint union.
\end{defin}

\begin{defin}
Let $\Omega^{SO}_*$ denote $\bigoplus_{n=0}^{\infty} \Omega^{SO}_n$, this is a graded ring. Similarly let $\Omega^{SO}_*(X) = \bigoplus_{n=0}^{\infty} \Omega^{SO}_n(X)$ and $\Omega^{\xi}_*(X) = \bigoplus_{n=0}^{\infty} \Omega^{\xi}_n(X)$, these are graded $\Omega^{SO}_*$-modules: If $[M,u] \in \Omega^{SO}_*(X)$ or $\Omega^{\xi}_*(X)$ and $[N] \in \Omega^{SO}_*$, then $[N] \cdot [M,u] = [M \times N, u \circ p_1]$, where $p_1 \: M \times N \rightarrow M$ is the projection. 
\end{defin}

\begin{defin} \label{def:module}
Let $\Cob^1(*,k) = \bigoplus_{n=0}^{\infty} \Cob^1(n,k)$. It is a graded $\Omega^{SO}_*$-module: for a branched covering $f \: \wtilde{M} \rightarrow M$ and $[N] \in \Omega^{SO}_*$ let $[N] \cdot [f] = [f \times \id_N]$, the cobordism class of $f \times \id_N \: \wtilde{M} \times N \rightarrow M \times N$.
\end{defin}

\section{The universal $k$-fold branched covering}

We will define a map $p^1(k) \: E^1(k) \rightarrow B^1(k)$ which is a universal $k$-fold branched covering in the sense that every $k$-fold branched covering can be induced from it by a homotopically unique map (see Theorems \ref{thm:every} and \ref{thm:uniq}). Note that $p^1(k)$ itself is not a branched covering, because $E^1(k)$ and $B^1(k)$ are not manifolds.

\subsection{Construction}

\begin{defin}
Let $S_k$ denote the symmetric group on $k$ elements. 
\end{defin}

\begin{defin}
Let $p^0(k) \: E^0(k) \rightarrow B^0(k) = K(S_k, 1)$ be the universal $k$-fold covering. (In particular, $E^0(1) = B^0(1) = B^0(0) = *$ and $E^0(0) = \emptyset$.)
\end{defin}

\begin{defin} \label{def:gamma}
Let $\gamma_{SO}$ denote the universal bundle $\pi_{\gamma_{SO}} \: E\gamma_{SO} \rightarrow B\gamma_{SO} = BSO_2$ with fibre $D^2$ and structure group $SO_2$. Analogously, let $\gamma_O$ denote the universal bundle $\pi_{\gamma_O} \: E\gamma_O \rightarrow B\gamma_O = BO_2$ with fibre $D^2$ and structure group $O_2$.
\end{defin}

Now we define the universal $k$-fold branched coverings $p^1_{SO}(k) \: E^1_{SO}(k) \rightarrow B^1_{SO}(k)$ (in the oriented case) and $p^1_O(k) \: E^1_O(k) \rightarrow B^1_O(k)$ (in the unoriented case). The notation $\gamma$, $p^1(k)$, $E^1(k)$ and $B^1(k)$ will mean $\gamma_{SO}$, $p^1_{SO}(k)$, $E^1_{SO}(k)$ and $B^1_{SO}(k)$ in the oriented case, and $\gamma_O$, $p^1_O(k)$, $E^1_O(k)$ and $B^1_O(k)$ in the unoriented case.

\begin{defin} \label{def:univ1}
Let 
\[
\begin{aligned}
& & E^1(k) &= E^0(k) \!\!\!\! &&\bigcup_{\tilde{r}_2} \left( E\gamma \times B^0(k-2) \bigsqcup E\mu^2_*(\gamma) \times E^0(k-2) \right) \\
& & & & &\bigcup_{\tilde{r}_3} \left( E\gamma \times B^0(k-3) \bigsqcup E\mu^3_*(\gamma) \times E^0(k-3) \right) \\
& & & & &\bigcup \ldots \\
& & & & &\bigcup_{\tilde{r}_k} \left( E\gamma \times B^0(0) \bigsqcup E\mu^k_*(\gamma) \times E^0(0) \right) \\
\\[-10pt]
\text{and} \quad \quad & & B^1(k) &= B^0(k) \!\!\!\! &&\bigcup_{r_2} E\mu^2_*(\gamma) \times B^0(k-2) \\
& & & & &\bigcup_{r_3} E\mu^3_*(\gamma) \times B^0(k-3) \\
& & & & &\bigcup \ldots \\
& & & & &\bigcup_{r_k} E\mu^k_*(\gamma) \times B^0(0) \text{\,,}
\end{aligned}
\]
where the gluing maps $\tilde{r}_j$ and $r_j$ are defined as follows: 

The map $\zjx \: E\gamma \rightarrow E\mu^j_*(\gamma)$ can be restricted to the sphere bundle $S\gamma$. Using this restriction we obtain a map 
\[
\zjx \times \id \bigsqcup \id \times p^0(k-j) \: S\gamma \times B^0(k-j) \bigsqcup S\mu^j_*(\gamma) \times E^0(k-j) \rightarrow S\mu^j_*(\gamma) \times B^0(k-j)
\] 
which is a $k$-fold covering. Let 
\[
\begin{aligned}
& & r_j &\: S\mu^j_*(\gamma) \times B^0(k-j) \rightarrow B^0(k) \\
\text{and} \quad \quad  & & \tilde{r}_j &\: S\gamma \times B^0(k-j) \bigsqcup S\mu^j_*(\gamma) \times E^0(k-j) \rightarrow E^0(k)
\end{aligned}
\] 
be the maps that induce this covering: 
\[
\xymatrix{
S\gamma \times B^0(k-j) \bigsqcup S\mu^j_*(\gamma) \times E^0(k-j) \ar[r]^-{\tilde{r}_j} \ar[d]_{\zjx \times \id \bigsqcup \id \times p^0(k-j)} & E^0(k) \ar[d]^{p^0(k)} \\
S\mu^j_*(\gamma) \times B^0(k-j) \ar[r]^-{r_j} & B^0(k)
}
\]
\end{defin}

The space $B^1(k)$ is called the classifying space of branched coverings.

\begin{defin} \label{def:univ2}
The \emph{universal $k$-fold branched covering} 
\[
p^1(k) \: E^1(k) \rightarrow B^1(k)
\]
is the union of the maps 
\[
\begin{aligned}
& & p^0(k) &\: E^0(k) \rightarrow B^0(k) \\
\text{and} \quad & & \zjx \times \id \bigsqcup \id \times p^0(k-j) &\: E\gamma \times B^0(k-j) \bigsqcup E\mu^j_*(\gamma) \times E^0(k-j) \\
& & & \quad \quad \quad \quad \quad \quad \quad \quad \quad \quad \quad  \rightarrow E\mu^j_*(\gamma) \times B^0(k-j) \text{\,.}
\end{aligned}
\] 
\end{defin}

The definition of the gluing maps ensures that this is a well-defined continuous map.

\subsection{Inducing branched coverings}

\begin{defin}
Let $M$ be a compact manifold and $u \: M \rightarrow B^1(k)$ be a continuous map. The map $u$ is called \emph{generic} if it is transverse to $B\mu^j_*(\gamma) \times B^0(k-j) \subset E\mu^j_*(\gamma) \times B^0(k-j)$ for every $2 \leq j \leq k$. (If $M$ has boundary, then we also require that $u \big| _{\partial M}$ is transverse to $B\mu^j_*(\gamma) \times B^0(k-j)$.)
\end{defin}

Here $B\mu^j_*(\gamma) = B\gamma$ is identified with the zero-section of $\mu^j_*(\gamma)$. It is $G_2(\R^{\infty})$, the infinite Grassmann manifold whose points are the (oriented/unoriented) planes in $\R^{\infty}$. Since $M$ is compact, the image of $u$ in $E\mu^j_*(\gamma) \times B^0(k-j)$ is in fact contained in $E\mu^j_*(\gamma) \big| _{G_2(\R^N)} \times B^0(k-j)$ for some finite $N$, where $\mu^j_*(\gamma) \big| _{G_2(\R^N)}$ is the restriction of $\mu^j_*(\gamma)$ to the finite dimensional Grassmann manifold $G_2(\R^N)$. We say that $u$ is transverse to $B\mu^j_*(\gamma) \times B^0(k-j)$ if the composition $M \rightarrow E\mu^j_*(\gamma) \big| _{G_2(\R^N)} \times B^0(k-j) \rightarrow E\mu^j_*(\gamma) \big| _{G_2(\R^N)}$ (defined on a neighbourhood of $u^{-1}(B\mu^j_*(\gamma) \times B^0(k-j)) \subseteq M$) of $u$ and the projection is smooth, and is transverse to the submanifold $G_2(\R^N) = B\mu^j_*(\gamma) \big| _{G_2(\R^N)} \subset E\mu^j_*(\gamma) \big| _{G_2(\R^N)}$.

By smooth approximation and the Thom transversality theorem (see Hirsch \cite{hirsch94}) the set of generic maps is dense in the set of all continuous maps $M \rightarrow B^1(k)$.

\begin{defin} \label{def:ind}
Let $M$ be a compact manifold and $u \: M \rightarrow B^1(k)$ be a generic map. The $k$-fold branched covering \emph{induced by $u$} is the map $f \: \wtilde{M} \rightarrow M$, where $\wtilde{M} = \{ (x,y) \in M \times E^1(k) \mid x \in M,\, y \in (p^1(k))^{-1}(u(x)) \}$ and $f(x,y)=x$. 

In other words, $f$ is the result of the standard construction of the following pullback diagram: 
\[
\xymatrix{
\wtilde{M} \ar@{-->}[r] \ar@{-->}[d]_{f} & E^1(k) \ar[d]^{p^1(k)} \\
M \ar[r]^-{u} & B^1(k)
}
\]
\end{defin}

\begin{prop} \label{prop:ind}
The definition makes sense, ie.\ $\wtilde{M}$ is a manifold and $f$ is a $k$-fold branched covering.
\end{prop}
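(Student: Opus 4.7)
The plan is to build the branched covering data prescribed by the definition directly from the generic map $u$, using the stratified structure of $B^1(k)$ as a union of $B^0(k)$ and the attached cells $E\mu^j_*(\gamma) \times B^0(k-j)$. First I would define the singular submanifolds by $V_j := u^{-1}(B\mu^j_*(\gamma) \times B^0(k-j))$; by genericity these are codimension-$2$ submanifolds, and they are pairwise disjoint because the zero sections lie in the interiors of disjoint attached cells. The normal bundle $\xi_j$ of $V_j \subset M$ is $u^*(\mu^j_*(\gamma))$. Define $\tilde\xi_j$ as the pullback of $\gamma$ along $V_j \to B\mu^j_*(\gamma) = B\gamma$; then the identification $E\mu^j_*(\tilde\xi_j) \cong E\xi_j$ comes for free, since $\mu^j_*$ commutes with base-preserving pullback. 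Choose disjoint tubular neighborhoods $N_j \cong E\xi_j$ arranged so that $u|_{N_j}$ is a bundle map into $E\mu^j_*(\gamma) \times B^0(k-j)$.

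Next I would compute the pullback $\wtilde{M}$ locally. Over $N_j$ the pullback splits into two parts matching the two summands of $p^1(k)$: the pullback of $\zjx \times \id \: E\gamma \times B^0(k-j) \to E\mu^j_*(\gamma) \times B^0(k-j)$ produces $E\tilde\xi_j$ equipped with the fibrewise $\zjx$ map to $N_j$; the pullback of $\id \times p^0(k-j)$ produces a trivial $(k-j)$-fold covering of $N_j$. Over $M \setminus \bigsqcup_j V_j$ the map $p^1(k)$ is a genuine $k$-fold covering (combining $p^0(k)$ over $B^0(k)$ with the non-zero part of $\zjx$, which is a $j$-fold covering away from the zero section, plus $k-j$ trivial sheets), so the standard pullback yields a smooth $k$-fold covering there. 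These local models glue consistently, since the attaching maps $\tilde r_j$ were designed precisely so that on sphere bundles the $j$ sheets of $\zjx$ plus the $k-j$ trivial sheets match the $k$ sheets of $p^0(k)$.

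Then it remains to verify (B\ref{b1})--(B\ref{b5}). Let $\tilde e_j$ and $e_j$ be the inclusions of the tubular neighborhoods and $I_j$ the canonical bundle isomorphism $E\mu^j_*(\tilde\xi_j) \to E\xi_j$; diagram (B\ref{b5}) then commutes by construction of the local model. Conditions (B\ref{b2}) and (B\ref{b4}) are immediate from the local description and the disjointness of the $N_j$, and (B\ref{b1}) follows by counting sheets on the regular part. For (B\ref{b3}) in the oriented case, $\gamma_{SO}$ is oriented, hence so are $\tilde\xi_j$ and $\xi_j$; $V_j$ inherits a tangential orientation from $M$ and its oriented normal bundle, and the fibrewise complex-analytic orientation of $\zjx$ extends the orientation of the regular sheets across $\wtilde{V}_j$. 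The unoriented case is the same argument with the orientation clauses deleted.

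The main obstacle will be ensuring that the smooth structure on $\wtilde{M}$ is globally well-defined, particularly along the sphere bundle boundaries $\partial N_j$ where the ``singular tubular'' piece meets the ``regular covering'' piece. This reduces to unpacking Definitions \ref{def:univ1}--\ref{def:univ2} and observing that $\tilde r_j$ was defined so as to induce precisely the covering $\zjx \times \id \sqcup \id \times p^0(k-j)$ on sphere bundles; compatibility of the pullback with this identification then makes the two local models agree on the overlap, giving $\wtilde{M}$ a single smooth atlas.
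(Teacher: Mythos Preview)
Your outline follows the paper's route: define $V_j$ as the transverse preimage of the zero-section, identify the normal bundle $\xi_j$ with the pullback of $\mu^j_*(\gamma)$, set $\tilde\xi_j$ to be the pullback of $\gamma$ along the same base map, split the pullback of $p^1(k)$ over a tubular neighbourhood into a $\zjx$-piece and a trivial $(k-j)$-sheeted piece, and check (B\ref{b1})--(B\ref{b5}). That is exactly the paper's architecture.

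The gap is the clause ``arranged so that $u|_{N_j}$ is a bundle map into $E\mu^j_*(\gamma) \times B^0(k-j)$''. Genericity gives only transversality to the zero-section; it does not let you straighten $u$ into a bundle map on any tubular neighbourhood, because the base component $\pi_{\mu^j_*(\gamma)} \circ u$ typically varies along normal fibres and this cannot be absorbed into the tubular identification (consider the case where $u|_{V_j}$ is constant but $u$ is not constant on nearby fibres). Without that hypothesis your claim ``the pullback of $\zjx \times \id$ produces $E\tilde\xi_j$'' is not literally true of the pullback in Definition~\ref{def:ind}, and you have no smooth atlas on $\wtilde{M}$ near $\widetilde V_j$. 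This is precisely where the paper spends its effort: it only arranges that the \emph{fibre coordinate} of the tubular neighbourhood $U_j$ equals $w \circ u_j^1$, then writes down an explicit candidate chart $\tilde b \: F_t \times D^2_\delta \to \widetilde U_j$ by hand, checks injectivity and surjectivity directly, and computes that the transitions $(\tilde b')^{-1}\circ\tilde b$ are forced (via a square commuting with $\id\times\zj$ on both verticals) to be continuous $j$-th roots of the transitions of $\xi_j$, hence smooth and valued in $SO_2$ (resp.\ $O_2$). Your final paragraph correctly names this as ``the main obstacle'', but ``compatibility of the pullback with this identification'' is exactly the statement that needs the explicit chart argument; it does not follow formally from the definitions of $r_j$, $\tilde r_j$.
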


\begin{proof}
We will prove this for closed $M$, the proof is similar for manifolds with boundary.

First we will prove that $\wtilde{M}$ is a (smooth, oriented) manifold, in particular each of its points has a Euclidean neighbourhood. 

Let $V_j = u^{-1} \bigl( B\mu^j_*(\gamma) \times B^0(k-j) \bigr)$, $V = \bigsqcup_{j=2}^k V_j$ and $\widehat{V} = f^{-1}(V) \subset \wtilde{M}$. The subset $V \subset M$ is closed, so $\widehat{V} \subset \wtilde{M}$ is closed too. The restriction of $p^1(k)$ is a $k$-fold covering over $B^1(k) \setminus \bigl( \bigsqcup_{j=2}^k B\mu^j_*(\gamma) \times B^0(k-j) \bigr)$, so $f \big| _{\wtilde{M} \setminus \widehat{V}} \: \wtilde{M} \setminus \widehat{V} \rightarrow M \setminus V$ is a $k$-fold covering too. Therefore every point $(x,y) \in \wtilde{M} \setminus \widehat{V}$ has a neighbourhood in $\wtilde{M}$ that is homeomorphic (via the restriction of $f$) to a neighbourhood of $x$ in $M$. Hence $\wtilde{M} \setminus \widehat{V}$ is a topological manifold, and a smooth atlas of $M$ determines a smooth structure on it such that $f$ is a local diffeomorphism. Since $M$ is oriented, we get an orientation on $\wtilde{M} \setminus \widehat{V}$ such that $f$ is orientation-preserving. 

The subset $V_j \subset M$ is a closed codimension-$2$ submanifold (because $u$ is generic). Let $U_j = u^{-1} \bigl( E_{\varepsilon}\mu^j_*(\gamma) \times B^0(k-j) \bigr) \subset M$, where $0 < \varepsilon \leq 1$ and $E_{\varepsilon}\mu^j_*(\gamma)$ denotes the subset of $E\mu^j_*(\gamma)$ that consists of $D^2_{\varepsilon} = \{ z \in D^2 \mid |z| \leq \varepsilon \}$ in each fibre. If $\varepsilon$ is small enough, then this is a tubular neighbourhood of $V_j$. Let $\widehat{U}_j = f^{-1}(U_j) \subset \wtilde{M}$ and $\widetilde{U}_j = \{ (x,y) \in \wtilde{M} \mid x \in U_j, y \in E\gamma \times B^0(k-j) \} \subseteq \widehat{U}_j$. The restriction of $p^1(k)$ is a $(k-j)$-fold covering $E\mu^j_*(\gamma) \times E^0(k-j) \rightarrow E\mu^j_*(\gamma) \times B^0(k-j)$, therefore $f \big| _{\widehat{U}_j \setminus \widetilde{U}_j} \: \widehat{U}_j \setminus \widetilde{U}_j \rightarrow U_j$ is  a $(k-j)$-fold covering too. Hence each point of $\widehat{V} \cap (\widehat{U}_j \setminus \widetilde{U}_j)$ has a euclidean neighbourhood in $\wtilde{M}$, and there is a smooth structure and orientation on these that are compatible with the ones defined earlier.

Let $\widetilde{V}_j = \{ (x,y) \in \wtilde{M} \mid x \in V_j,\, y \in B\gamma \times B^0(k-j) \}$ = $f^{-1}(V_j) \cap \widetilde{U}_j$. Then $f \big| _{\widetilde{V}_j} \: \widetilde{V}_j \rightarrow V_j$ is a homeomorphism. The tubular neighbourhood $U_j$ can be identified with the disk bundle of the normal bundle of $V_j \subset M$, let $\pi_j \: U_j \rightarrow V_j$ be the projection. We define a map $\tilde{\pi}_j \: \widetilde{U}_j \rightarrow \widetilde{V}_j$ by the formula $\tilde{\pi}_j(x,y) = \bigl( f \big| _{\widetilde{V}_j} \bigr)^{-1}(\pi_j(x))$. We will prove that $\widetilde{U}_j$ is a $D^2$-bundle over $\widetilde{V}_j$ with projection $\tilde{\pi}_j$. For this we need that any $\tilde{x} \in \widetilde{V}_j$ has a neighbourhood in $\widetilde{V}_j$ (a subset $F_t$ defined in the next paragraph), whose inverse image under $\tilde{\pi}_j$ is homeomorphic to $F_t \times D^2$ and $\tilde{\pi}_j$ is the projection onto $F_t$.

Let $u_j^1 \: U_j \rightarrow E\mu^j_*(\gamma)$ and $u_j^2 \: U_j \rightarrow B^0(k-j)$ denote the composition of $u$ and the projections $E\mu^j_*(\gamma) \times B^0(k-j) \rightarrow E\mu^j_*(\gamma)$ and $E\mu^j_*(\gamma) \times B^0(k-j) \rightarrow B^0(k-j)$ respectively. Since $V_j$ is compact, $u_j^1(V_j) \subset B\gamma$ can be covered by a finite number of open sets $G_s \subset B\gamma$ ($s = 1, 2, \ldots$), such that $\gamma \big| _{G_s}$ (and hence $\mu^j_*(\gamma) \big| _{G_s}$ too) are trivial. The normal bundle $U_j$ of $V_j$ is induced by $u_j^1 \big| _{V_j} \: V_j \rightarrow B\gamma$ from $\mu^j_*(\gamma)$, so it is trivial over $(u_j^1)^{-1}(G_s) \subseteq V_j$. $V_j$ can be covered by a finite number of open sets such that the closure of each is contained in $(u_j^1)^{-1}(G_s)$ for some $s$. Let $F_t$ ($t = 1, 2, \dots$) denote these closures. If $\varepsilon$ is small enough, then $u_j^1(\pi_j^{-1}(F_t)) \subset \pi_{\gamma}^{-1}(G_s)$ for (one of) the $s$ with $F_t \subset (u_j^1)^{-1}(G_s)$. This gives us finitely many upper bounds for $\varepsilon$, so we may assume that all of them are satisfied.

Let $\tilde{x} \in \widetilde{V}_j$ be arbitrary, then $f(\tilde{x}) \in V_j$, so $f(\tilde{x}) \in \interior F_t$ for some $t$. Let $s$ be such that $u_j^1(\pi_j^{-1}(F_t)) \subset \pi_{\gamma}^{-1}(G_s)$. Fix a trivialization $a \: G_s \times D^2 \rightarrow E\gamma \big| _{G_s}$ of $\gamma \big| _{G_s}$, this determines a trivialization $a^j \: G_s \times D^2 \rightarrow E\mu^j_*(\gamma) \big| _{G_s}$ of $\mu^j_*(\gamma) \big| _{G_s}$ such that the following diagram commutes: 
\[
\xymatrix{
G_s \times D^2 \ar[r]^-{a} \ar[d]_{\id \times \zj} & E\gamma \big| _{G_s} \ar[d]^{\zjx} \\
G_s \times D^2 \ar[r]^-{a^j} & E\mu^j_*(\gamma) \big| _{G_s}
}
\]
Let $w \: E\mu^j_*(\gamma) \big| _{G_s} \rightarrow D^2$ denote the projection (after identification with $G_s \times D^2$ via $a^j$). As a bundle, $U_j$ is trivial over $F_t$, let $b \: F_t \times D^2_{\varepsilon} \rightarrow \pi_j^{-1}(F_t)$ be a local trivialization. We may assume that the projection $\pi_j^{-1}(F_t) \rightarrow D^2_{\varepsilon}$ (after identification via $b$) is the same as $w \circ u_j^1$.

Let $\delta = \sqrt[j]{\varepsilon}$. We will define a map $\tilde{b} \: F_t \times D^2_{\delta} \rightarrow \widetilde{U}_j \subset M \times \bigl( E\gamma \times B^0(k-j) \bigr)$: 
\[
\begin{aligned}
\tilde{b}(x, y) = \bigl( x_0, \bigl( a(g, y), u_j^2(x_0) \bigr) \bigr) & &\text{where } x_0 &= b(x, \zj(y)) \in \pi_j^{-1}(F_t) \\
& & g &= \pi_{\mu^j_*(\gamma)}(u_j^1(x_0)) \in G_s
\end{aligned}
\]
Since $p^1(k) \bigl( a(g, y), u_j^2(x_0) \bigr) = \bigl( \zjx \times \id \bigr) \bigl( a(g, y), u_j^2(x_0) \bigr) = \bigl( a^j(g, \zj(y)), u_j^2(x_0) \bigr) = \bigl( a^j(g, w \circ u_j^1(x_0)), u_j^2(x_0) \bigr) = \bigl( u_j^1(x_0),u_j^2(x_0) \bigr) = u(x_0)$, we have $\tilde{b}(x,y) \in \wtilde{M}$. Moreover, $x_0 \in U_j$ and $ \bigl( a(u_j^1(x), y), u_i^2(x_0) \bigr) \in E\gamma \times B^0(k-j)$, so the image of $\tilde{b}$ really is in $\widetilde{U}_j$. 

It is easy to check that $\tilde{b}$ is continuous and injective, and $F_t$ is compact, so $\tilde{b}$ is a homeomorphism onto its image. We will show that this image is $\tilde{\pi}_j^{-1} \bigl( \bigl( f \big| _{\widetilde{V}_j} \bigr) ^{-1}(F_t) \bigr) = \bigl( f \big| _{\widetilde{U}_j} \bigr) ^{-1} \bigl( \pi_j^{-1}(F_t) \bigr)$. If $b(x,0) = x \in F_t$, then $\bigl( f \big| _{\widetilde{U}_j} \bigr) ^{-1}(x)$ consists of a single point, and this is in the image of $\tilde{b}$, because $f(\tilde{b}(x,0)) = x_0 = x$. If $b(x',y') \in \pi_j^{-1}(F_t) \setminus F_t$ (so $y' \neq 0$), then $\bigl( f \big| _{\widetilde{U}_j} \bigr) ^{-1} \bigl( b(x',y') \bigr)$ consists of $j$ points. $f \bigl( \tilde{b} \bigl( x', \sqrt[j]{y'} \bigr) \bigr) = x_0 = b(x',y')$, and $\sqrt[j]{y'}$ may have $j$ distinct values, hence $\bigl( f \big| _{\widetilde{U}_i} \circ \tilde{b} \bigr) ^{-1} \bigl( b(x',y') \bigr)$ contains at least $j$ points. Since $\tilde{b}$ is injective, it contains exactly $j$ points, and $\bigl( f \big| _{\widetilde{U}_j} \bigr) ^{-1} \bigl( b(x',y') \bigr)$ is contained in the image of $\tilde{b}$. So we have proved that $\tilde{b}$ is surjective. $\pi_j \bigl( f(\tilde{b}(x,y)) \bigr) = \pi_j(x_0) = x \in F_t$, so the image of $\tilde{b}$ is contained in $\bigl( f \big| _{\widetilde{U}_j} \bigr) ^{-1} \bigl( \pi_j^{-1}(F_t) \bigr)$. Therefore $\tilde{b} \: F_t \times D^2_{\delta} \rightarrow \tilde{\pi}_j^{-1} \bigl( \bigl( f \big| _{\widetilde{V}_j} \bigr) ^{-1}(F_t) \bigr)$ is a homeomorphism. So we have proved that any $\tilde{x} \in \widetilde{V}_j$ has a Euclidean neighbourhood. 

The map $\tilde{\pi}_j$ corresponds to the projection $F_t \times D^2_{\delta} \rightarrow F_t$ (after suitable identifications), because $\tilde{\pi}_j(\tilde{b}(x,y)) = \bigl( f \big| _{\widetilde{V}_j} \bigr) ^{-1} \bigl( \pi_j \bigl( f(\tilde{b}(x,y)) \bigr) \bigr) = \bigl( f \big| _{\widetilde{V}_j} \bigr) ^{-1} \bigl( \pi_j(x_0) \bigr) = \bigl( f \big| _{\widetilde{V}_j} \bigr) ^{-1}(x)$. Therefore $\widetilde{U}_j$ is a bundle over $\widetilde{V}_j$ with fibre $D^2$ and projection $\tilde{\pi}_j$.

Next we show that this is a smooth bundle with structure group $SO_2$ (or $O_2$ in the unoriented case). Since $f(\tilde{b}(x, y)) = x_0 = b(x, \zj(y))$, the following diagram commutes: 
\[
\xymatrix{
F_t \times D^2_{\delta} \ar[r]^-{\tilde{b}} \ar[d]_{\id \times \zj} & \tilde{\pi}_j^{-1} \bigl( \bigl( f \big| _{\widetilde{V}_j} \bigr) ^{-1}(F_t) \bigr) \ar[d]^{f} \\
F_t \times D^2_{\varepsilon} \ar[r]^-{b} & \pi_j^{-1}(F_t)
}
\tag{1}
\] 
Suppose that $\tilde{b} \: F_t \times D^2_{\delta} \rightarrow \tilde{\pi}_j^{-1} \bigl( \bigl( f \big| _{\widetilde{V}_j} \bigr) ^{-1}(F_t) \bigr)$ and $\tilde{b}' \: F_{t'} \times D^2_{\delta} \rightarrow \tilde{\pi}_j^{-1} \bigl( \bigl( f \big| _{\widetilde{V}_j} \bigr) ^{-1}(F_{t'}) \bigr)$ are two local trivializations of the bundle $\widetilde{U}_j$ corresponding to $b \: F_t \times D^2_{\varepsilon} \rightarrow \pi_j^{-1}(F_t)$ and $b' \: F_{t'} \times D^2_{\varepsilon} \rightarrow \pi_j^{-1}(F_{t'})$. Then we have the following commutative diagram: 
\[
\xymatrix{
(F_t \cap F_{t'}) \times D^2_{\delta} \ar[rr]^-{(\tilde{b}')^{-1} \, \circ \, \tilde{b}} \ar[d]_{\id \times \zj} & & (F_t \cap F_{t'}) \times D^2_{\delta} \ar[d]^{\id \times \zj} \\
(F_t \cap F_{t'}) \times D^2_{\varepsilon} \ar[rr]^-{(b')^{-1} \, \circ \, b} & & (F_t \cap F_{t'}) \times D^2_{\varepsilon} 
}
\]
The bundle $U_j$ has structure group $SO_2$ ($O_2$), so over any $x \in F_t \cap F_{t'}$ the transition map $(b')^{-1} \circ b$ corresponds to a matrix $A \in SO_2$ ($A \in O_2$), ie.\ it is multiplication by a complex number $a$ (or a map $z \mapsto a \bar{z}$). Therefore the transition map $(\tilde{b}')^{-1} \, \circ \, \tilde{b}$ has to be $z \mapsto \sqrt[j]{a}z$ (or $z \mapsto \sqrt[j]{a} \bar{z}$), and by continuity the same value of $\sqrt[j]{a}$ is used for every $z$. Therefore this too is an element of $SO_2$ ($O_2$). Moreover, the transition map $F_t \cap F_{t'} \rightarrow SO_2$ ($O_2$) in $U_j$ is smooth, so the transition map in $\widetilde{U}_j$ is smooth too. 

This implies that the local trivializations $\tilde{b}$ form a smooth atlas of $\widetilde{U}_j$. This is compatible with the smooth structure on $\wtilde{M} \setminus \widetilde{V}$ (where $\widetilde{V} = \bigsqcup_{j=2}^k \widetilde{V}_j$), therefore $\wtilde{M}$ is a smooth manifold (and by diagram (1) $f$ is a smooth map). The orientation of $\wtilde{M} \setminus \widetilde{V}$ extends to an orientation of $\wtilde{M}$, because $\widetilde{V}$ is a codimension-$2$ submanifold.

It remains to prove that $f$ is a branched covering. The submanifolds $\widetilde{V}_j$ and $V_j$ are already defined. In the oriented case $U_j$ is an oriented bundle, so its orientation, together with that of $M$ determines an orientation of $V_j$, and via the diffeomorphism $f \big| _{\widetilde{V}_j}$ an orientation of $\widetilde{V}_j$ too. Let $\tilde{\xi}_j$ be the bundle $\tilde{\pi}_j \: \widetilde{U}_j \rightarrow \widetilde{V}_j$ and $\xi_j$ be the bundle $\pi_j \: U_j \rightarrow V_j$ (the fibres $D^2_{\delta}$ and $D^2_{\varepsilon}$ can be replaced by $D^2$). The maps $\tilde{e}_j$ and $e_j$ are the obvious embeddings $\widetilde{U}_j \hookrightarrow \wtilde{M}$ and $U_j \hookrightarrow M$.

For any $p \in E\mu^j_*(\tilde{\xi}_j)$ let $I_j(p) = f \bigl( (\zjx)^{-1}(p) \bigr) \in U_j = E\xi_j$, we will show that this map is well-defined. Let $\pi_{\mu^j_*(\tilde{\xi}_j)}(p) = \tilde{x} \in \widetilde{V}_j$, then $f(\tilde{x}) \in \interior F_t$ for some $t$. Let $\tilde{b}$ and $b$ be as above (with fibres changed to $D^2$). Then $\tilde{b}$ determines a local trivialization $c \: F_t \times D^2 \rightarrow E\mu^j_*(\tilde{\xi}_j)$ such that the following diagram commutes: 
\[
\xymatrix{
F_t \times D^2 \ar[r]^-{\tilde{b}} \ar[d]_{\id \times \zj} & E\tilde{\xi}_j \ar[d]^{\zjx} \\
F_t \times D^2 \ar[r]^-{c} & E\mu^j_*(\tilde{\xi}_j)
}
\]

If $q \in (\zjx)^{-1}(p)$, then $\zjx(q) = \bigl( c \circ (\id \times \zj) \circ \tilde{b}^{-1} \bigr) (q) = p$, therefore $f(q) = \bigl( b \circ (\id \times \zj) \circ \tilde{b}^{-1} \bigr) (q) = (b \circ c^{-1}) (p)$. So $f$ maps $(\zjx)^{-1}(p)$ to a single point, therefore $I_j$ is well-defined. Moreover, the restriction of $I_j$ to $\pi_{\mu^j_*(\tilde{\xi}_j)}^{-1} \bigl( \bigl( f \big| _{\widetilde{V}_j} \bigr) ^{-1}(F_t) \bigr)$ is $b \circ c^{-1}$, which is an isomorphism between $\mu^j_*(\tilde{\xi}_j) \big|_{(f | _{\widetilde{V}_j})^{-1}(F_t)}$ and $\xi_j \big|_{F_t}$. Since this is true for every $t$, $I_j$ is an isomorphism. 

So we have defined all the necessary components. It follows easily from their constructions that they satisfy properties (B\ref{b1})--(B\ref{b5}).
\end{proof}

\begin{rem}
More generally, we will say that a $k$-fold branched covering $f \: \wtilde{M} \rightarrow M$ is induced by a generic map $u \: M \rightarrow B^1(k)$ if there is a map $\tilde{u} \: \wtilde{M} \rightarrow E^1(k)$ such that the following is a pullback diagram: 
\[
\xymatrix{
\wtilde{M} \ar[r]^-{\tilde{u}} \ar[d]_{f} & E^1(k) \ar[d]^{p^1(k)} \\
M \ar[r]^-{u} & B^1(k)
}
\]
Equivalently, $f$ is isomorphic to the branched covering described in Definition \ref{def:ind}. 

We will say that $f$ can be induced from $p^1(k)$ if there is a generic map $u$ that induces it.
\end{rem}

\subsection{Universality of $p^1(k)$}

\begin{thm} \label{thm:every}
Every $k$-fold branched covering can be induced from $p^1(k)$.
\end{thm}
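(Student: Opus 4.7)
The plan is to build a generic map $u \: M \rightarrow B^1(k)$ together with a lift $\tilde u \: \wtilde{M} \rightarrow E^1(k)$ making $f$ the pullback of $p^1(k)$. Following the decomposition of $B^1(k)$ in Definition \ref{def:univ1}, I will define $u$ piecewise: on the tubular neighbourhood $U_j = e_j(E\xi_j)$ of each singular submanifold $V_j$, I send $U_j$ into the piece $E\mu^j_*(\gamma) \times B^0(k-j)$, and on the complement $M_0 = M \setminus \bigcup_j \interior U_j$ I send into the piece $B^0(k)$.

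For the local pieces: the bundle $\xi_j$ admits a classifying map $a_j \: V_j \rightarrow B\gamma$ covered by a bundle morphism $\bar{a}_j \: E\xi_j \rightarrow E\mu^j_*(\gamma)$, and by (B\ref{b8}) the restriction of $f$ gives a $(k-j)$-fold covering $f^{-1}(U_j) \setminus \tilde{e}_j(E\tilde{\xi}_j) \rightarrow U_j$, which is classified by some $q_j \: U_j \rightarrow B^0(k-j)$. Setting $u_j = (\bar{a}_j \circ e_j^{-1},\, q_j)$ yields a map $U_j \rightarrow E\mu^j_*(\gamma) \times B^0(k-j)$. The matching lift $\tilde{u}_j$ on $f^{-1}(U_j)$ is produced by combining the classifying bundle morphism $E\tilde{\xi}_j \rightarrow E\gamma$ of $\tilde{\xi}_j$ (compatible with $\bar{a}_j$ through $I_j$, $\zjx$ and (B\ref{b5})) with the classifying lift of the $(k-j)$-fold covering.

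For the global piece over $M_0$: by (B\ref{b7}) the restriction $f \big|_{f^{-1}(M_0)}$ is a $k$-fold covering, so it has a classifying map $u_0 \: M_0 \rightarrow B^0(k) = K(S_k,1)$ and a lift $\tilde{u}_0$. The key compatibility check is that on each overlap $\partial U_j = e_j(S\xi_j)$ the composition $r_j \circ u_j$ agrees with $u_0$. By (B\ref{b5}) and (B\ref{b8}), the $k$-fold covering $f \big|_{f^{-1}(\partial U_j)}$ decomposes as the disjoint union of the $j$-fold covering coming from $\zjx$ on $S\tilde{\xi}_j$ and the $(k-j)$-fold covering from (B\ref{b8})---which is exactly the covering used to define $r_j$ in Definition \ref{def:univ1}. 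Therefore $r_j \circ u_j$ and $u_0 \big|_{\partial U_j}$ classify the same $k$-fold covering, hence are homotopic; since $B^0(k) = K(S_k,1)$ is an Eilenberg--MacLane space and the $U_j$ are pairwise disjoint by (B\ref{b4}), the homotopy extension property lets me adjust $u_0$ on a collar neighbourhood of each $\partial U_j$ to make the maps agree pointwise.

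Gluing the adjusted $u_0$ with the $u_j$ (and similarly their lifts) produces continuous maps $u \: M \rightarrow B^1(k)$ and $\tilde{u} \: \wtilde{M} \rightarrow E^1(k)$. The pullback property holds piecewise by construction, so the induced branched covering is isomorphic to $f$. Finally, a small perturbation of $u$ supported in $M_0$ makes it generic in the sense required by Definition \ref{def:ind}, without changing the induced branched covering up to isomorphism. The main obstacle is the gluing step: producing literal, rather than merely homotopic, compatibility of the pieces so that they assemble into a genuine map into the pushout $B^1(k)$, which requires the careful simultaneous use of the homotopy extension property together with the explicit form of the attaching maps $r_j$ and $\tilde{r}_j$.
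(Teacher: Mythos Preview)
Your approach is essentially the paper's: decompose $M$ into the tubular neighbourhoods $U_j = e_j(E\xi_j)$ and their complement $N$, classify the bundle and covering data on each piece, and glue. Two differences are worth flagging. First, the paper classifies $\tilde{\xi}_j$ from $\gamma$ (via a map $a_j \: \widetilde{V}_j \to B\gamma$) rather than $\xi_j$ from $\mu^j_*(\gamma)$; this automatically yields a compatible bundle map $E\xi_j \cong E\mu^j_*(\tilde{\xi}_j) \to E\mu^j_*(\gamma)$ through $I_j$, whereas in your order a map inducing $\xi_j$ from $\mu^j_*(\gamma)$ need not pull $\gamma$ back to $\tilde{\xi}_j$, since $\mu^j_*$ is not injective on isomorphism classes of bundles. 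Second, the paper sidesteps your homotopy-extension adjustment entirely: rather than choosing $u_0 \: N \to B^0(k)$ and then modifying it on a collar, it takes the already-defined boundary maps $g_0 = \bigsqcup_j r_j \circ d_j \: \partial N \to B^0(k)$ (which induce $f|_{\partial \widetilde{N}}$) and invokes universality of $p^0(k)$ to extend them, together with their lifts $\tilde g_0$, to maps $g \: N \to B^0(k)$ and $\tilde g \: \widetilde N \to E^0(k)$ inducing $f|_{\widetilde{N}}$; this handles base and lift simultaneously, which your ``and similarly their lifts'' would otherwise have to justify via covering homotopy. Finally, genericity is obtained not by perturbing $u$ on $M_0$ (where $u$ lands in $B^0(k)$, away from the zero-sections) but by choosing the bundle classifying maps $a_j$ smooth, so that $u|_{U_j}$ is a smooth bundle map transverse to $B\mu^j_*(\gamma) \times B^0(k-j)$.
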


\begin{proof}
Let $f \: \wtilde{M} \rightarrow M$ be a $k$-fold branched covering, we will define a generic map $u \: M \rightarrow B^1(k)$ which induces it. 

Let $\widetilde{V}_j$, $V_j$, $\tilde{\xi}_j$, $\xi_j$, $I_j$, $\tilde{e}_j$ and $e_j$ be as in the definition of branched coverings. The bundle $\tilde{\xi}_j$ can be induced from the universal bundle $\gamma$ by a map $a_j \: \widetilde{V}_j \rightarrow B\gamma$, ie.\ there is a bundle map $\tilde{a}_j \: E\tilde{\xi}_j \rightarrow E\gamma$ such that the following is a pullback diagram: 
\[
\xymatrix{
E\tilde{\xi}_j \ar[r]^-{\tilde{a}_j} \ar[d]_{\pi_{\tilde{\xi}_j}} & E\gamma \ar[d]^{\pi_{\gamma}} \\
\widetilde{V}_j \ar[r]^-{a_j} & B\gamma
}
\tag{2}
\]
This $a_j$ also induces $\mu^j_*(\tilde{\xi}_j)$ from $\mu^j_*(\gamma)$, so there is a bundle map $\tilde{a}_j^j \: E\mu^j_*(\tilde{\xi}_j) \rightarrow E\mu^j_*(\gamma)$ and a pullback diagram 
\[
\xymatrix{
E\mu^j_*(\tilde{\xi}_j) \ar[r]^-{\tilde{a}_j^j} \ar[d]_{\pi_{\vphantom{\tilde{\xi}_j} \smash{\mu^j_*(\tilde{\xi}_j)}}} & E\mu^j_*(\gamma) \ar[d]^{\pi_{\vphantom{\gamma} \smash{\mu^j_*(\gamma)}}} \\
\widetilde{V}_j \ar[r]^-{a_j} & B\gamma
}
\tag{3}
\]
Moreover we may assume that $\tilde{a}_j$ and $\tilde{a}_j^j$ are chosen such that the following diagram commutes: 
\[
\xymatrix{
E\tilde{\xi}_j \ar[r]^-{\tilde{a}_j} \ar[d]_{\zjx} & E\gamma \ar[d]^{\zjx} \\
E\mu^j_*(\tilde{\xi}_j) \ar[r]^-{\tilde{a}_j^j} & E\mu^j_*(\gamma) 
}
\tag{4}
\]

The diagrams (2), (3) and (4) together form a commutative diagram: 
\[
\xymatrix{
 & & E\tilde{\xi}_j \ar[rrrr]^-{\tilde{a}_j} \ar[dll]_{\zjx} \ar[ddl]^(.7){\pi_{\tilde{\xi}_j}}|\hole & & & & E\gamma \ar[dll]_{\zjx} \ar[ddl]^{\pi_{\gamma}} \\
E\mu^j_*(\tilde{\xi}_j) \ar[rrrr]^(.65){\tilde{a}_j^j} \ar[dr]_{\pi_{\vphantom{\tilde{\xi}_j} \smash{\mu^j_*(\tilde{\xi}_j)}}} & & & & E\mu^j_*(\gamma) \ar[dr]_{\pi_{\vphantom{\gamma} \smash{\mu^j_*(\gamma)}}} \\
 & \widetilde{V}_j \ar[rrrr]^-{a_j} & & & & B\gamma
}
\]
The two squares containing $a_j$ are pullback squares, and it follows from diagram chasing that the third square is a pullback square too. This, together with (B\ref{b5}), implies that the following is a pullback diagram: 
\[
\xymatrix{
\tilde{e}_j(E\tilde{\xi}_j) \ar[rr]^-{\tilde{a}_j \, \circ \, \tilde{e}_j^{-1}} \ar[d]_{f} & & E\gamma \ar[d]^{\zjx} \\
e_j(E\xi_j) \ar[rr]^-{\tilde{a}_j^j \, \circ \, I_j^{-1} \, \circ \, e_j^{-1}} & & E\mu^j_*(\gamma) 
}
\tag{5}
\]
Let $\tilde{b}_j = \tilde{a}_j \circ \tilde{e}_j^{-1}$ and $b_j = \tilde{a}_j^j \circ I_j^{-1} \circ e_j^{-1}$.

The map $f \big| _{f^{-1}(e_j(E\xi_j)) \setminus \tilde{e}_j(E\tilde{\xi}_j)} \: f^{-1}(e_j(E\xi_j)) \setminus \tilde{e}_j(E\tilde{\xi}_j) \rightarrow e_j(E\xi_j)$ is a $(k-j)$-fold covering, so it can be induced from the universal covering $p^0(k-j)$. So there are maps $c_j \: e_j(E\xi_j) \rightarrow B^0(k-j)$ and $\tilde{c}_j \: f^{-1}(e_j(E\xi_j)) \setminus \tilde{e}_j(E\tilde{\xi}_j) \rightarrow E^0(k-j)$ that induce it, ie.\ they form a pullback diagram: 
\[
\xymatrix{
f^{-1}(e_j(E\xi_j)) \setminus \tilde{e}_j(E\tilde{\xi}_j) \ar[r]^-{\tilde{c}_j} \ar[d]_{f} & E^0(k-j) \ar[d]^{p^0(k-j)} \\
e_j(E\xi_j) \ar[r]^-{c_j} & B^0(k-j) 
}
\tag{6}
\]

Since (5) and (6) are pullback diagrams, the following is a pullback diagram too: 
\[
\xymatrix{
f^{-1}(e_j(E\xi_j)) \ar[rrr]^-{(\tilde{b}_j,\, c_j \, \circ \, f) \sqcup (b_j \, \circ \, f,\, \tilde{c}_j)} \ar[d]_{f} & & & E\gamma \times B^0(k-j) \bigsqcup E\mu^j_*(\gamma) \times E^0(k-j) \ar[d]^{\zjx \times \id \bigsqcup \id \times p^0(k-j)} \\
e_j(E\xi_j) \ar[rrr]^-{(b_j,\, c_j)} & & & E\mu^j_*(\gamma) \times B^0(k-j) 
}
\tag{7}
\]
Let $\tilde{d}_j = (\tilde{b}_j, c_j \circ f) \sqcup (b_j \circ f, \tilde{c}_j)$ and $d_j = (b_j, c_j)$.

The restriction of this diagram to the sphere bundles is a pullback diagram of $k$-fold coverings: 
\[
\xymatrix{
f^{-1}(e_j(S\xi_j)) \ar[r]^-{\tilde{d}_j} \ar[d]_{f} & S\gamma \times B^0(k-j) \bigsqcup S\mu^j_*(\gamma) \times E^0(k-j) \ar[d]^{\zjx \times \id \bigsqcup \id \times p^0(k-j)} \\
e_j(S\xi_j) \ar[r]^-{d_j} & S\mu^j_*(\gamma) \times B^0(k-j) 
}
\]
The covering on the right is induced from the universal covering $p^0(k)$ by $r_j$ and $\tilde{r}_j$ (see Definition \ref{def:univ1}), therefore $r_j \circ d_j$ and $\tilde{r}_j \circ \tilde{d}_j$ induce the covering on the left.

Let $\interior E\xi_j = E\xi_j \setminus S\xi_j$, and let 
\[
\begin{aligned}
& & N &= M \setminus \left( \bigsqcup_{j=2}^k e_j( \interior E\xi_j) \right) \\
\text{and} \quad \quad & & \widetilde{N} &= f^{-1}(N) = \wtilde{M} \setminus \left( \bigsqcup_{j=2}^k f^{-1}(e_j( \interior E\xi_j)) \right) \text{\,.}
\end{aligned}
\]
Then $\partial N = \bigsqcup_{j=2}^k e_j(S\xi_j)$ and $\partial \widetilde{N} = f^{-1}(\partial N) = \bigsqcup_{j=2}^k f^{-1}(e_j(S\xi_j))$.

The covering $f \big| _{\partial \widetilde{N}} \: \partial \widetilde{N} \rightarrow \partial N$ is a restriction of the covering $f \big| _{\widetilde{N}} \: \widetilde{N} \rightarrow N$. It is induced by the maps 
\[
\begin{aligned}
& g_0 = \bigsqcup_{j=2}^k r_j \circ d_j \: \bigsqcup_{j=2}^k  e_j(S\xi_j) = \partial N \rightarrow B^0(k) \\
\text{and} \quad \quad \quad & \tilde{g}_0 = \bigsqcup_{j=2}^k \tilde{r}_j \circ \tilde{d}_j \: \bigsqcup_{j=2}^k f^{-1}(e_j(S\xi_j)) = \partial \widetilde{N} \rightarrow E^0(k) \text{\,.}
\end{aligned}
\]
Since $p^0(k)$ is universal, the inducing maps $g_0$ and $\tilde{g}_0$ extend to maps $g \: N \rightarrow B^0(k)$ and $\tilde{g} \: \widetilde{N} \rightarrow E^0(k)$ that induce $f \big| _{\widetilde{N}}$, ie.\ there is a pullback diagram: 
\[
\xymatrix{
\widetilde{N} \ar[r]^-{\tilde{g}} \ar[d]_{f} & E^0(k) \ar[d]^{p^0(k)} \\
N \ar[r]^-{g} & B^0(k) 
}
\tag{8}
\]

Let $u \: M \rightarrow B^1(k)$ be defined by 
\[
\begin{aligned}
& & u \big| _{e_j(E\xi_j)} &= d_j \: e_j(E\xi_j) \rightarrow E\mu^j_*(\gamma) \times B^0(k-j) \\
\text{and} \quad & & u \big| _{N} &= g \: N \rightarrow B^0(k) \text{\,.}
\end{aligned}
\]
Let $\tilde{u} \: \wtilde{M} \rightarrow E^1(k)$ be defined by 
\[
\begin{aligned}
& & \tilde{u} \big| _{f^{-1}(e_j(E\xi_j))} &= \tilde{d}_j \: f^{-1}(e_j(E\xi_j)) \rightarrow E\gamma \times B^0(k-j) \bigsqcup E\mu^j_*(\gamma) \times E^0(k-j) \\
\text{and} \quad & & \tilde{u} \big| _{\widetilde{N}} &= \tilde{g} \: \widetilde{N} \rightarrow E^0(k) \text{\,.}
\end{aligned}
\]
These are well-defined continuous maps, because $r_j$ and $\tilde{r}_j$ are the gluing maps in $B^1(k)$ and $E^1(k)$, and $g$ and $\tilde{g}$ extend the restrictions of $r_j \circ d_j$ and $\tilde{r}_j \circ \tilde{d}_j$, respectively. It follows from the pullback diagrams (7) and (8) that the following is a pullback diagram too: 
\[
\xymatrix{
\wtilde{M} \ar[r]^-{\tilde{u}} \ar[d]_{f} & E^1(k) \ar[d]^{p^1(k)} \\
M \ar[r]^-{u} & B^1(k) 
}
\tag{9}
\]

We may assume that the maps $a_j \: \widetilde{V}_j \rightarrow B\gamma$ were chosen to be smooth, so $u$ is generic. The pullback diagram (9) shows that $u$ induces $f$. 
\end{proof}

\begin{thm} \label{thm:uniq}
The inducing map of a $k$-fold branched covering is unique up to homotopy.
\end{thm}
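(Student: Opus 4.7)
The plan is to deduce the theorem from a relative version of Theorem \ref{thm:every}. Suppose $u_0, u_1 \: M \to B^1(k)$ are two generic maps each inducing the same $k$-fold branched covering $f \: \widetilde{M} \to M$. Consider the product branched covering $f \times \id_I \: \widetilde{M} \times I \to M \times I$, which is a branched covering between manifolds with boundary. On its boundary $M \times \{0,1\}$ the disjoint union $u_0 \sqcup u_1$ is an inducing map. If I can extend this to an inducing map $H \: M \times I \to B^1(k)$ for all of $f \times \id_I$, then $H$ is the desired homotopy between $u_0$ and $u_1$.

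The extension would be built by repeating the three-stage construction in the proof of Theorem \ref{thm:every} in a relative setting. The singular submanifolds of $f \times \id_I$ are $\widetilde{V}_j \times I$ and $V_j \times I$, with product normal bundles $\tilde{\xi}_j \times I$ and $\xi_j \times I$. First, I would choose classifying maps $a_j \: \widetilde{V}_j \times I \to B\gamma$ for $\tilde{\xi}_j \times I$ that restrict on $\widetilde{V}_j \times \{i\}$ to the classifying maps recovered from $u_i$; since both boundary maps classify the same bundle $\tilde{\xi}_j$, the standard uniqueness of bundle classifying maps up to homotopy supplies the extension. Second, I would extend the covering classifying maps $c_j \: e_j(E\xi_j) \times I \to B^0(k-j)$ from the boundary, using the analogous universal property of $p^0(k-j)$. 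Third, on the complement $N \times I$ of the tubular neighborhoods I would extend the covering classifying map $g \: N \times I \to B^0(k)$ from $\partial(N \times I)$, which is possible because $B^0(k) = K(S_k, 1)$ is aspherical, so a classifying map for a covering defined on a subcomplex extends.

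The main obstacle is ensuring compatibility at each stage, in particular that the tubular neighborhoods of the singular submanifolds of $f$ recovered from $u_0$ and $u_1$ can be made to match. Since tubular neighborhoods of a given submanifold are unique up to ambient isotopy, a preliminary isotopy of one of $u_0, u_1$ (leaving the induced $f$ unchanged) brings them into agreement, and then the three stages combine to produce $H$. A final small perturbation rel boundary, justified by Thom transversality, makes $H$ generic.
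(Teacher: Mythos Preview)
Your proposal is correct and follows essentially the same approach as the paper's proof. Both arguments first normalize the inducing maps so that they behave as bundle maps on a common tubular neighbourhood of the singular set (your ``preliminary isotopy'' is the paper's passage from $u_0$ to the adjusted map $u_1$), and then build the homotopy in the same three stages---bundle classifying maps for $\tilde{\xi}_j$, covering classifying maps into $B^0(k-j)$ on the tubular neighbourhoods, and a covering classifying map into $B^0(k)$ on the complement---using the universal properties of $\gamma$ and of $p^0$. Your framing as a relative version of Theorem~\ref{thm:every} is a clean way to package this; the paper carries out the same construction by hand. One small remark: you do not actually need $H$ to be generic, since the theorem only asks for a homotopy, not for $H$ itself to induce $f\times\id_I$; the final perturbation step is harmless but unnecessary.
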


\begin{rem}
The opposite statement is not true: homotopic maps may induce non-isomorphic branched coverings.
\end{rem}

\begin{proof}
Let $f \: \wtilde{M} \rightarrow M$ be a $k$-fold branched covering. Let $\widetilde{V}_j$, $V_j$, $\tilde{\xi}_j$, $\xi_j$, $I_j$, $\tilde{e}_j$ and $e_j$ be as in the definition of branched coverings. Let $u \: M \rightarrow B^1(k)$ be the inducing map defined in the proof of Theorem \ref{thm:every}, and let $u_0 \: M \rightarrow B^1(k)$ be any generic map that induces $f$. We will prove that $u_0$ is homotopic to $u$. 

We have $V_j = u_0^{-1}(B\gamma \times B^0(k-j))$, because the right-hand side is the singular submanifold in the branched covering induced by $u_0$ (see Proposition \ref{prop:ind}), ie.\ $f$. Since $u_0$ is generic, it is transverse to $B\gamma \times B^0(k-j)$, therefore $u_0 \big| _{e_j(E\xi_j)}$ is homotopic to a bundle map $e_j(E\xi_j) \approx E\xi_j \rightarrow E\mu^j_*(\gamma) \times B^0(k-j)$ such that the homotopy is constant on $V_j$ and no point outside $V_j$ is mapped into $B\gamma \times B^0(k-j)$ at any time during the homotopy. This can be done for every $j$, and the homotopies can be extended to a homotopy of $u_0$ using a collar neighbourhood of $\bigsqcup_{j=2}^k e_j(S\xi_j)$ in $M \setminus \bigl( \bigsqcup_{j=2}^k e_j( \interior E\xi_j) \bigr)$. After a further homotopy (that is constant on $\bigsqcup_{j=2}^k e_j(E\xi_j)$) we may also assume that no point outside $e_j(E\xi_j)$ is mapped into $E\mu^j_*(\gamma) \times B^0(k-j)$. 

Let $u_1$ denote the resulting map. This $u_1$ induces the same branched covering as $u_0$, ie.\ $f$. (Note that the singular subsets $V_j$ and the bundles $\tilde{\xi_j}$ and $\xi_j$, induced from $\gamma$ and $\mu^j_*(\gamma)$ respectively by the composition $V_j \rightarrow B\gamma \times B^0(k-j) \rightarrow B\gamma$, are the same. The $(k-j)$-fold coverings over $e_j(E\xi_j)$ (see (B\ref{b8})), induced from $p^0(k-j)$ by the composition $e_j(E\xi_j) \rightarrow V_j \rightarrow B\gamma \times B^0(k-j) \rightarrow B^0(k-j)$, are also the same. Finally, the maps $u_0 \big| _{M \setminus V}, u_1 \big| _{M \setminus V} \: M \setminus V \rightarrow B^1(k) \setminus \bigl( \bigsqcup_{j=2}^k B\gamma \times B^0(k-j) \bigr)$ are homotopic, so they induce the same $k$-fold covering over $M \setminus V$ (see (B\ref{b7})). The details are left to the reader.) In the rest of the proof we will show that $u_1$ is homotopic to $u$.

Let $\tilde{u}$ and $\tilde{u_1}$ be the maps in the following pullback diagrams: 
\[
\xymatrix{
\wtilde{M} \ar[r]^-{\tilde{u}} \ar[d]_{f} & E^1(k) \ar[d]^{p^1(k)} & & \wtilde{M} \ar[r]^-{\tilde{u}_1} \ar[d]_{f} & E^1(k) \ar[d]^{p^1(k)} \\
M \ar[r]^-{u} & B^1(k) & & M \ar[r]^-{u_1} & B^1(k)
}
\tag{10}
\]

Let $u_j^1 \: e_j(E\xi_j) \rightarrow E\mu^j_*(\gamma)$ and $u_j^2 \: e_j(E\xi_j) \rightarrow B^0(k-j)$ denote the composition of $u \big| _{e_j(E\xi_j)}$ and the projections. The maps $u_{1,j}^1$ and $u_{1,j}^2$ are defined similarly for $u_1$. 

The composition of $\tilde{u} \big| _{\tilde{e}_j(E\tilde{\xi}_j)} \: \tilde{e}_j(E\tilde{\xi}_j) \rightarrow E\gamma \times B^0(k-j)$ and the projection is a bundle map $\tilde{u}_j^1 \: \tilde{e}_j(E\tilde{\xi}_j) \rightarrow E\gamma$ (where $E\tilde{\xi}_j$ is identified with $\tilde{e}_j(E\tilde{\xi}_j)$ via $\tilde{e}_j$). Similarly the composition of $\tilde{u}_1$ and the projection is a bundle map $\tilde{u}_{1,j}^1 \: \tilde{e}_j(E\tilde{\xi}_j) \rightarrow E\gamma$. These make the following diagrams commute: 
\[
\xymatrix{
\tilde{e}_j(E\tilde{\xi}_j) \ar[r]^-{\tilde{u}_j^1} \ar[d]_{I_j \, \circ \, \zjx} & E\gamma \ar[d]^{\zjx} & & \tilde{e}_j(E\tilde{\xi}_j) \ar[r]^-{\tilde{u}_{1,j}^1} \ar[d]_{I_j \, \circ \, \zjx} & E\gamma \ar[d]^{\zjx} \\
e_j(E\xi_j) \ar[r]^-{u_j^1} & E\mu^j_*(\gamma) & & e_j(E\xi_j) \ar[r]^-{u_{1,j}^1} & E\mu^j_*(\gamma)
}
\]
where $I_j \circ \zjx$ is the restriction of $f$ by (B\ref{b5}).

Since $u \big| _{V_j}$ and $u_1 \big| _{V_j}$, the underlying maps of $\tilde{u}_j^1$ and $\tilde{u}_{1,j}^1$, both induce $\tilde{\xi}_j$ from the universal bundle $\gamma$, they are homotopic and $\tilde{u}_j^1$ and $\tilde{u}_{1,j}^1$ are homotopic bundle maps. This implies that the corresponding bundle maps $u_j^1$ and $u_{1,j}^1$ are also homotopic. The maps $u_j^2$ and $u_{1,j}^2$ induce the same covering $f \big| _{f^{-1}(e_j(E\xi_j)) \setminus \tilde{e}_j(E\tilde{\xi}_j)}$ from the universal covering $p^0(k-j)$, so they are homotopic too. Therefore $u \big| _{e_j(E\xi_j)}$ and $u_1 \big| _{e_j(E\xi_j)}$ are homotopic. In fact the following two pullback diagrams are homotopic: 
\[
\xymatrix{
f^{-1} \bigl( e_j(E\xi_j) \bigr) \ar[r]^-{\tilde{u}} \ar[d]_{f} & E\gamma \times B^0(k-j) \bigsqcup E\mu^j_*(\gamma) \times E^0(k-j) \ar[d]^{p^1(k)} \\
e_j(E\xi_j) \ar[r]^-{u} & E\mu^j_*(\gamma) \times B^0(k-j)
}
\]
\[
\xymatrix{
f^{-1} \bigl( e_j(E\xi_j) \bigr) \ar[r]^-{\tilde{u}_1} \ar[d]_{f} & E\gamma \times B^0(k-j) \bigsqcup E\mu^j_*(\gamma) \times E^0(k-j) \ar[d]^{p^1(k)} \\
e_j(E\xi_j) \ar[r]^-{u_1} & E\mu^j_*(\gamma) \times B^0(k-j)
}
\]
By this we mean that there is a pullback diagram 
\[
\xymatrix{
f^{-1} \bigl( e_j(E\xi_j) \bigr) \times I \ar[r]^-{\widetilde{H}_j} \ar[d]_{f} & E\gamma \times B^0(k-j) \bigsqcup E\mu^j_*(\gamma) \times E^0(k-j) \ar[d]^{p^1(k)} \\
e_j(E\xi_j) \times I \ar[r]^-{H_j} & E\mu^j_*(\gamma) \times B^0(k-j)
}
\tag{11}
\]
where $H_j$ composed with the projection and the appropriate restrictions of $\widetilde{H}_j$ composed with projections are all bundle maps, and $H_j \big|_ {e_j(E\xi_j) \times 0} = u$, $H_j \big|_ {e_j(E\xi_j) \times 1} = u_1$, $\widetilde{H}_j \big|_ {f^{-1}(e_j(E\xi_j)) \times 0} = \tilde{u}$ and $\widetilde{H}_j \big|_ {f^{-1}(e_j(E\xi_j)) \times 1} = \tilde{u}_1$. 

Let
\[
\begin{aligned}
& & N &= M \setminus \left( \bigsqcup_{j=2}^k e_j( \interior E\xi_j) \right) \\
\text{and} \quad \quad & & \widetilde{N} &= f^{-1}(N) = \wtilde{M} \setminus \left( \bigsqcup_{j=2}^k f^{-1}(e_j( \interior E\xi_j)) \right) \text{\,.}
\end{aligned}
\]
Then $\partial N = \bigsqcup_{j=2}^k e_j(S\xi_j)$ and $\partial \widetilde{N} = f^{-1}(\partial N) = \bigsqcup_{j=2}^k f^{-1}(e_j(S\xi_j))$. Let 
\[
\begin{aligned}
& & K_0 &= \bigsqcup_{j=2}^k r_j \circ H_j \big| _{e_j(S\xi_j)} \: \bigsqcup_{j=2}^k e_j(S\xi_j) \times I = \partial N \times I \rightarrow B^0(k) \\
\text{and} \quad \!\! & & \widetilde{K}_0 &= \bigsqcup_{j=2}^k \tilde{r}_j \circ \widetilde{H}_j \big| _{f^{-1}(e_j(S\xi_j))} \: \bigsqcup_{j=2}^k f^{-1}(e_j(S\xi_j)) \times I = \partial \widetilde{N} \times I \rightarrow E^0(k) \text{\,.}
\end{aligned}
\]
It follows from the definition of the maps $r_j$ and $\tilde{r}_j$ (see Definition \ref{def:univ1}) and diagram (11) that $K_0$ and $\widetilde{K}_0$ fit into the following pullback diagram: 
\[
\xymatrix{
\partial \widetilde{N} \times I \ar[r]^-{\widetilde{K}_0} \ar[d]_{f} & E^0(k) \ar[d]^{p^0(k)} \\
\partial N \times I \ar[r]^-{K_0} & B^0(k)
}
\tag{12}
\]

The pullback diagram (12) and the restrictions of the diagrams in (10) can be combined into the following pullback diagram: 
\[
\xymatrix{
\widetilde{N} \times 0 \bigcup \partial \widetilde{N} \times I \bigcup \widetilde{N} \times 1 \ar[rr]^-{\tilde{u} \, \cup \, \widetilde{K}_0 \, \cup \, \tilde{u}_1} \ar[d]_{f \times \id} & & E^0(k) \ar[d]^{p^0(k)} \\
N \times 0 \bigcup \partial N \times I \bigcup N \times 1 \ar[rr]^-{u \, \cup \, K_0 \, \cup \, u_1} & & B^0(k)
}
\]
This means that $u \cup K_0 \cup u_1$ induces $f \times \id \big| _{\widetilde{N} \times 0 \bigcup \partial \widetilde{N} \times I \bigcup \widetilde{N} \times 1}$ from $p^0(k)$. Since $p^0(k)$ is universal, $u \cup K_0 \cup u_1$ can be extended to a map $K \: N \times I \rightarrow B^0(k)$ that induces $f \times \id \big| _{\widetilde{N} \times I}$. Let 
\[
H = \bigsqcup_{j=2}^k H_j \: \bigsqcup_{j=2}^k e_j(E\xi_j) \times I \rightarrow B^1(k) \text{\,.}
\]
Its restriction to $\partial N \times I$ is the same as that of $K$ (viewed as a map into $B^1(k)$), namely $K_0$, so $H \cup K \: M \times I \rightarrow B^1(k)$ is a well-defined continuous map. We also have $H \cup K \big| _{M \times 0} = u$ and $H \cup K \big| _{M \times 1} = u_1$. Therefore $u$ is homotopic to $u_1$ and hence to $u_0$.
\end{proof}

\begin{thm} \label{thm:bordism}
Two generic maps $u_1 \: M_1 \rightarrow B^1(k)$ and $u_2 \: M_2 \rightarrow B^1(k)$ induce cobordant branched coverings if and only if they represent the same element in $\Omega^{SO}_n(B^1(k))$, the oriented bordism group of $B^1(k)$. 
\end{thm}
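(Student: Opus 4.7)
The plan is to use the universality Theorems \ref{thm:every} and \ref{thm:uniq} together with the manifold-with-boundary versions of Proposition \ref{prop:ind} and of ``inducing''; these were alluded to in Remark 4 of the definition of branched coverings and their proofs are the same as in the closed case after paying attention to collars.

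For the ``if'' direction, suppose $u_1$ and $u_2$ represent the same element of $\Omega^{SO}_n(B^1(k))$. Choose an oriented cobordism $W$ between $M_1$ and $M_2$ and a map $U \: W \rightarrow B^1(k)$ with $U|_{M_1} = u_1$ and $U|_{M_2} = u_2$. Since $u_1$ and $u_2$ are already generic, smooth approximation and the Thom transversality theorem applied relative to the boundary let us assume $U$ is generic. Applying Proposition \ref{prop:ind} to $U$ produces a $k$-fold branched covering $h \: \widetilde{W} \rightarrow W$, and the boundary compatibility built into the construction (pullback commutes with restriction to a collar) gives $h|_{\partial\widetilde{W}} = f_1 \sqcup f_2$. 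The singular submanifolds of $h$ are the transverse preimages $U^{-1}\bigl(B\mu^j_*(\gamma) \times B^0(k-j)\bigr)$; their orientations arise from those of $W$ and of $B\mu^j_*(\gamma)$ exactly as for $M_i$, so they are oriented codimension-$2$ submanifolds whose boundaries are the singular submanifolds of $f_1$ and $f_2$. This is precisely the cobordism required by Definition \ref{def:cob}.

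For the ``only if'' direction, suppose $f_1$ and $f_2$ are cobordant via a branched covering $h \: \widetilde{W} \rightarrow W$. The proof of Theorem \ref{thm:every} goes through for compact $W$, producing a generic map $U \: W \rightarrow B^1(k)$ which induces $h$. Restricting, $U|_{M_i}$ is a generic map that induces $f_i$, so by Theorem \ref{thm:uniq} there are homotopies $H_i \: M_i \times I \rightarrow B^1(k)$ with $H_i|_{M_i \times 0} = U|_{M_i}$ and $H_i|_{M_i \times 1} = u_i$. Glue these homotopies into a collar neighbourhood $M_1 \times [0,\varepsilon) \sqcup M_2 \times [0,\varepsilon) \subset W$ of $\partial W$ to modify $U$ to a new map $U' \: W \rightarrow B^1(k)$ with $U'|_{M_1} = u_1$ and $U'|_{M_2} = u_2$. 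Then $(W, U')$ is a bordism in $\Omega^{SO}_n(B^1(k))$ witnessing $[M_1, u_1] = [M_2, u_2]$.

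I expect the main obstacle to be bookkeeping rather than a conceptual difficulty: namely, verifying that the constructions in Proposition \ref{prop:ind} and in the proof of Theorem \ref{thm:every} really do behave well with respect to collar neighbourhoods, so that one can (a) approximate $U$ to be generic relative to the already-generic boundary data and (b) glue homotopies into a collar without disturbing genericity. Both are standard applications of transversality rel boundary and of collar-neighbourhood theorems, but they need to be stated explicitly, together with the observation that the induced singular submanifolds $\widetilde{V}_j(h)$ and $V_j(h)$ automatically inherit the correct orientations from the transverse intersection with the oriented $B\mu^j_*(\gamma) \times B^0(k-j)$.
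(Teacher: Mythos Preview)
Your argument is correct and follows essentially the same route as the paper: induce a branched covering from a generic bordism for the ``if'' direction, and use Theorem~\ref{thm:every} plus Theorem~\ref{thm:uniq} for the ``only if'' direction. The only cosmetic difference is that in the ``only if'' direction the paper avoids the collar-gluing step entirely by simply noting $[u_1] = [U|_{M_1}] = [U|_{M_2}] = [u_2]$ (homotopic maps are bordant via $M_i \times I$, and $U$ itself is a bordism between its boundary restrictions), which is a bit slicker than modifying $U$ on a collar.
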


\begin{proof}
Let $f_1 \: \wtilde{M}_{{\!}1} \rightarrow M_1$ and $f_2 \: \wtilde{M}_{{\!}2} \rightarrow M_2$ be the branched coverings induced by $u_1$ and $u_2$. 

First suppose that $u_1$ and $u_2$ are bordant, ie.\ that there is a cobordism $W$ between $M_1$ and $M_2$, and a map $v \: W \rightarrow B^1(k)$, such that $v \big| _{M_1} = u_1$ and $v \big| _{M_2} = u_2$. We may assume that $v$ is generic. Let $h \: \widetilde{W} \rightarrow W$ be the branched covering induced by $v$. Then $\widetilde{W}$ is a cobordism between $\wtilde{M}_{{\!}1}$ and $\wtilde{M}_{{\!}2}$, and $h \big| _{\wtilde{M}_{{\!}1}} = f_1$ and $h \big| _{\wtilde{M}_{{\!}2}} = f_2$. Therefore $h$ is a cobordism between $f_1$ and $f_2$.

Next suppose that $f_1$ and $f_2$ are cobordant, let $h \: \widetilde{W} \rightarrow W$ be a cobordism between them. Let $v \: W \rightarrow B^1(k)$ be a map that induces $h$. Then $v \big| _{M_1}$ and $v \big| _{M_2}$ induce $f_1$ and $f_2$ respectively. By Theorem \ref{thm:uniq} they are homotopic to $u_1$ and $u_2$. Since $v \big| _{M_1}$ and $v \big| _{M_2}$ are obviously bordant, we have $[u_1] = \left[ v \big| _{M_1} \right] = \left[ v \big| _{M_2} \right] = [u_2] \in \Omega^{SO}_n(B^1(k))$.
\end{proof}

\begin{defin}
For any branched covering $f \: \wtilde{M} \rightarrow M$ there exists a generic map $u \: M \rightarrow B^1(k)$ that induces it (by Theorem \ref{thm:every}). We define the map 
\[
H \: \Cob^1(n,k) \rightarrow \Omega^{SO}_n(B^1(k))\,, \quad H([f]) = [u] \text{\,.}
\]
\end{defin}

\begin{thm} \label{thm:cob-isom}
This $H$ is a well-defined isomorphism, therefore 
\[
\Cob^1(n,k) \cong \Omega^{SO}_n(B^1(k)) \text{\,.}
\]
\end{thm}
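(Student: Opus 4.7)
The plan is to assemble the theorem directly from the three preceding results: Theorem \ref{thm:every} (existence of an inducing map), Theorem \ref{thm:uniq} (homotopy uniqueness of the inducing map), and Theorem \ref{thm:bordism} (inducing maps are bordant iff the induced branched coverings are cobordant). The substantive content is already contained in those three theorems; what remains is essentially formal bookkeeping plus verifying surjectivity via transversality.

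First, I would verify that $H$ is well-defined. There are two choices to worry about: the choice of the inducing map $u$, and the choice of representative $f$ in its cobordism class. For the first, any two generic maps inducing the same $f$ are homotopic by Theorem \ref{thm:uniq}, so they represent the same class in $\Omega^{SO}_n(B^1(k))$. For the second, if $f_1$ and $f_2$ are cobordant $k$-fold branched coverings with generic inducing maps $u_1$ and $u_2$, then Theorem \ref{thm:bordism} (the ``only if'' direction) gives $[u_1]=[u_2]$ in $\Omega^{SO}_n(B^1(k))$. Combining these, $H([f])$ depends only on $[f]$.

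Next, $H$ is a homomorphism because disjoint union of branched coverings corresponds to disjoint union of their inducing maps (one checks that if $u_i$ induces $f_i$ for $i=1,2$, then $u_1\sqcup u_2 \: M_1 \sqcup M_2 \rightarrow B^1(k)$ is generic and induces $f_1 \sqcup f_2$), and disjoint union is the group operation on both sides.

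For surjectivity, take any class $\alpha \in \Omega^{SO}_n(B^1(k))$ represented by a continuous map $u_0 \: M \rightarrow B^1(k)$ with $M$ a closed oriented $n$-manifold. By smooth approximation and the Thom transversality theorem applied to each submanifold $B\mu^j_*(\gamma) \times B^0(k-j)$ for $2 \leq j \leq k$, there is a generic map $u$ homotopic to $u_0$, and $[u] = \alpha$. By Proposition \ref{prop:ind}, $u$ induces a $k$-fold branched covering $f$, and then $H([f]) = [u] = \alpha$.

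For injectivity, suppose $H([f_1]) = H([f_2])$ with inducing maps $u_1$ and $u_2$. Then $u_1$ and $u_2$ represent the same element in $\Omega^{SO}_n(B^1(k))$, so by the ``if'' direction of Theorem \ref{thm:bordism} the induced branched coverings $f_1$ and $f_2$ are cobordant, hence $[f_1] = [f_2]$. The main obstacle, now dispatched by the preceding theorems, was packaging the transversality-plus-extension argument that lies behind Theorem \ref{thm:bordism}; given that theorem, the present statement is essentially immediate.
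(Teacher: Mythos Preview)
Your proof is correct and follows essentially the same approach as the paper's: well-definedness via Theorem \ref{thm:bordism}, the homomorphism property via disjoint union, surjectivity via the density of generic maps, and injectivity via Theorem \ref{thm:bordism} again. The only cosmetic difference is that you separately invoke Theorem \ref{thm:uniq} for the independence of $H$ from the choice of inducing map, whereas the paper folds this into a single application of Theorem \ref{thm:bordism} (taking $f_1 = f_2$).
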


\begin{proof}
First, $H$ is a well-defined map, because if $f_1$ and $f_2$ are cobordant branched coverings, and $u_1$ and $u_2$ are any generic maps that induce them, then $u_1$ and $u_2$ are bordant by Theorem \ref{thm:bordism}. Second, $H$ is a homomorphism, because if $f$ and $g$ are induced by $u$ and $v$ respectively, then $(f \sqcup g)$ is induced by $(u \bigsqcup v)$, hence $H([f] \sqcup [g]) = H([f \sqcup g]) = [u \bigsqcup v] = [u] \bigsqcup [v]$. Third, $H$ is surjective, because every bordism class $[u]$ contains a generic map $u'$, which induces a branched covering $f$, whose image is $H([f])=[u']=[u]$. Finally, $H$ is injective, because if $H([f]) = H([g])$, ie.\ the maps $u$ and $v$ inducing $f$ and $g$ are bordant, then $f$ and $g$ are cobordant by Theorem \ref{thm:bordism}. Therefore $H$ is an isomorphism.
\end{proof}

\section{Applications}

In this section we prove the theorems announced in the Introduction. The two parts of Theorem \ref{thm:main-mod} are proved simultaneously in Theorem \ref{thm:mod-isom}. Theorem \ref{thm:main-rk} is the combination of Theorems \ref{thm:rk-so} and \ref{thm:rk-o}, and Theorem \ref{thm:main-2dim} is the combination of Theorems \ref{thm:2dim-group-so} and \ref{thm:2dim-group-o}. Theorem \ref{thm:main-inv} follows from Theorem \ref{thm:invar}.

\subsection{Rational calculations}

\begin{defin} \label{def:normal-ind}
For a $k$-fold branched covering $f \: \wtilde{M} \rightarrow M$ between $n$-dimensional manifolds and $2 \leq j \leq k$ let $a_j \: \widetilde{V}_j \rightarrow B\gamma$ be the map that induces $\tilde{\xi}_j$ from the universal bundle $\gamma$ (as in the proof of Theorem \ref{thm:every}). It is well-defined up to homotopy. 

When the branched covering is not obvious from the context we will use the notation $a_j^f$ instead of $a_j$.
\end{defin}

In the oriented case $B\gamma_{SO} = BSO_2$, and $\widetilde{V}_j$ is oriented, so $a_j$ represents an element in the oriented bordism group $\Omega^{SO}_{n-2}(BSO_2)$. Note that $\Omega^{SO}_{n-2}(BSO_2) \cong \Omega^{\gamma_{SO}}_{n-2}(BSO_2)$, because $\gamma_{SO}$ is oriented (see Definition \ref{def:twisted}). In the unoriented case $B\gamma_O = BO_2$, and $a_j$ represents an element in the twisted oriented bordism group $\Omega^{\gamma_O}_{n-2}(BO_2)$, because $\tilde{\xi}_j$ is the normal bundle of $\widetilde{V}_j$ in $\wtilde{M}$, so $T\widetilde{V}_j \oplus a_j^*(\gamma_O) \cong T\widetilde{V}_j \oplus \tilde{\xi}_j \cong T\wtilde{M} \big| _{\widetilde{V}_j}$, and $\wtilde{M}$ is oriented.

Recall that $\Cob^1(*,k)$ is a graded $\Omega^{SO}_*$-module (see Definition \ref{def:module}).

\begin{defin}
Let $a_j$ be as above, and define the maps 
\[
G_n \: \Cob^1(n,k) \rightarrow \Omega^{SO}_n \bigoplus \left( \bigoplus_{j=2}^k \Omega^{\gamma}_{n-2}(B\gamma) \right) \,, \quad G_n([f]) = ([M], [a_2], [a_3], \ldots , [a_k]) 
\]
and
\[
G = \bigoplus_{n=0}^{\infty} G_n \: \Cob^1(*,k) \rightarrow \Omega^{SO}_* \bigoplus \left( \bigoplus_{j=2}^k \Omega^{\gamma}_{*-2}(B\gamma) \right) \text{\,,}
\]
where $\Omega^{\gamma}_{*-2}(B\gamma)$ is the same module as $\Omega^{\gamma}_*(B\gamma)$, but its grading is shifted by $2$.
\end{defin}

\begin{prop} \label{prop:mod-hom}
This $G$ is a homomorphism of graded $\Omega^{SO}_*$-modules.
\end{prop}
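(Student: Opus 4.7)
The plan is to verify three things: $G$ is well-defined on cobordism classes, $G$ is additive under disjoint union, and $G$ is $\Omega^{SO}_*$-linear. The grading is immediate, since for $[f] \in \Cob^1(n,k)$ the target $M$ has dimension $n$ while each singular submanifold $\widetilde{V}_j$ has dimension $n-2$, and the shift in $\Omega^{\gamma}_{*-2}(B\gamma)$ is designed to match.

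For well-definedness, I would take a cobordism $h \: \widetilde{W} \rightarrow W$ between $f_1$ and $f_2$. By Definition \ref{def:cob} the singular submanifolds $\widetilde{V}_j^h \subset \widetilde{W}$ are oriented cobordisms between $\widetilde{V}_j^{f_1}$ and $\widetilde{V}_j^{f_2}$, and the tubular neighbourhood bundle $\tilde{\xi}_j^h$ restricts on the two boundary components to $\tilde{\xi}_j^{f_1}$ and $\tilde{\xi}_j^{f_2}$. A classifying map $a_j^h \: \widetilde{V}_j^h \rightarrow B\gamma$ of $\tilde{\xi}_j^h$ then restricts on the boundary to classifying maps of the two bundles on the ends; after a homotopy we may assume these boundary restrictions agree with $a_j^{f_1}$ and $a_j^{f_2}$. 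Hence $[a_j^{f_1}] = [a_j^{f_2}]$ in the appropriate bordism group of $B\gamma$, and $[M_1]=[M_2]$ in $\Omega^{SO}_n$ via $W$, so $G_n([f_1]) = G_n([f_2])$.

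Additivity is direct: the target of $f_1 \sqcup f_2$ is $M_1 \sqcup M_2$, its singular submanifolds are $\widetilde{V}_j^{f_1} \sqcup \widetilde{V}_j^{f_2}$, and an inducing map is $a_j^{f_1} \sqcup a_j^{f_2}$, which represents $[a_j^{f_1}] + [a_j^{f_2}]$ since disjoint union is the group operation in both summands. For module compatibility, for $[N] \in \Omega^{SO}_*$ the product $f \times \id_N \: \widetilde{M} \times N \rightarrow M \times N$ has target bordism class $[M \times N] = [N] \cdot [M]$, and its singular submanifolds are $\widetilde{V}_j \times N$ with normal bundle $\tilde{\xi}_j \times N$, so the classifying map factors as $a_j \circ p_1 \: \widetilde{V}_j \times N \rightarrow \widetilde{V}_j \rightarrow B\gamma$, which by definition represents $[N] \cdot [a_j]$.

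The one point requiring care is the twist in the unoriented case: to conclude $[a_j^h] \in \Omega^{\gamma_O}_{n-1}(BO_2)$ and $[a_j^{f\times\id_N}] \in \Omega^{\gamma_O}_{*}(BO_2)$ one must check that $T\widetilde{V}_j^h \oplus (a_j^h)^*\gamma_O$ and $T(\widetilde{V}_j \times N) \oplus (a_j \circ p_1)^*\gamma_O$ are suitably oriented and that these orientations restrict correctly (in the cobordism case) to the orientations coming from the original representatives. Both follow from the isomorphism $T\widetilde{V}_j \oplus \tilde{\xi}_j \cong T\widetilde{M}|_{\widetilde{V}_j}$ combined with the fact that $\widetilde{W}$, $\widetilde{M}$ and $\widetilde{M} \times N$ are oriented and these orientations are compatible under restriction and product. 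This is the main technical point; once it is checked, the three assertions above assemble into the statement that $G$ is a homomorphism of graded $\Omega^{SO}_*$-modules.
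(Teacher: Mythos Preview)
Your proposal is correct and follows essentially the same route as the paper: additivity via $a_j^{f_1 \sqcup f_2} = a_j^{f_1} \sqcup a_j^{f_2}$, $\Omega^{SO}_*$-linearity via the observation that the normal bundle of $\widetilde{V}_j \times N$ is the pullback of $\tilde{\xi}_j$ along the projection so that $a_j^{f \times \id_N} = a_j^f \circ p_1$, and the orientation check in the twisted case via $T\widetilde{V}_j \oplus \tilde{\xi}_j \cong T\wtilde{M}|_{\widetilde{V}_j}$. The only difference is that you include an explicit well-definedness argument (using the cobordism $h$ and the classifying map $a_j^h$ on its singular submanifolds), whereas the paper leaves this implicit; this is a reasonable addition but not a different strategy.
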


\begin{proof}
The sum of the cobordism classes of $f_1 \: \wtilde{M}_{{\!}1} \rightarrow M_1$ and $f_2 \: \wtilde{M}_{{\!}2} \rightarrow M_2$ is that of $f_1 \sqcup f_2 \: \wtilde{M}_{{\!}1} \bigsqcup \wtilde{M}_{{\!}2} \rightarrow M_1 \bigsqcup M_2$, and $a_j^{f_1 \sqcup f_2} = a_j^{f_1} \bigsqcup a_j^{f_2}$, so $G_n$ is a homomorphism of abelian groups, so this holds for $G$ too. 

For a branched covering $f \: \wtilde{M} \rightarrow M$ and $[N] \in \Omega^{SO}_*$ the product $[N] \cdot [f]$ is the cobordism class of $f \times \id_N \: \wtilde{M} \times N \rightarrow M \times N$. The normal bundle of $\widetilde{V}_j \times N$ in $\wtilde{M} \times N$ is the pullback of $\tilde{\xi}_j$ by the projection $\widetilde{V}_j \times N \rightarrow \widetilde{V}_j$, so $a_j^{f \times \id_N} \: \widetilde{V}_j \times N \rightarrow B\gamma$ is the composition of the projection and $a_j^f \: \widetilde{V}_j \rightarrow B\gamma$. Moreover, the orientation of $T(\wtilde{M} \times N) \big| _{\widetilde{V}_j \times N}$ is given by the pullback of the orientation of $T\wtilde{M} \big| _{\widetilde{V}_j}$ plus the orientation of $N$, therefore $[a_j^{f \times \id_N}] = [N] \cdot [a_j^f] \in \Omega^{\gamma}_*(B\gamma)$. Therefore $G$ is compatible with multiplication.

Finally, $G$ preserves the grading, because it is the direct sum of the maps $G_n$.
\end{proof}

\begin{lem} \label{lem:twisted-isom}
For any space $X$ and $D^2$-bundle $\xi$ over $X$ we have $\Omega^{\xi}_n(X) \cong \Omega^{\mu^j_*(\xi)}_n(X)$ for every $n$ and $j \geq 2$.
\end{lem}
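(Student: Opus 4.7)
The plan is to show that the twisted oriented bordism group $\Omega^\eta_n(X)$ depends on the rank-$2$ bundle $\eta$ only through its determinant line bundle $\det(\eta)$, and then to observe that $\det(\xi) = \det(\mu^j_*(\xi))$. Together these facts produce a canonical isomorphism --- in fact an equality of representatives and equivalence relations.

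First I would verify the elementary group-theoretic identity $\det \circ \mu^j = \det\colon O_2 \to \{\pm 1\}$. On $SO_2$ this is immediate, since $\det(\mu^j(A)) = \det(A^j) = 1$, and on the generator $\tau$ of $O_2/SO_2$ we have $\det(\mu^j(\tau)) = \det(\tau) = -1$ by definition of $\mu^j$. As a consequence, if $\xi$ is specified by an $O_2$-cocycle $\{g_{\alpha\beta}\}$, then $\mu^j_*(\xi)$ has cocycle $\{\mu^j(g_{\alpha\beta})\}$, and the determinant line bundles of $\xi$ and $\mu^j_*(\xi)$ are both associated to the cocycle $\{\det(g_{\alpha\beta})\}$; hence $\det(\mu^j_*(\xi)) = \det(\xi)$ as real line bundles over $X$. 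Pulling back by any $u\colon M \to X$ gives $\det(u^*(\mu^j_*(\xi))) = \det(u^*(\xi))$.

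Next I would invoke the standard principle that for a real vector bundle $\eta$ of any rank, an orientation of $\eta$ is equivalent to a positively oriented trivialisation of $\det(\eta)$. Applied to $\eta = TM \oplus u^*(\xi)$, whose determinant splits canonically as $\det(TM) \otimes u^*\det(\xi)$, together with the identification from the previous step, this yields a canonical bijection between orientations of $TM \oplus u^*(\xi)$ and orientations of $TM \oplus u^*(\mu^j_*(\xi))$. In particular, the classes of representatives allowed by Definition \ref{def:twisted} are literally the same.

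Finally, the same identification of determinants applies to a bordism $v\colon W \to X$, so an orientation of $TW \oplus v^*(\xi)$ corresponds canonically to one of $TW \oplus v^*(\mu^j_*(\xi))$, and restrictions to the boundary match up. Hence the map sending $(M,u)$ with an orientation of $TM \oplus u^*(\xi)$ to $(M,u)$ with the corresponding orientation of $TM \oplus u^*(\mu^j_*(\xi))$ is a well-defined isomorphism of the two bordism groups. The only genuine step is the identity $\det \circ \mu^j = \det$; once that is established, the remainder is a formal identification and there is no real obstacle.
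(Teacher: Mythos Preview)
Your argument is correct. The identity $\det\circ\mu^j=\det$ on $O_2$ immediately gives $\det(\mu^j_*(\xi))=\det(\xi)$ as line bundles, and since orientations of a real vector bundle are exactly the homotopy classes of nowhere-zero sections of its determinant, the sets of orientations of $TM\oplus u^*(\xi)$ and $TM\oplus u^*(\mu^j_*(\xi))$ are canonically identified. The bordism relation matches for the same reason applied to $W$, so you get an isomorphism of the twisted bordism groups.

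The paper takes a different, more geometric route: it uses the fibrewise map $\zjx\colon Eu^*(\xi)\to Eu^*(\mu^j_*(\xi))$, which is a local diffeomorphism off the zero-section, to transport orientations of the total spaces directly, and checks that this push-forward is well defined because $\zj$ is orientation-preserving on each fibre. The two isomorphisms coincide (in a local trivialisation both reduce to ``use the same orientation of $F\times D^2$''), so there is no discrepancy. Your approach is quicker and makes the dependence only on $w_1(\xi)$ transparent; the paper's approach is tailored to its later use in the proof of Theorem~\ref{thm:mod-isom}, where the branched-covering map $f$ plays the role of $\zjx$ and one needs to know that the orientation of $T\widetilde{V}_j\oplus\tilde{\xi}_j$ induced from $\wtilde{M}$ corresponds under this specific isomorphism to the orientation of $TV_j\oplus\xi_j$ induced from $M$. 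If you adopt your proof, you would want to note that your identification agrees with the one given by $\zjx$ so that this later step goes through unchanged.
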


\begin{proof}
We describe here the isomorphism that we will use in the proof of Theorem \ref{thm:mod-isom}.

For any map $u \: M \rightarrow X$ we will define a bijection between the orientations of $TM \oplus u^*(\xi)$ and those of $TM \oplus u^*(\mu^j_*(\xi))$. (This implies that $TM \oplus u^*(\xi)$ is orientable if and only if $TM \oplus u^*(\mu^j_*(\xi))$ is orientable.)

First note that an orientation of the bundle $TM \oplus u^*(\xi)$ corresponds to an orientation of the manifold $Eu^*(\xi)$. Since $u^*(\mu^j_*(\xi)) \cong \mu^j_*(u^*(\xi))$, there is a map $\zjx \: Eu^*(\xi) \rightarrow Eu^*(\mu^j_*(\xi))$. This is a local diffeomorphism apart from the zero-section (which is a codimension-$2$ submanifold), so an orientation of $Eu^*(\mu^j_*(\xi))$ can be pulled back to an orientation of $Eu^*(\xi)$. On the other hand, if $Eu^*(\xi)$ is oriented, then its local orientation at a point $p \in Eu^*(\xi) \setminus M$ determines a local orientation of $Eu^*(\mu^j_*(\xi))$ at $\zjx(p)$. To get a well-defined orientation on $Eu^*(\mu^j_*(\xi)) \setminus M$ (which will automatically extend to an orientation of $Eu^*(\mu^j_*(\xi))$), we need to check that whenever $\zjx(p) = \zjx(q)$, then $\zjx$ sends the local orientations at $p$ and $q$ to the same local orientation at this point. This holds, because $\zjx(p) = \zjx(q)$ can happen only if $p$ and $q$ are in the same fibre, and the map $\zj \: D^2 \rightarrow D^2$ has this property. The two constructions described here are inverses of each other, so we get a bijection.

So we can map the bordism class of $u$ with a given orientation of $TM \oplus u^*(\xi)$ to the bordism class of $u$ with the corresponding orientation of $TM \oplus u^*(\mu^j_*(\xi))$, and we have a map in the other direction too. These maps are well-defined, because if $M$ is a manifold with boundary, then corresponding orientations of $TM \oplus u^*(\xi)$ and $TM \oplus u^*(\mu^j_*(\xi))$ induce corresponding orientations of $T(\partial M) \oplus \bigl( u  \big| _{\partial M} \bigr)^*(\xi)$ and $T(\partial M) \oplus \bigl( u  \big| _{\partial M} \bigr)^*(\mu^j_*(\xi))$ over the boundary. The maps are inverses of each other, so they are isomorphisms. 
\end{proof}

\begin{thm} \label{thm:mod-isom}
The map 
\[
G \otimes \id_{\Q} \: \Cob^1(*,k) \otimes \Q \rightarrow \left( \Omega^{SO}_* \bigoplus \left( \bigoplus_{j=2}^k \Omega^{\gamma}_{*-2}(B\gamma) \right) \right) \otimes \Q 
\]
is an isomorphism of graded $(\Omega^{SO}_* \otimes \Q)$-modules.
\end{thm}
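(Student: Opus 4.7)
The strategy is to combine the isomorphism $\Cob^1(n,k) \cong \Omega^{SO}_n(B^1(k))$ of Theorem \ref{thm:cob-isom} with a computation of $\Omega^{SO}_*(B^1(k)) \otimes \Q$ based on the pushout construction of $B^1(k)$ from Definition \ref{def:univ1}.

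First I would analyse the pair $(B^1(k), B^0(k))$. The pushout description of $B^1(k)$ shows that the inclusion $B^0(k) \hookrightarrow B^1(k)$ is a cofibration whose cofibre collapses each piece along its sphere bundle, giving
\[
B^1(k)/B^0(k) \;\simeq\; \bigvee_{j=2}^k \bigl( \mathrm{Th}(\mu^j_*(\gamma)) \wedge B^0(k-j)_+ \bigr).
\]
The long exact sequence of the pair, combined with the Thom isomorphism on each wedge summand (in the unoriented case this is the twisted Thom isomorphism, since $\mu^j_*(\gamma_O)$ is unorientable) and Lemma \ref{lem:twisted-isom}, then reads
\[
\cdots \to \Omega^{SO}_n(B^0(k)) \xrightarrow{i_*} \Omega^{SO}_n(B^1(k)) \to \bigoplus_{j=2}^k \Omega^{\gamma}_{n-2}(B\gamma \times B^0(k-j)) \xrightarrow{\partial} \Omega^{SO}_{n-1}(B^0(k)) \to \cdots
\]

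Second I would tensor with $\Q$ and use that $B^0(m) = K(S_m,1)$ is rationally contractible for every $m$, since $S_m$ is finite. This gives $\Omega^{SO}_*(B^0(k)) \otimes \Q \cong \Omega^{SO}_* \otimes \Q$ and $\Omega^{\gamma}_{n-2}(B\gamma \times B^0(k-j)) \otimes \Q \cong \Omega^{\gamma}_{n-2}(B\gamma) \otimes \Q$; the latter follows from writing the twisted bordism as (reduced, shifted) oriented bordism of the Thom spectrum and noting that $\mathrm{Th}(\xi) \wedge Y_+ \simeq_{\Q} \mathrm{Th}(\xi)$ whenever $Y \simeq_{\Q} *$. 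The composition $B^0(k) \hookrightarrow B^1(k) \to *$ is a rational equivalence, so $i_*$ is split-injective after tensoring with $\Q$, hence every boundary map $\partial$ vanishes rationally and the long exact sequence becomes a split short exact sequence
\[
0 \to \Omega^{SO}_n \otimes \Q \to \Omega^{SO}_n(B^1(k)) \otimes \Q \to \bigoplus_{j=2}^k \bigl( \Omega^{\gamma}_{n-2}(B\gamma) \otimes \Q \bigr) \to 0.
\]

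Third I would identify this isomorphism, pulled back via Theorem \ref{thm:cob-isom}, with $G \otimes \id_{\Q}$. The $\Omega^{SO}_n$-coordinate is induced by $B^1(k) \to *$, hence sends $[f] = [M, u]$ to $[M]$. The $j$-th twisted coordinate expresses the concrete content of the Thom isomorphism: making $u$ transverse to $B\gamma \times B^0(k-j) \subset E\mu^j_*(\gamma) \times B^0(k-j)$ and taking the preimage produces the pair consisting of the singular submanifold $V_j$ together with the projection of $u|_{V_j}$ to $B\gamma$, which under the diffeomorphism $f|_{\widetilde V_j}$ is exactly the inducing map $a_j$ of Definition \ref{def:normal-ind}. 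The $\Omega^{SO}_*$-module structure is preserved at each step, as all the constructions commute with exterior product by a closed oriented manifold, so combined with Proposition \ref{prop:mod-hom} this finishes the argument.

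The main technical obstacle is the verification that $\Omega^{\gamma}_{n-2}(B\gamma \times B^0(k-j)) \otimes \Q \cong \Omega^{\gamma}_{n-2}(B\gamma) \otimes \Q$ in the unoriented case: twisted bordism does not split as a na\"ive Künneth tensor product over $\Q$, and one has to either reformulate through the Thom spectrum as above or handle the twisting by $w_1(\gamma_O)$ explicitly through the twisted Atiyah--Hirzebruch spectral sequence. Everything else is bookkeeping of standard isomorphisms.
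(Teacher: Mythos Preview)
Your proposal is correct and follows essentially the same route as the paper: both pass through Theorem \ref{thm:cob-isom}, analyse the cofibre of $B^0(k) \hookrightarrow B^1(k)$ as a wedge of Thom spaces, use the rational contractibility of the $B^0(m)$ to collapse the long exact sequence, apply the Pontryagin--Thom isomorphism and Lemma \ref{lem:twisted-isom}, and then trace the composite back to $G_n$. The only place where the paper is more explicit is the final identification step, where it checks that the \emph{specific} isomorphism of Lemma \ref{lem:twisted-isom} carries $[b_j|_{V_j}] \in \Omega^{\mu^j_*(\gamma)}_{n-2}(B\gamma)$ (with the orientation inherited from $M$) to $[a_j] \in \Omega^{\gamma}_{n-2}(B\gamma)$ (with the orientation inherited from $\wtilde{M}$) via (B\ref{b3}), (B\ref{b5}) and (B\ref{b6}); you gesture at this with ``under the diffeomorphism $f|_{\widetilde V_j}$'', which is the right idea but would need those properties spelled out in a full write-up.
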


\begin{proof}
By Proposition \ref{prop:mod-hom} $G$ is a homomorphism of graded modules, so we need to prove that $G_n \otimes \id_{\Q}$ is an isomorphism for every $n$. We will do this by constructing a sequence of isomorphisms from $\Cob^1(n,k) \otimes \Q$ to $\bigl( \Omega^{SO}_n \bigoplus \bigl( \bigoplus_{j=2}^k \Omega^{\gamma}_{n-2}(B\gamma) \bigr) \bigr) \otimes \Q$, and showing that the composition of these isomorphisms coincides with $G_n \otimes \id_{\Q}$. 

By Theorem \ref{thm:cob-isom} there is an isomorphism $H \: \Cob^1(n,k) \rightarrow \Omega^{SO}_n(B^1(k))$, and it maps the cobordism class $[f]$ of a branched covering $f \: \wtilde{M} \rightarrow M$ to the bordism class $[u]$ of the inducing map $u \: M \rightarrow B^1(k)$ constructed in the proof of Theorem \ref{thm:every}.

The isomorphism $\Omega^{SO}_n(B^1(k)) \cong \Omega^{SO}_n \bigoplus \widetilde{\Omega}^{SO}_n(B^1(k))$ maps $[u]$ into $([M], [u'])$, where $u' = u \bigsqcup v \: M \bigsqcup (-M) \rightarrow B^1(k)$ and $v$ is a constant map (and we may assume that its image is a point of $B^0(k)$).

Since $B^0(k) = K(S_k, 1)$ is rationally contractible, $\widetilde{\Omega}^{SO}_i(B^0(k)) \otimes \Q \cong 0$ for every $i$. So the reduced oriented bordism exact sequence of the pair $\bigl( B^1(k), B^0(k) \bigr)$ implies that $\widetilde{\Omega}^{SO}_n(B^1(k)) \otimes \Q \cong \widetilde{\Omega}^{SO}_n \bigl( B^1(k), B^0(k) \bigr) \otimes \Q \cong \widetilde{\Omega}^{SO}_n \bigl( B^1(k) / B^0(k) \bigr) \otimes \Q$.

By Definition \ref{def:univ1} 
\[
B^1(k) / B^0(k) = \bigvee_{j=2}^k \bigl( E\mu^j_*(\gamma) \times B^0(k-j) \bigr) / \bigl( S\mu^j_*(\gamma) \times B^0(k-j) \bigr) \text{\,.}
\] 
Since $B^0(k-j)$ has the rational homology of a point, 
\[
\widetilde{\Omega}^{SO}_n \bigl( B^1(k) / B^0(k) \bigr) \otimes \Q \cong \bigoplus_{j=2}^k \widetilde{\Omega}^{SO}_n(T\mu^j_*(\gamma)) \otimes \Q \text{\,,}
\] 
where $T\mu^j_*(\gamma) = E\mu^j_*(\gamma) / S\mu^j_*(\gamma)$ denotes the Thom-space. (This follows from the collapsing of the Atiyah--Hirzebruch spectral sequence for rational bordism (see Conner \cite{conner79}) and the K\"unneth-formula for rational homology.)

This isomorphism maps $[u'] \otimes 1$ to $([b_2'], [b_3'], \ldots , [b_k']) \otimes 1$. Here $b_j \: e_j(E\xi_j) \rightarrow E\mu^j_*(\gamma)$ is the composition of $u' \big| _{e_j(E\xi_j)} = u \big| _{e_j(E\xi_j)} \: e_j(E\xi_j) \rightarrow E\mu^j_*(\gamma) \times B^0(k-j)$ and the projection $E\mu^j_*(\gamma) \times B^0(k-j) \rightarrow E\mu^j_*(\gamma)$ (see the proof of Theorem \ref{thm:every}), and $b_j' = b_j \cup w_j \: e_j(E\xi_j) \cup_{\id_{e_j(S\xi_j)}} (-e_j(E\xi_j)) \rightarrow T\mu^j_*(\gamma)$, where $w_j$ is the constant map to the basepoint of the Thom-space $T\mu^j_*(\gamma)$. The map $b_j'$ is in the relative bordism group, because $e_j(E\xi_j) \cup_{\id_{e_j(S\xi_j)}} (-e_j(E\xi_j)) \approx \partial(e_j(E\xi_j) \times I)$.

It follows from the Pontryagin--Thom construction that 
\[
\widetilde{\Omega}^{SO}_n(T\mu^j_*(\gamma)) \cong \Omega^{\mu^j_*(\gamma)}_{n-2}(B\mu^j_*(\gamma)) = \Omega^{\mu^j_*(\gamma)}_{n-2}(B\gamma) \text{\,.} 
\]
Under this isomorphism $[b_j']$ corresponds to the bordism class of $b_j' \big| _{(b_j')^{-1}(B\gamma)} = b_j \big| _{(b_j)^{-1}(B\gamma)} = b_j \big| _{V_j} \: V_j \rightarrow B\gamma$. This induces $\xi_j$ from $\mu^j_*(\gamma)$, and the orientation of $TV_j \oplus \bigl( b_j \big| _{V_j} \bigr)^*(\mu^j_*(\gamma)) \cong TV_j \oplus \xi_j \cong TM \big| _{V_j}$ is determined by the orientation of $M$.

By Lemma \ref{lem:twisted-isom} we have
\[
\Omega^{\mu^j_*(\gamma)}_{n-2}(B\gamma) \cong \Omega^{\gamma}_{n-2}(B\gamma) \text{\,.}
\]
It follows from (B\ref{b5}), (B\ref{b6}) and (B\ref{b3}) that the isomorphism described in the proof of Lemma \ref{lem:twisted-isom} maps $\bigl[ b_j \big| _{V_j} \bigr]$ with the orientation of $TV_j \oplus \xi_j$ coming from that of $e_j(E\xi_j) \subset M$ to the bordism class of $b_j \circ f \big| _{\widetilde{V}_j} = \zjx \circ \tilde{a}_j \big| _{\widetilde{V}_j} = \tilde{a}_j \big| _{\widetilde{V}_j} = a_j \: \widetilde{V}_j \rightarrow B\gamma$ (see diagram (5)) with the orientation of $T\widetilde{V}_j \oplus \tilde{\xi}_j$ coming from that of $\tilde{e}_j(E\tilde{\xi}_j) \subset \wtilde{M}$, ie.\ $[a_j] \in \Omega^{\gamma}_{n-2}(B\gamma)$. 

So we have proved that $G_n \otimes \id_{\Q}$ is an isomorphism. 
\end{proof}

\begin{defin}
For a non-negative integer $i$ let $\pi(i)$ denote the number of partitions of $i$ into positive integers (without ordering). In particular, $\pi(0) = 1$.
\end{defin}

\begin{thm} \label{thm:rk-so}
The rank of $\Cob^1_{SO}(n,k)$ is given by
\[
\rk \Cob^1_{SO}(n,k) = 
\begin{cases}
(k-1)\sum_{i=0}^{m-1}\pi(i) + \pi(m) & \text{if $n=4m$, } \\
(k-1)\sum_{i=0}^{m-1}\pi(i) & \text{if $n=4m-2$, } \\
0 & \text{if $n$ is odd. }
\end{cases}
\]
\end{thm}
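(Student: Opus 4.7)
The plan is to apply Theorem \ref{thm:mod-isom}(a) (the oriented version of Theorem \ref{thm:main-mod}) to reduce the question to computing ranks of $\Omega^{SO}_n$ and of $\Omega^{SO}_{n-2}(BSO_2)$. Taking degree-$n$ parts in the rational isomorphism
\[
\Cob^1_{SO}(*,k) \otimes \Q \cong \left( \Omega^{SO}_* \oplus \bigoplus_{j=2}^{k} \Omega^{SO}_{*-2}(BSO_2) \right) \otimes \Q
\]
gives $\rk \Cob^1_{SO}(n,k) = \rk \Omega^{SO}_n + (k-1)\rk \Omega^{SO}_{n-2}(BSO_2)$, so everything reduces to known computations.

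First I would invoke Thom's rational computation of oriented cobordism: $\Omega^{SO}_* \otimes \Q$ is a polynomial ring over $\Q$ on generators in degrees $4, 8, 12, \ldots$ (one generator in each positive degree divisible by $4$). Counting monomials of a fixed total degree $4m$ yields $\rk \Omega^{SO}_{4m} = \pi(m)$, and $\rk \Omega^{SO}_n = 0$ whenever $4 \nmid n$.

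Next I would compute $\Omega^{SO}_{n-2}(BSO_2) \otimes \Q$. Since $BSO_2 \simeq \C P^{\infty}$ has torsion-free homology concentrated in even degrees (one copy of $\Q$ in each even degree $\geq 0$), the Atiyah--Hirzebruch spectral sequence for oriented bordism collapses rationally (see Conner \cite{conner79}), giving
\[
\Omega^{SO}_{n-2}(BSO_2) \otimes \Q \cong \bigoplus_{p+q=n-2} H_p(BSO_2;\Q) \otimes_{\Q} (\Omega^{SO}_q \otimes \Q) \text{\,.}
\]
The only nonzero summands have $p$ even and $q = 4i$, so only summands with $p + 4i = n-2$ and $p \geq 0$ contribute, each of rank $\pi(i)$.

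Finally I would split into cases modulo $4$. If $n = 4m$, then $p = 4(m-i)-2$ is nonnegative exactly for $0 \leq i \leq m-1$, giving $\rk \Omega^{SO}_{n-2}(BSO_2) = \sum_{i=0}^{m-1} \pi(i)$; combined with $\rk \Omega^{SO}_{4m} = \pi(m)$, the total is $(k-1)\sum_{i=0}^{m-1}\pi(i) + \pi(m)$. If $n = 4m-2$, then $p = 4(m-1-i)$ is nonnegative exactly for $0 \leq i \leq m-1$, giving $\rk \Omega^{SO}_{n-2}(BSO_2) = \sum_{i=0}^{m-1} \pi(i)$; since $\rk \Omega^{SO}_n = 0$, the total is $(k-1)\sum_{i=0}^{m-1}\pi(i)$. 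If $n$ is odd, then $n-2$ is odd, so in every summand one of $p$ or $q$ is odd, hence both $\Omega^{SO}_n \otimes \Q$ and $\Omega^{SO}_{n-2}(BSO_2) \otimes \Q$ vanish. There is no real obstacle here; the only thing to be careful about is the indexing in the case analysis (in particular the $n=2$, $m=1$ edge case where the empty sum correctly gives rank zero from the $BSO_2$ factor, consistent with Theorem \ref{thm:main-2dim}).
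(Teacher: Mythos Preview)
Your argument is correct and follows exactly the same route as the paper's proof: invoke Theorem~\ref{thm:mod-isom} to reduce to $\rk\Omega^{SO}_n + (k-1)\rk\Omega^{SO}_{n-2}(BSO_2)$, then compute each piece via Thom's rational description of $\Omega^{SO}_*$ and the rational collapse of the Atiyah--Hirzebruch spectral sequence for $BSO_2\simeq\C P^\infty$.

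One small slip in your closing parenthetical, though it does not affect the proof: for $n=2$, $m=1$, the sum $\sum_{i=0}^{m-1}\pi(i)=\pi(0)=1$ is \emph{not} empty, so the $BSO_2$ contribution has rank $1$ and the total rank is $k-1$; this is exactly what Theorem~\ref{thm:main-2dim} says, since $\Cob^1_{SO}(2,k)\cong\Z^{k-1}$.
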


\begin{proof}
By Theorem \ref{thm:mod-isom} 
\[
\begin{aligned}
\rk \Cob^1_{SO}(n,k) &= \rk \left( \Omega^{SO}_n \bigoplus \left( \bigoplus_{j=2}^k \Omega^{SO}_{n-2}(BSO_2) \right) \right) \\
&= \rk \Omega^{SO}_n + (k-1) \rk \Omega^{SO}_{n-2}(BSO_2) \text{\,.}
\end{aligned}
\]
It is well-known that $\rk \Omega^{SO}_n = \pi(m)$ if $n = 4m$ and $\rk \Omega^{SO}_n = 0$ otherwise. 

Since the Atiyah--Hirzebruch spectral sequence for rational bordism collapses, we have
\[
\Omega^{SO}_{n-2}(BSO_2) \otimes \Q \cong \bigoplus_{i=0}^{n-2} H_{n-2-i}(BSO_2; \Q) \otimes \Omega^{SO}_i \cong \bigoplus_{i=0}^{\lfloor \frac{n-2}{4} \rfloor} H_{n-2-4i}(BSO_2; \Q) \otimes \Q^{\pi(i)} \text{\,.}
\] 
The homology of $BSO_2 = {\C}P^{\infty}$ is 
\[
H_i(BSO_2; \Q) \cong 
\begin{cases}
\Q & \text{if $i$ is even, } \\
0 & \text{if $i$ is odd. }
\end{cases}
\]
Therefore
\[
\rk \Omega^{SO}_{n-2}(BSO_2) =
\begin{cases}
\displaystyle \sum_{i=0}^{\lfloor \frac{n-2}{4} \rfloor} \pi(i) & \text{if $n$ is even, } \\
0 & \text{if $n$ is odd, }
\end{cases}
\]
and the statement of the theorem follows.
\end{proof}

\begin{thm} \label{thm:rk-o}
The rank of $\Cob^1_O(n,k)$ is given by
\[
\rk \Cob^1_O(n,k) = 
\begin{cases}
(k-1)\sum_{i=0}^{m-1}\pi(i) + \pi(m) & \text{if $n=4m$, } \\
0 & \text{otherwise. }
\end{cases}
\]
\end{thm}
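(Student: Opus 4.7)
The plan is to follow the strategy of Theorem~\ref{thm:rk-so} but with the extra work required by the non-orientability of $\gamma_O$. By Theorem~\ref{thm:mod-isom}(b),
\[
\rk \Cob^1_O(n,k) = \rk \Omega^{SO}_n + (k-1)\,\rk \Omega^{\gamma_O}_{n-2}(BO_2),
\]
and since $\rk \Omega^{SO}_n = \pi(m)$ for $n=4m$ and $0$ otherwise, it suffices to compute the rank of $\Omega^{\gamma_O}_{n-2}(BO_2)$.

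First I would invoke the Pontryagin--Thom construction (already used in the proof of Theorem~\ref{thm:mod-isom}) to identify $\Omega^{\gamma_O}_{n-2}(BO_2) \cong \widetilde{\Omega}^{SO}_n(T\gamma_O)$: a representative $u\colon N \to BO_2$ with an orientation of $TN \oplus u^*\gamma_O$ corresponds to the oriented total space $Eu^*\gamma_O$ together with its collapse map to the Thom space $T\gamma_O$. As in the proof of Theorem~\ref{thm:mod-isom}, the collapse of the rational Atiyah--Hirzebruch spectral sequence then gives
\[
\widetilde{\Omega}^{SO}_n(T\gamma_O) \otimes \Q \cong \bigoplus_{i} \widetilde{H}_{n-i}(T\gamma_O;\Q) \otimes (\Omega^{SO}_i \otimes \Q),
\]
so the task reduces to computing $\widetilde H_*(T\gamma_O;\Q)$.

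The main obstacle, and the step that makes this case genuinely different from the oriented one, is that $\gamma_O$ is not orientable, so the ordinary Thom isomorphism fails rationally and must be replaced by the twisted version $\widetilde{H}_n(T\gamma_O;\Q) \cong H_{n-2}(BO_2;\Q^w)$, where $\Q^w$ is the local coefficient system determined by $w_1(\gamma_O)$. I would compute the right-hand side using the orientation double cover $BSO_2 \to BO_2$, whose deck involution $\sigma$ is induced by $\tau \in O_2$ and acts on $BSO_2 = {\C}P^{\infty}$ as complex conjugation; with rational coefficients, $H_*(BO_2;\Q^w)$ is the $(-1)$-eigenspace of $\sigma_*$ on $H_*({\C}P^{\infty};\Q)$. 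Since complex conjugation on ${\C}P^n$ is orientation-preserving iff $n$ is even, $\sigma_*$ acts as $(-1)^n$ on $H_{2n}({\C}P^{\infty};\Q)$, giving $H_i(BO_2;\Q^w)=\Q$ for $i\equiv 2\pmod 4$ and $0$ otherwise. Consequently $\widetilde H_n(T\gamma_O;\Q)=\Q$ for $n \equiv 0 \pmod 4$ with $n>0$ and is $0$ otherwise; feeding this into the Atiyah--Hirzebruch formula yields $\rk \Omega^{\gamma_O}_{n-2}(BO_2) = \sum_{i=0}^{m-1}\pi(i)$ when $n=4m$ and $0$ otherwise, and adding $\rk\Omega^{SO}_{4m}=\pi(m)$ produces the stated rank formula.
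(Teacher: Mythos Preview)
Your proposal is correct and follows essentially the same route as the paper: reduce via Theorem~\ref{thm:mod-isom} to computing $\rk\Omega^{\gamma_O}_{n-2}(BO_2)$, pass to the Thom space, apply the rational Atiyah--Hirzebruch collapse, and then the twisted Thom isomorphism to reduce to $H_{*}(BO_2;\Q_w)$. The one substantive difference is the final step: the paper simply cites \v{C}adek for $H_*(BO_2;\Q_w)$, whereas you compute it directly via the double cover $BSO_2\to BO_2$ and the action of complex conjugation on $H_*({\C}P^{\infty};\Q)$. Your argument there is correct (since $\sigma^*c_1=-c_1$, the involution acts as $(-1)^n$ on $H_{2n}$, so the $(-1)$-eigenspace is concentrated in degrees $\equiv 2\pmod 4$), and it has the advantage of making the proof self-contained; the paper's citation, in turn, covers the integral twisted cohomology, which is more than is needed here. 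A minor cosmetic point: the paper works with $T\mu^j_*(\gamma_O)$ rather than $T\gamma_O$, but since $w_1(\mu^j_*(\gamma_O))=w_1(\gamma_O)$ the twisted Thom isomorphism yields the same twisted homology of $BO_2$, so this makes no difference.
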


\begin{proof}
By Theorem \ref{thm:mod-isom} we have $\rk \Cob^1_O(n,k) = \rk \Omega^{SO}_n + (k-1) \rk \Omega^{\gamma_O}_{n-2}(BO_2)$. 

Recall from the proof of Theorem \ref{thm:mod-isom} that $\Omega^{\gamma_O}_{n-2}(BO_2) \cong \widetilde{\Omega}^{SO}_n(T\mu^j_*(\gamma_O))$. Again we use the Atiyah--Hirzebruch spectral sequence to get
\[
\widetilde{\Omega}^{SO}_n(T\mu^j_*(\gamma_O)) \otimes \Q \cong \bigoplus_{i=0}^{n-1} H_{n-i}(T\mu^j_*(\gamma_O); \Q) \otimes \Omega^{SO}_i \cong \bigoplus_{i=0}^{\lfloor \frac{n-1}{4} \rfloor} H_{n-4i}(T\mu^j_*(\gamma_O); \Q) \otimes \Q^{\pi(i)} \text{\,.}
\] 
By the twisted oriented Thom-isomorphism (see Spanier \cite[Chapter 5., Exercise J6]{spanier66}) $H_i(T\mu^j_*(\gamma_O); \Q) \cong H_{i-2}(B\mu^j_*(\gamma_O); \Q_w) = H_{i-2}(BO_2; \Q_w)$, where $\Q_w$ denotes the orientation twisting of the coefficient group $\Q$. The twisted homology of $BO_2$ is known (see \v{C}adek \cite{cadek99}): 
\[
H_{i-2}(BO_2; \Q_w) \cong 
\begin{cases}
\Q & \text{if $i$ is divisible by $4$, } \\
0 & \text{otherwise. }
\end{cases}
\]
So we get 
\[
\rk \Omega^{\gamma_O}_{n-2}(BO_2) =
\begin{cases}
\displaystyle \sum_{i=0}^{\lfloor \frac{n-1}{4} \rfloor} \pi(i) & \text{if $n$ is divisible by $4$, } \\
0 & \text{otherwise, }
\end{cases}
\]
and the statement follows.
\end{proof}

As an example consider $\Cob^1_O(4,2)$. By Theorem \ref{thm:rk-o} $\rk \Cob^1_O(4,2) = 2$, and by Theorem \ref{thm:mod-isom} an isomorphism $\Cob^1_O(4,2) \otimes \Q \cong \left( \Omega^{SO}_4 \oplus \Omega^{\gamma_O}_2(BO_2) \right) \otimes \Q \cong \Q^2$ is given by $[f] \otimes 1 \mapsto ([M], [a_2]) \otimes 1$. 

Let $q \: {\C}P^2 \rightarrow S^4$ denote the quotient map of the action of $\Z_2$ on ${\C}P^2$ by conjugation. Then $[q]$ has infinite order in $\Cob^1_O(4,2)$, because a homomorphism $\Cob^1_O(4,2) \rightarrow \Z$ is defined by the signature of the source, and $\sgn({\C}P^2) = 1$. Since $[S^4] = 0 \in \Omega^{SO}_4$, we conclude that the cobordism class $[a_2^q] \in \Omega^{\gamma_O}_2(BO_2)$ of the map $a_2^q \: {\R}P^2 \rightarrow BO_2$ that induces the normal bundle of ${\R}P^2 \subset {\C}P^2$ has infinite order.

Let $t \: {\C}P^2 \bigsqcup {\C}P^2 \rightarrow {\C}P^2$ denote the trivial covering. Then $[{\C}P^2] \in \Omega^{SO}_4$ has infinite order, and $[a_2^t] = 0 \in \Omega^{\gamma_O}_2(BO_2)$, because $t$ has no singular points. So we have proved the following: 

\begin{prop}
The elements $[t] \otimes 1$ and $[q] \otimes 1$ form a basis of $\Cob^1_O(4,2) \otimes \Q$.
\end{prop}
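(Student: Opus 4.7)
The plan is to apply the isomorphism
\[
G \otimes \id_\Q \: \Cob^1_O(4,2) \otimes \Q \xrightarrow{\cong} \bigl( \Omega^{SO}_4 \oplus \Omega^{\gamma_O}_2(BO_2) \bigr) \otimes \Q
\]
supplied by Theorem \ref{thm:mod-isom} and to show that the images of $[t] \otimes 1$ and $[q] \otimes 1$ form a basis of the right-hand side. By Theorem \ref{thm:rk-o} the codomain has rank $2$ (one in each summand), so it suffices to verify that each of the two images lies on a distinct coordinate axis and is nonzero there.

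First I would compute the coordinates. Since $t$ is an unramified covering, its singular submanifold is empty, so $[a_2^t] = 0$; the target of $t$ is ${\C}P^2$, so $G([t]) = ([{\C}P^2], 0)$. Since the target of $q$ is $S^4 = \partial D^5$, we have $[S^4] = 0 \in \Omega^{SO}_4$, so $G([q]) = (0, [a_2^q])$. Linear independence will now reduce to showing that $[{\C}P^2] \otimes 1$ is nonzero in $\Omega^{SO}_4 \otimes \Q$ and that $[a_2^q] \otimes 1$ is nonzero in $\Omega^{\gamma_O}_2(BO_2) \otimes \Q$.

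Nonvanishing of $[{\C}P^2] \otimes 1$ is classical: the signature is a nonzero homomorphism $\Omega^{SO}_4 \to \Z$ with $\sgn({\C}P^2) = 1$. For the second coordinate, I would use the homomorphism $\sigma \: \Cob^1_O(4,2) \to \Z$, $[f \: \wtilde{M} \to M] \mapsto \sgn(\wtilde{M})$, which is well-defined because cobordant oriented $4$-manifolds have equal signatures. Evaluating on $q$ gives $\sigma([q]) = \sgn({\C}P^2) = 1$, so $[q] \otimes 1 \neq 0$; since its first coordinate under $G$ vanishes, the second coordinate $[a_2^q] \otimes 1$ must be nonzero. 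No serious obstacle arises here: all the heavy machinery is in place via Theorems \ref{thm:cob-isom}, \ref{thm:mod-isom} and \ref{thm:rk-o}, and the argument reduces to checking that the two images populate different coordinate axes of $\Q^2$.
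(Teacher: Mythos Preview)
Your proof is correct and follows essentially the same line as the paper: both compute $G([t]) = ([{\C}P^2],0)$ and $G([q]) = (0,[a_2^q])$, use the signature of ${\C}P^2$ to see that $[{\C}P^2]$ has infinite order in $\Omega^{SO}_4$, and use the signature of the source manifold as a homomorphism $\Cob^1_O(4,2) \to \Z$ to conclude that $[q]$ (and hence $[a_2^q]$) has infinite order.
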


\subsection{Branched coverings in dimension $2$}

\begin{lem} \label{lem:3dim}
If $i \leq 3$, then $\Omega^{SO}_i(X)\cong H_i(X)$ for any space $X$.
\end{lem}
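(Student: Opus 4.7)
The plan is to apply the Atiyah--Hirzebruch spectral sequence for oriented bordism, whose $E^2$-page is $E^2_{p,q} = H_p(X; \Omega^{SO}_q)$ and which converges to $\Omega^{SO}_{p+q}(X)$. The key input is the low-dimensional oriented cobordism ring: $\Omega^{SO}_0 \cong \Z$ and $\Omega^{SO}_1 = \Omega^{SO}_2 = \Omega^{SO}_3 = 0$. Consequently, on any total-degree diagonal $p + q = i$ with $i \leq 3$, the only possibly nonzero $E^2$-entry is the one with $q = 0$, namely $H_i(X; \Z) = H_i(X)$.

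Next I would verify that this entry $E^2_{i,0}$ is unaffected by differentials. Differentials landing in $E^r_{i,0}$ originate from $E^r_{i+r,\, 1-r}$ for $r \geq 2$, but the second index $1-r \leq -1$ forces these groups to vanish. Differentials out of $E^r_{i,0}$ go to $E^r_{i-r,\, r-1}$ for $r \geq 2$: for $r \in \{2,3,4\}$ the second index $r-1 \in \{1,2,3\}$ gives $\Omega^{SO}_{r-1} = 0$, while for $r \geq 5$ the first index $i-r \leq i - 5 < 0$ kills the target. Hence $E^\infty_{i,0} = E^2_{i,0} = H_i(X)$, and since this is the unique nonzero entry on the $i$-th diagonal there is no extension problem.

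The edge homomorphism $\Omega^{SO}_i(X) \to E^\infty_{i,0} = H_i(X)$ is then an isomorphism, and it is given explicitly by $[M, f] \mapsto f_*[M]$, where $[M] \in H_i(M)$ is the fundamental class of the oriented closed $i$-manifold $M$. Since all the input ingredients (low-dimensional values of $\Omega^{SO}_*$, the AHSS, naturality of the edge map) are standard, there is no serious obstacle; the only thing to be careful about is the bookkeeping that rules out the relevant differentials, which is handled by the parity and vanishing argument above.
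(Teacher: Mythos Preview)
Your proof is correct and follows essentially the same approach as the paper: the paper's proof simply says that the statement follows from the Atiyah--Hirzebruch spectral sequence together with $\Omega^{SO}_0 \cong \Z$ and $\Omega^{SO}_i \cong 0$ for $1 \leq i \leq 3$. You have merely spelled out the differential and extension bookkeeping in more detail than the paper does.
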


\begin{proof}
This follows from the Atiyah--Hirzebruch spectral sequence, and the fact that $\Omega^{SO}_0 \cong \Z$ and $\Omega^{SO}_i \cong 0$ for $1 \leq i \leq 3$.
\end{proof}

In what follows we will use Lemma \ref{lem:3dim} without explicitly mentioning it.

\begin{thm} \label{thm:2dim-group-so}
The cobordism group of $2$-dimensional $k$-fold oriented branched coverings is $\Cob^1_{SO}(2,k) \cong \Z^{k-1}$.
\end{thm}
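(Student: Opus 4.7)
The plan is to identify $\Cob^1_{SO}(2,k)$ with the integral homology group $H_2(B^1_{SO}(k);\Z)$ via Theorem~\ref{thm:cob-isom} and Lemma~\ref{lem:3dim}, and then to compute this homology by analyzing the long exact sequence of the pair $(B^1_{SO}(k), B^0(k))$. Excision applied to the cofibration $B^0(k) \hookrightarrow B^1_{SO}(k)$ from Definition~\ref{def:univ1}, together with the oriented Thom isomorphism for each bundle $\mu^j_*(\gamma_{SO})$ and the K\"unneth formula, gives $H_i(B^1_{SO}(k), B^0(k)) \cong \bigoplus_{j=2}^{k} H_{i-2}(BSO_2 \times B^0(k-j))$. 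In particular $H_2 \cong \Z^{k-1}$ and $H_3 \cong (\Z/2)^{\max(k-3, 0)}$, using $H_1(BSO_2) = 0$ and $H_1(S_m) = \Z/2$ for $m \geq 2$. The absolute groups $H_1(B^0(k)) = H_1(S_k) = \Z/2$ (for $k \geq 2$) and $H_2(B^0(k)) = H_2(S_k)$, which is $0$ for $k \leq 3$ and $\Z/2$ for $k \geq 4$, are standard.

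Next I would check that $B^1_{SO}(k)$ is simply connected (as already alluded to in the introduction). The $j = 2$ gluing attaches fibre disks whose boundary circles represent the transposition $(1\,2)$ in $\pi_1(B^0(k)) = S_k$; van Kampen imposes $(1\,2) = 1$ in $\pi_1(B^1_{SO}(k))$, and since all transpositions are conjugate in $S_k$ and they generate $S_k$, this collapses $S_k$ to the trivial group. Hence $\pi_1(B^1_{SO}(k)) = 1$ and $H_1(B^1_{SO}(k)) = 0$. The long exact sequence then forces $\delta_2 \colon H_2(B^1_{SO}(k), B^0(k)) = \Z^{k-1} \twoheadrightarrow H_1(S_k) = \Z/2$ to be surjective, so its kernel is an index-$2$ subgroup of $\Z^{k-1}$, hence itself isomorphic to $\Z^{k-1}$.

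For $k \leq 3$ we have $H_2(S_k) = 0$, and the long exact sequence immediately yields $H_2(B^1_{SO}(k)) \cong \ker \delta_2 \cong \Z^{k-1}$. For $k \geq 4$ the Schur multiplier $H_2(S_k) = \Z/2$ is nontrivial, and the remaining step is to show $\delta_3 \colon H_3(B^1_{SO}(k), B^0(k)) \to H_2(S_k)$ is surjective. I plan to trace the generator of the $j = 2$ summand through Thom and K\"unneth: it is represented by (Thom class of $\mu^2_*(\gamma_{SO})$) $\times$ $[\alpha]$, where $\alpha$ is a loop in $B^0(k-2)$ realising a transposition in $S_{k-2}$. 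Taking boundary and pushing forward by $r_2$, the image in $H_2(S_k)$ is the class associated, via the Kunneth map $H_1(\Z/2) \otimes H_1(S_{k-2}) \to H_2(S_k)$, to the commuting pair of disjoint transpositions $(1\,2)$ and $(3\,4)$. This class generates the Schur multiplier $H_2(S_k) = \Z/2$, as is detected by the fact that the lifts of disjoint transpositions anticommute in the double cover $\tilde S_k$. Therefore $\delta_3$ is surjective, the map $H_2(S_k) \to H_2(B^1_{SO}(k))$ vanishes, and the long exact sequence yields $H_2(B^1_{SO}(k)) \cong \Z^{k-1}$.

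The main obstacle will be this last step: explicitly identifying the boundary of the $j = 2$ relative 3-cycle under the gluing map $r_2$ inside $H_2(S_k)$, and matching it with a standard generator of the Schur multiplier described via disjoint transpositions. For $k \leq 3$ this difficulty does not arise, and the result is essentially immediate from the Thom computation plus the simple connectivity of $B^1_{SO}(k)$.
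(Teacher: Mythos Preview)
Your proposal is correct and follows essentially the same route as the paper: identify $\Cob^1_{SO}(2,k)$ with $H_2(B^1_{SO}(k))$, run the long exact sequence of $(B^1_{SO}(k),B^0(k))$, compute the relative groups via the Thom isomorphism and K\"unneth, and then show that the connecting map $\partial_3$ hits the generator of $H_2(S_k)\cong\Z/2$ via the $j=2$ summand. The only differences are cosmetic: the paper does not invoke simple connectivity to handle $\partial_2$ but simply observes that any homomorphism $\Z^{k-1}\to\Z/2$ has kernel isomorphic to $\Z^{k-1}$; and for the surjectivity of $\partial_3$ the paper cites Nakaoka's description of the Pontryagin product $m_*(\ell_2\times\ell_{k-2})$ in $H_*(BS_k)$ rather than your Schur-multiplier/double-cover argument, but these are two standard ways of identifying the same nonzero class.
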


\begin{proof}
By Theorem \ref{thm:cob-isom} $\Cob^1_{SO}(2,k) \cong \Omega^{SO}_2(B^1_{SO}(k)) \cong H_2(B^1_{SO}(k))$. We have the following exact sequence: 
\[
\begin{aligned}
\xymatrix{
H_3 \bigl( B^1_{SO}(k), B^0(k) \bigr) \ar[r]^-{\partial_3} & H_2(B^0(k)) \ar[r]^-{\alpha} & H_2(B^1_{SO}(k)) \ar[r]^-{\beta} & H_2 \bigl( B^1_{SO}(k), B^0(k) \bigr) 
}
\quad
\\
\xymatrix{
\ar[r]^-{\partial_2} & H_1(B^0(k))
}
\end{aligned}
\]
Here $B^1_{SO}(k) = B^0(k) \bigcup \bigl( \bigsqcup_{j=2}^k E\mu^j_*(\gamma_{SO}) \times B^0(k - j) \bigr)$ (by Definition \ref{def:univ1}), therefore 
\[
\begin{aligned}
H_i \bigl( B^1_{SO}(k), B^0(k) \bigr) &\cong \bigoplus_{j=2}^k H_i \bigl( E\mu^j_*(\gamma_{SO}) \times B^0(k - j) , S\mu^j_*(\gamma_{SO}) \times B^0(k - j) \bigr) \\
&\cong \bigoplus_{j=2}^k \bigoplus_{\substack{a, b \geq 0 \\ a+b=i}} H_a \bigl( E\mu^j_*(\gamma_{SO}) , S\mu^j_*(\gamma_{SO}) \bigr) \otimes H_b(B^0(k - j)) \\
&\cong \bigoplus_{j=2}^k \bigoplus_{\substack{a, b \geq 0 \\ a+b=i}} H_{a-2}(B\gamma_{SO}) \otimes H_b(B^0(k - j)) \\
&\cong \bigoplus_{j=2}^k \bigoplus_{\substack{a, b \geq 0 \\ a+b=i-2}} H_a(B\gamma_{SO}) \otimes H_b(B^0(k - j)) \text{\,.}
\end{aligned}
\]
Here we used the K\"unneth-formula (note that $H_*(B\gamma_{SO})$ is torsion-free) and the Thom-isomorphism.

Since $H_0(B\gamma_{SO}) \cong \Z$ and $H_1(B\gamma_{SO}) \cong 0$, this implies that for $i=2, 3$
\[
H_i \bigl( B^1_{SO}(k), B^0(k) \bigr) \cong \bigoplus_{j=2}^k H_{i-2}(B^0(k - j)) \text{\,.} 
\]
If $g \: M \rightarrow B^0(k - j)$ represents an element 
\[
[g] \in \Omega^{SO}_{i-2}(B^0(k - j)) \cong H_{i-2}(B^0(k - j)) \leq \bigoplus_{j=2}^k H_{i-2}(B^0(k - j)) \text{\,,}
\]
then the corresponding element in $H_i \bigl( B^1_{SO}(k), B^0(k) \bigr) \cong \Omega^{SO}_i \bigl( B^1_{SO}(k), B^0(k) \bigr)$ is represented by the composition
\[
\begin{aligned}
\xymatrix{
(D^2 \times M, S^1 \times M) \ar[r]^-{i_j \times g} & \bigl( E\mu^j_*(\gamma_{SO}) \times B^0(k - j), S\mu^j_*(\gamma_{SO}) \times B^0(k - j) \bigr) 
}
\quad
\\
\xymatrix{
\ar[r] & \bigl( B^1_{SO}(k), B^0(k) \bigr) \text{\,,} 
}
\end{aligned}
\]
where $i_j \: (D^2, S^1) \hookrightarrow \bigl( E\mu^j_*(\gamma_{SO}), S\mu^j_*(\gamma_{SO}) \bigr)$ is the inclusion of a fibre. 

So the previous exact sequence can be rewritten in the following form: 
\[
\begin{aligned}
\xymatrix{
\displaystyle \bigoplus_{j=2}^k H_1(B^0(k - j)) \ar[r]^-{\partial_3} & H_2(B^0(k)) \ar[r]^-{\alpha} &  H_2(B^1_{SO}(k)) \ar[r]^-{\beta} & \displaystyle \bigoplus_{j=2}^k H_0(B^0(k - j)) 
}
\quad
\\
\xymatrix{
\ar[r]^-{\partial_2} & H_1(B^0(k)) 
}
\end{aligned}
\]
The boundary map $\partial_i$ sends $[g] \in \Omega^{SO}_{i-2}(B^0(k - j)) \cong H_{i-2}(B^0(k - j))$, represented by $g \: M \rightarrow B^0(k - j)$, to the element in $H_{i-1}(B^0(k)) \cong \Omega^{SO}_{i-1}(B^0(k))$ represented by the composition 
\[
\xymatrix{
S^1 \times M \ar[r]^-{i_j \times g} & S\mu^j_*(\gamma_{SO}) \times B^0(k - j) \ar[r]^-{r_j} & B^0(k) \text{\,.} 
}
\]

Let $i'_j \: D^2 \rightarrow E\mu^j_*(\gamma_{SO}) \times B^0(k - j)$ denote the composition of $i_j$ and the inclusion $E\mu^j_*(\gamma_{SO}) \hookrightarrow E\mu^j_*(\gamma_{SO}) \times B^0(k - j)$. Let $s_j = r_j \circ i'_j \big| _{S^1} \: S^1 \rightarrow B^0(k) = BS_k$, it represents a $j$-cycle in $\pi_1(BS_k) = S_k$.
Let $m \: BS_j \times BS_{k-j} \rightarrow BS_k$ be the map that induces the ``multiplication" $S_j \times S_{k-j} \rightarrow S_k$ on the fundamental groups, then $\partial_i([g]) = m_*([s_j] \times [g])$.

The kernel of the Hurewicz-homomorphism $\pi_1(BS_j) \cong S_j \rightarrow H_1(BS_j) \cong \Z_2$ (where $j \geq 2$) is the alternating group $A_j < S_j$. A $j$-cycle is in $A_j$ if and only if $j$ is odd. Hence $[s_j] = 0$ if and only if $j$ is odd. So if $\ell_j$ denotes the nonzero element in $H_1(BS_j) \cong \Z_2$, then 
\[
\partial_i([g]) = 
\begin{cases}
0 & \text{if $j$ is odd, } \\
m_*(\ell_j \times [g]) & \text{if $j$ is even. }
\end{cases}
\]

Now we return to the original exact sequence. The homology of the spaces $BS_j$ and the homology multplication $m_*$ are known (see Nakaoka \cite{nakaoka60}, \cite{nakaoka61}). In particular 
\[
H_i(B^0(j)) = H_i(BS_j) \cong 
\begin{cases}
\Z & \text{if $i=0$, } \\
0 & \text{if $i=1$, $j \leq 1$, } \\
\Z_2 & \text{if $i=1$, $j \geq 2$, } \\
0 & \text{if $i=2$, $j \leq 3$, } \\
\Z_2 & \text{if $i=2$, $j \geq 4$. }
\end{cases}
\]
So the exact sequence can be further rewritten as 
\[
\begin{aligned}
\xymatrix{
0 \ar[r]^-{\partial_3} & 0 \ar[r]^-{\alpha} &  H_2(B^1_{SO}(k)) \ar[r]^-{\beta} & \Z^{k-1} \ar[r]^-{\partial_2} & \Z_2 & & \text{if $k \leq 3$,\ }
}
\\
\xymatrix{
\Z_2^{k-3} \ar[r]^-{\partial_3} & \Z_2 \ar[r]^-{\alpha} &  H_2(B^1_{SO}(k)) \ar[r]^-{\beta} & \Z^{k-1} \ar[r]^-{\partial_2} & \Z_2 & & \text{if $k \geq 4$.\ }
}
\end{aligned}
\]

In both cases $\Image \beta = \Ker \partial_2 \cong \Z^{k-1}$, because $\Image \partial_2$ is a torsion group. 

If $k \leq 3$, then $\alpha = 0$, so $\beta$ is injective, so $H_2(B^1_{SO}(k)) \cong \Image \beta \cong \Z^{k-1}$. 

If $k \geq 4$, then $\Ker \beta = \Image \alpha \cong \Z_2 / \Ker \alpha = \Z_2 / \Image \partial_3$. If $2\leq j \leq k-2$, then $H_1(B^0(k - j)) \cong \Z_2$ is generated by $\ell_{k-j}$, and if $j$ is even, then $\partial_3(\ell_{k-j}) = m_*(\ell_j \times \ell_{k-j})$.

The product $m_*(\ell_2 \times \ell_2)$ is the generator of $H_2(BS_4)$. In the commutative diagram 
\[
\xymatrix{
H_2(BS_2 \times BS_2) \ar[r]^-{m_*} \ar[d]_{\gamma} & H_2(BS_4) \ar[d]^{\delta} \\
H_2(BS_2 \times BS_{k-2}) \ar[r]^-{m_*} & H_2(BS_k)
}
\]
$\gamma(\ell_2 \times \ell_2) = \ell_2 \times \ell_{k-2}$ and $\delta$ maps the generator of $H_2(BS_4)$ into the generator of $H_2(BS_k)$. Therefore $m_*(\ell_2 \times \ell_{k-2})$ is the generator of $H_2(BS_k)$.

Therefore $\partial_3(\ell_{k-2})$ is the generator, so $\Image \partial_3 = \Z_2$. So $\Ker \beta = 0$ and $H_2(B^1_{SO}(k)) \cong \Z^{k-1}$.
\end{proof}

\begin{thm} \label{thm:2dim-group-o}
The cobordism group of $2$-dimensional $k$-fold unoriented branched coverings is $\Cob^1_O(2,k) \cong \Z_2^{k-2}$.
\end{thm}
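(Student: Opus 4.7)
The plan is to parallel the proof of Theorem \ref{thm:2dim-group-so}. By Theorem \ref{thm:cob-isom} and Lemma \ref{lem:3dim}, $\Cob^1_O(2,k) \cong H_2(B^1_O(k))$, and I would analyse this via the long exact sequence of the pair $\bigl( B^1_O(k), B^0(k) \bigr)$:
\[
H_3 \bigl( B^1_O(k), B^0(k) \bigr) \xrightarrow{\partial_3} H_2(B^0(k)) \xrightarrow{\alpha} H_2(B^1_O(k)) \xrightarrow{\beta} H_2 \bigl( B^1_O(k), B^0(k) \bigr) \xrightarrow{\partial_2} H_1(B^0(k)) \,.
\]

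First I would compute the relative homology, as in the oriented case, using excision and the K\"unneth formula. The key difference is that $\mu^j_*(\gamma_O)$ is non-orientable (because $\mu^j(\tau) = \tau$ has determinant $-1$), so the Thom isomorphism is twisted: $H_a \bigl( E\mu^j_*(\gamma_O), S\mu^j_*(\gamma_O) \bigr) \cong H_{a-2}(BO_2; \Z_w)$, where $\Z_w$ is the orientation local system of $\mu^j_*(\gamma_O)$. A short cellular computation (or the Serre spectral sequence for $BSO_2 \to BO_2 \to B\Z_2$) gives $H_0(BO_2; \Z_w) \cong \Z_2$ and $H_1(BO_2; \Z_w) = 0$. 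Combined with $H_0(B^0(k-j)) = \Z$ and $H_1(B^0(k-j)) = \Z_2$ (for $k-j \geq 2$; otherwise $0$), this yields $H_2 \bigl( B^1_O(k), B^0(k) \bigr) \cong \Z_2^{k-1}$ and $H_3 \bigl( B^1_O(k), B^0(k) \bigr) \cong \Z_2^{\max(k-3, 0)}$, with all Tor terms vanishing because the only nonzero $H_a(E\mu^j_*(\gamma_O), S\mu^j_*(\gamma_O))$ in the relevant range is $\Z_2$ in degree $2$.

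Next, the description of $\partial_i$ in terms of the gluing maps $r_j$ from the oriented proof carries over verbatim: a generator of the $j$-th $\Z_2$ summand of $H_2 \bigl( B^1_O(k), B^0(k) \bigr)$, represented by the inclusion of a fibre disk $(D^2, S^1) \hookrightarrow (E\mu^j_*(\gamma_O), S\mu^j_*(\gamma_O))$, is sent by $\partial_2$ to the class $[s_j] \in H_1(B^0(k)) = \Z_2$ of a $j$-cycle in $S_k$, which is nonzero iff $j$ is even. Hence $\partial_2$ is surjective (via $j = 2$) and $\ker \partial_2 \cong \Z_2^{k-2}$. For $k \geq 4$, $H_2(B^0(k)) \cong \Z_2$ is generated by $m_*(\ell_2 \times \ell_{k-2})$, and the same commutative-square argument as in the oriented proof shows that the element $\ell_{k-2}$ in the $j = 2$ summand of $H_3$ is mapped by $\partial_3$ to this generator; so $\alpha = 0$, $\beta$ is injective, and $H_2(B^1_O(k)) \cong \ker \partial_2 \cong \Z_2^{k-2}$. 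For $k = 2, 3$ the same conclusion is immediate since $H_2(B^0(k)) = 0$.

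The main obstacle will be setting up the twisted Thom isomorphism correctly for the non-orientable bundles $\mu^j_*(\gamma_O)$ with integer coefficients, and verifying that the geometric descriptions of $\partial_2$ and $\partial_3$ via the gluing data transfer from the oriented setting without change; once this is done, the remaining arithmetic is essentially a mod-$2$ echo of the oriented computation.
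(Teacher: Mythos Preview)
Your proposal is correct and follows essentially the same route as the paper's proof: the same long exact sequence of the pair $(B^1_O(k),B^0(k))$, the twisted Thom isomorphism giving $H_i(B^1_O(k),B^0(k))\cong\bigoplus_{j=2}^k H_{i-2}(B^0(k-j))\otimes\Z_2$ for $i=2,3$, the same geometric description of $\partial_i$ via the gluing maps, surjectivity of $\partial_2$ through the $j=2$ summand, and the commutative-square argument showing $\partial_3$ hits the generator of $H_2(BS_k)$ so that $\alpha=0$. The only place to tighten is your justification for the vanishing Tor term in K\"unneth: it is not that the relative group is $\Z_2$, but rather that for $i\leq 3$ the only Tor contribution would be $\mathrm{Tor}(H_2(E\mu^j_*(\gamma_O),S\mu^j_*(\gamma_O)),H_0(B^0(k-j)))=\mathrm{Tor}(\Z_2,\Z)=0$.
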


\begin{proof}
The proof is based on a modified version of the exact sequence used in the proof of Theorem \ref{thm:2dim-group-so}, in which $B^1_{SO}(k)$ is replaced by $B^1_O(k)$. For $i \leq 3$ we have
\[
\begin{aligned}
H_i \bigl( B^1_O(k), B^0(k) \bigr) &\cong \bigoplus_{j=2}^k H_i \bigl( E\mu^j_*(\gamma_O) \times B^0(k - j) , S\mu^j_*(\gamma_O) \times B^0(k - j) \bigr) \\
&\cong \bigoplus_{j=2}^k \bigoplus_{\substack{a, b \geq 0 \\ a+b=i}} H_a \bigl( E\mu^j_*(\gamma_O) , S\mu^j_*(\gamma_O) \bigr) \otimes H_b(B^0(k - j)) \\
&\cong \bigoplus_{j=2}^k \bigoplus_{\substack{a, b \geq 0 \\ a+b=i}} H_{a-2}(B\gamma_O; \Z_w) \otimes H_b(B^0(k - j)) \\
&\cong \bigoplus_{j=2}^k \bigoplus_{\substack{a, b \geq 0 \\ a+b=i-2}} H_a(B\gamma_O; \Z_w) \otimes H_b(B^0(k - j)) \text{\,.}
\end{aligned}
\]
We used the K\"unneth-formula (note that $H_a \bigl( E\mu^j_*(\gamma_O) , S\mu^j_*(\gamma_O) \bigr) \cong 0$ for $a < 2$ and $H_0(B^0(k - j)) \cong \Z$, so the torsion term vanishes for $i \leq 3$) and the twisted Thom-isomorphism. 

Since $H_0(B\gamma_O; \Z_w) \cong \Z_2$ and $H_1(B\gamma_O; \Z_w) \cong 0$, this implies that for $i=2, 3$
\[
H_i \bigl( B^1_O(k), B^0(k) \bigr) \cong \bigoplus_{j=2}^k H_{i-2}(B^0(k - j)) \otimes \Z_2 \text{\,.} 
\]

Again we can deduce the formula $\partial_i([g] \otimes 1) = m_*([s_j] \times [g])$ for the boundary map $\partial_i$, where $[g] \in H_{i-2}(B^0(k - j))$ and $i=2, 3$. Therefore 
\[
\partial_i([g] \otimes 1) = 
\begin{cases}
0 & \text{if $j$ is odd, } \\
m_*(\ell_j \times [g]) & \text{if $j$ is even. }
\end{cases}
\]

So now the exact sequence is
\[
\begin{aligned}
\xymatrix{
0 \ar[r]^-{\partial_3} & 0 \ar[r]^-{\alpha} &  H_2(B^1_O(k)) \ar[r]^-{\beta} & \Z_2^{k-1} \ar[r]^-{\partial_2} & \Z_2 & & \text{if $k \leq 3$,\ }
}
\\
\xymatrix{
\Z_2^{k-3} \ar[r]^-{\partial_3} & \Z_2 \ar[r]^-{\alpha} &  H_2(B^1_O(k)) \ar[r]^-{\beta} & \Z_2^{k-1} \ar[r]^-{\partial_2} & \Z_2 & & \text{if $k \geq 4$.\ }
}
\end{aligned}
\]
Again $\alpha = 0$ in both cases, so $H_2(B^1_O(k)) \cong \Image \beta \cong \Ker \partial_2$. The map $\partial_2$ is surjective, because if $1_j$ denotes the generator of $H_0(B^0(j))$ (represented by a point), then $\partial_2(1_{k-2} \otimes 1) = m_*(\ell_2 \times 1_{k-2}) = \ell_k$ is the generator of $H_1(B^0(k))$. Therefore $H_2(B^1_O(k)) \cong \Z_2^{k-2}$. 
\end{proof}

\begin{defin}
If $f$ is an oriented $k$-fold branched covering between $2$-dimensional manifolds and $2 \leq j \leq k$, then let $c_j(f) \in \Z$ denote the (algebraic) number of singular points of $f$ of type $\zj$ (recall that the singular points are oriented, ie.\ they have a sign). If $f$ is unoriented, then let $c_j(f) \in \Z_2$ denote the parity of the number of $\zj$ type points.
\end{defin}

\begin{thm} \label{thm:invar}
The numbers $c_j(f)$ are cobordism invariants. The combined map $c = (c_2, c_3, \ldots , c_k ) \: \Cob^1_{SO}(2,k) \rightarrow \Z^{k-1}$ or $\Cob^1_O(2,k) \rightarrow \Z_2^{k-1}$ is an injective homomorphism. The subgroup $\Image c$ has index $2$, and it is determined by the condition that $\sum_i c_{2i}$ is even (or $0$ in the unoriented case). 
\end{thm}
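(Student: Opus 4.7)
The plan is to identify the map $c$ with the composition $\beta \circ H$, where $H \: \Cob^1(2,k) \xrightarrow{\cong} H_2(B^1(k))$ combines the isomorphism of Theorem \ref{thm:cob-isom} with Lemma \ref{lem:3dim}, and $\beta \: H_2(B^1(k)) \to H_2(B^1(k), B^0(k))$ is the map in the long exact sequence of the pair already analyzed in the proofs of Theorems \ref{thm:2dim-group-so} and \ref{thm:2dim-group-o}. In those proofs $H_2(B^1(k), B^0(k))$ is identified with $\Z^{k-1}$ (resp.\ $\Z_2^{k-1}$), the map $\beta$ is shown to be injective, and $\Image \beta = \Ker \partial_2$ is the index-$2$ subgroup characterized by the parity condition on the coordinates with even index. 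Thus, once we know $c = \beta \circ H$, all four claims of the theorem follow at once.

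First I would verify that each $c_j(f)$ is a cobordism invariant. If $h \: \widetilde{W} \to W$ is a cobordism between $f_1$ and $f_2$, then its $j$-th singular submanifold $\widetilde{V}_j(h) \subset \widetilde{W}$ is a compact $1$-manifold (oriented or unoriented as appropriate) with boundary $\widetilde{V}_j(f_1) \sqcup \widetilde{V}_j(f_2)$. Its components are arcs and circles; the two boundary points of each arc carry opposite signs in the oriented case and cancel in parity in the unoriented case, so $c_j(f_1) = c_j(f_2)$ in $\Z$ or in $\Z_2$ respectively. Additivity under disjoint union then makes $c$ a homomorphism.

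The main step is the identification $\beta \circ H = c$. Given a branched covering $f$ with generic inducing map $u \: M \to B^1(k)$, the $j$-th component of $\beta([u])$ under the Thom-isomorphism used in the earlier proofs is computed by the transverse intersection of $u$ with the zero-section $B\mu^j_*(\gamma) \times B^0(k-j)$; by genericity and by Proposition \ref{prop:ind} this intersection is exactly the singular locus $V_j \subset M$, and the signed (or mod-$2$) count of its points equals $c_j(f)$. The main obstacle is to verify that the sign conventions from the Thom-isomorphism match the definition of $c_j$: this amounts to tracing through how the orientations of $\wtilde{M}$, of the normal bundle $\tilde{\xi}_j$, and of the Thom class of $\mu^j_*(\gamma_{SO})$ interact, and similarly in the twisted unoriented case. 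Once this bookkeeping is carried out, the theorem is immediate: $c = \beta \circ H$ is injective with image equal to $\Ker \partial_2$, the index-$2$ subgroup characterized by $\sum_i c_{2i}$ being even (or zero in the unoriented case).
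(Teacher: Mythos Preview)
Your proposal is correct and follows essentially the same route as the paper: both arguments show cobordism invariance directly via the $1$-dimensional singular cobordism, identify $c$ with $\beta\circ H$ through the transverse-intersection interpretation of the Thom isomorphism, and then read off injectivity and the image from the exact sequence analysed in Theorems \ref{thm:2dim-group-so} and \ref{thm:2dim-group-o}. One small caveat: the explicit identification of $\Ker\partial_2$ with the ``$\sum_i c_{2i}$ even'' subgroup is not fully carried out in those earlier proofs---there only the formula $\partial_2(1_{k-j})=m_*(\ell_j\times 1_{k-j})$ (vanishing for odd $j$) is established, so you still need the one-line observation that $m_*(\ell_j\times 1_{k-j})=\ell_k$ for every even $j$, not just $j=2$; this is immediate from the effect of $S_j\hookrightarrow S_k$ on abelianisations.
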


\begin{rem}
It follows from the Riemann--Hurwitz formula that the condition that $\sum_i c_{2i}$ is even (or $0$) is necessary.
\end{rem}

\begin{proof}
If $F$ is a cobordism between the branched coverings $f$ and $g$, then the $V_j$ set of $F$ is an oriented/unoriented cobordism between those of $f$ and $g$, therefore $c_j(f) = c_j(g)$.

Next we will prove that $c$ corresponds to the map $\beta$ from the proofs of Theorems \ref{thm:2dim-group-so} and \ref{thm:2dim-group-o}. This implies that $c$ is an injective homomorphism. 

Let $f \: \wtilde{M} \rightarrow M$ be a branched covering between $2$-dimensional manifolds. By Theorem \ref{thm:every} there is a map $u \: M \rightarrow B^1(k)$ that induces $f$ from $p^1(k)$. Then $[u] \in \Omega^{SO}_2(B^1(k)) \cong H_2(B^1(k))$ is the element corresponding to $[f] \in \Cob^1(2,k)$ (see Theorem \ref{thm:cob-isom}). The set of singular points of $f$ of type $\zj$ is $u^{-1}(B\gamma \times B^0(k - j))$, \linebreak therefore $c_j(f)$ is the intersection number of $u(M)$ and $B\gamma \times B^0(k - j)$. This intersection number is the same as the component of $\beta([u])$ in $H_0(B^0(k - j))$ or $H_0(B^0(k - j)) \otimes \Z_2$. Therefore $c(f) = \beta([u])$. 

Finally we show that $\Image c$ is the subgroup determined by the condition that $\sum_i c_{2i}$ is even. In the proofs of Theorems \ref{thm:2dim-group-so} and \ref{thm:2dim-group-o} we saw that $\Image c = \Image \beta = \Ker \partial_2$. By the description of $\partial_2$, if $j$ is odd, then $\partial_2(1_{k-j}) = 0$ (and $\partial_2(1_{k-j} \otimes 1) = 0$), and if $j$ is even, then $\partial_2(1_{k-j}) = m_*(\ell_j \times 1_{k-j}) = \ell_k$ (and $\partial_2(1_{k-j} \otimes 1) = \ell_k$). So for an $(a_2, a_3, \ldots , a_k) \in \Z^{k-1}$ or $\Z_2^{k-1}$, $\partial_2(a_2, a_3, \ldots , a_k) = \sum_i a_{2i} \ell_k$, therefore $(a_2, a_3, \ldots , a_k) \in \Ker \partial_2$ if and only if $\sum_i a_{2i}$ is even.
\end{proof}

We will define elements $g_2, g_3, \ldots , g_k \in \Z^{k-1}$ that form a basis for the subgroup $\Image c$ (in the oriented case). If we omit $g_2$, the rest (as elements of $\Z_2^{k-1}$) will form a basis for $\Image c$ in the unoriented case.

\begin{defin}
For $2 \leq i \leq k$ let $g_i = (g_{i,2}, g_{i,3}, \ldots , g_{i,k}) \in \Z^{k-1}$ or $\Z_2^{k-1}$, where
\[
\begin{aligned}
&g_{2,2}=2 & &\text{ and } g_{2,j}=0 \text{ if $j>2$, } \\
&g_{i,i}=1 & &\text{ and } g_{i,j}=0 \text{ if $j \neq i$} & &\text{for $i$ odd, } \\
&g_{i,2}=1 \text{, } g_{i,i}=1 & &\text{ and } g_{i,j}=0 \text{ if $j \neq 2, i$} \quad\quad & &\text{for $i>2$ even. } \\
\end{aligned}
\]
\end{defin}

It follows from Theorem \ref{thm:invar} that for every $2 \leq i \leq k$ (or $3 \leq i \leq k$) there is a unique cobordism class $\alpha_i \in \Cob^1_{SO}(2,k)$ (or $\Cob^1_O(2,k)$) such that $c(\alpha_i) = g_i$. Then $\alpha_2, \alpha_3, \ldots , \alpha_k$ (or $\alpha_3, \alpha_4, \ldots , \alpha_k$) is a basis of $\Cob^1_{SO}(2,k)$ (or $\Cob^1_O(2,k)$).

\begin{thm} \label{thm:repr}
Every basis element $\alpha_i$ has a representative $f_i$ for which the number of singular points is minimal, ie.\ $f_i$ has $2$ singular points if $i$ is even, and $1$ singular point if $i$ is odd.
\end{thm}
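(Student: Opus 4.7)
The plan is to derive lower bounds on the number of singular points from the invariant $c$, then construct explicit representatives attaining these bounds via punctured-surface monodromy.

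For minimality, each singular point of type $z^j$ contributes $\pm 1$ to $c_j$ in the oriented case and $1 \in \Z_2$ in the unoriented case, so any representative of $\alpha_i$ has at least $\sum_{j=2}^k |g_{i,j}|$ singular points. This evaluates to $2$, $1$, and $2$ for $\alpha_2$, $\alpha_i$ (odd $i$), and $\alpha_i$ (even $i > 2$) respectively, matching the claimed minima.

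For the upper bound I will use the classical correspondence between $k$-fold branched coverings of a closed surface $\Sigma$ branched over $\{p_1, \ldots, p_n\}$ and homomorphisms $\pi_1(\Sigma \setminus \{p_1, \ldots, p_n\}) \to S_k$ realizing the prescribed cycle types around the punctures. For $\alpha_2$: let $f_2$ be $z \mapsto z^2 \colon S^2 \to S^2$ disjointly unioned with $k-2$ identity maps of $S^2$, giving two $z^2$-type singular points. For odd $i$: take $\Sigma = T^2$ with one puncture $*$, so that $\pi_1(T^2 \setminus \{*\}) = \langle a, b \rangle$ with peripheral loop $[a, b]$; choose $\phi, \psi \in S_k$ so that $[\phi, \psi]$ equals a single $i$-cycle, which extends the induced unbranched $k$-fold covering of $T^2 \setminus \{*\}$ across $*$ to a branched covering $f_i$ with exactly one $z^i$-type singular point. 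For even $i > 2$: take $\Sigma = T^2$ with two punctures, send the peripheral loops to $\sigma = (1\,2)$ and $\tau = (1\,2\,\ldots\,i)$, and choose $\phi, \psi$ with $[\phi, \psi] = \tau^{-1}\sigma^{-1}$; a direct calculation shows $\tau^{-1}\sigma^{-1}$ is an $(i-1)$-cycle on $\{2, \ldots, i\}$, fixing the remaining elements. Choosing orientations of the bundles $\tilde{\xi}_j$ consistently makes every singular-point sign equal to $+1$, so $c(f_i) = g_i$, and injectivity of $c$ (Theorem \ref{thm:invar}) then forces $[f_i] = \alpha_i$.

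The main obstacle is the group-theoretic input: realizing an odd-length cycle in $S_k$ as a single commutator $[\phi, \psi]$. This is classical---extending the base identity $(1\,2\,3) = [(1\,2), (2\,3)]$, or invoking Ore's theorem for $A_k$ when $k \geq 5$---but it is the only step requiring genuine work. The remaining verifications (the cycle structure of $\tau^{-1}\sigma^{-1}$, matching with the local $z^j$ models, and orientation bookkeeping to arrange all signs to be $+1$) are routine.
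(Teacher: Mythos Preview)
Your proposal is correct and takes a genuinely different route from the paper. The paper builds each $f_i$ by explicit cut-and-paste: for odd $i=2i'-1$ it decomposes the target torus $\Sigma_1$ into a disk and a handle, decomposes the source $\Sigma_{i'}$ into a disk and $i'$ handles, and specifies the map piecewise (the disk goes by $z^i$, one handle diffeomorphically, the remaining handles by the double cover $z^2\times\id$), checking with figures that the boundary identifications match; for even $i$ it does an analogous two-branch-point construction that recycles $h_{i-1}$ on a punctured-torus piece. It then sets $f_i=h_i\sqcup(\sqcup_{k-i}\id_{\Sigma_1})$.

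You replace all of this by the classical monodromy correspondence, so the construction collapses to a purely group-theoretic fact: every odd-length cycle in $S_k$ is a single commutator (handled by the $3$-cycle identity and Ore). This is shorter and more conceptual, and it makes visible \emph{why} the target must have genus $1$ rather than $0$: the local monodromy at the unique branch point is a nontrivial cycle, which cannot equal $1$ but can equal $[\phi,\psi]$. What you give up is the explicit description of the source surface that the paper reads off directly from its gluing; in your version the source genus is only implicit (recoverable from Riemann--Hurwitz). You also add the one-line lower-bound argument from the invariant $c$, which the paper's proof omits, leaving ``minimal'' to be understood from the definition of the $g_i$.
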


\begin{rem}
Recall that the functions $c_j$ count singular points with signs (or modulo $2$ in the unoriented case), so the actual number of singular points of type $\zj$ of a branched covering $f$ may be larger that $|c_j(f)|$.
\end{rem}

\begin{proof}
We will construct such representatives $f_i$.

Since $S^2$ can be identified with $\C \cup \{ \infty \}$, the map $\boldsymbol{\mathsf{z}^2} \: \C \rightarrow \C$ extends to a branched covering $\boldsymbol{\bar{\mathsf{z}}^2} \: S^2 \rightarrow S^2$. Let $f_2 = \boldsymbol{\bar{\mathsf{z}}^2} \bigsqcup \bigl( \bigsqcup_{k-2} \id_{S^2} \bigr) \: \bigsqcup_{k-1} S^2 \rightarrow S^2$. The map $\boldsymbol{\bar{\mathsf{z}}^2}$, and thus $f_2$ has $2$ singular points of type $\boldsymbol{\mathsf{z}^2}$. The submanifold $\widetilde{V}_2$ needs to be oriented, so we give both singular points a positive sign, then $c(f_2) = g_2$, so $[f_2] = \alpha_2$.

\begin{figure}[h]
\begin{center}
\includegraphics[scale=0.6]{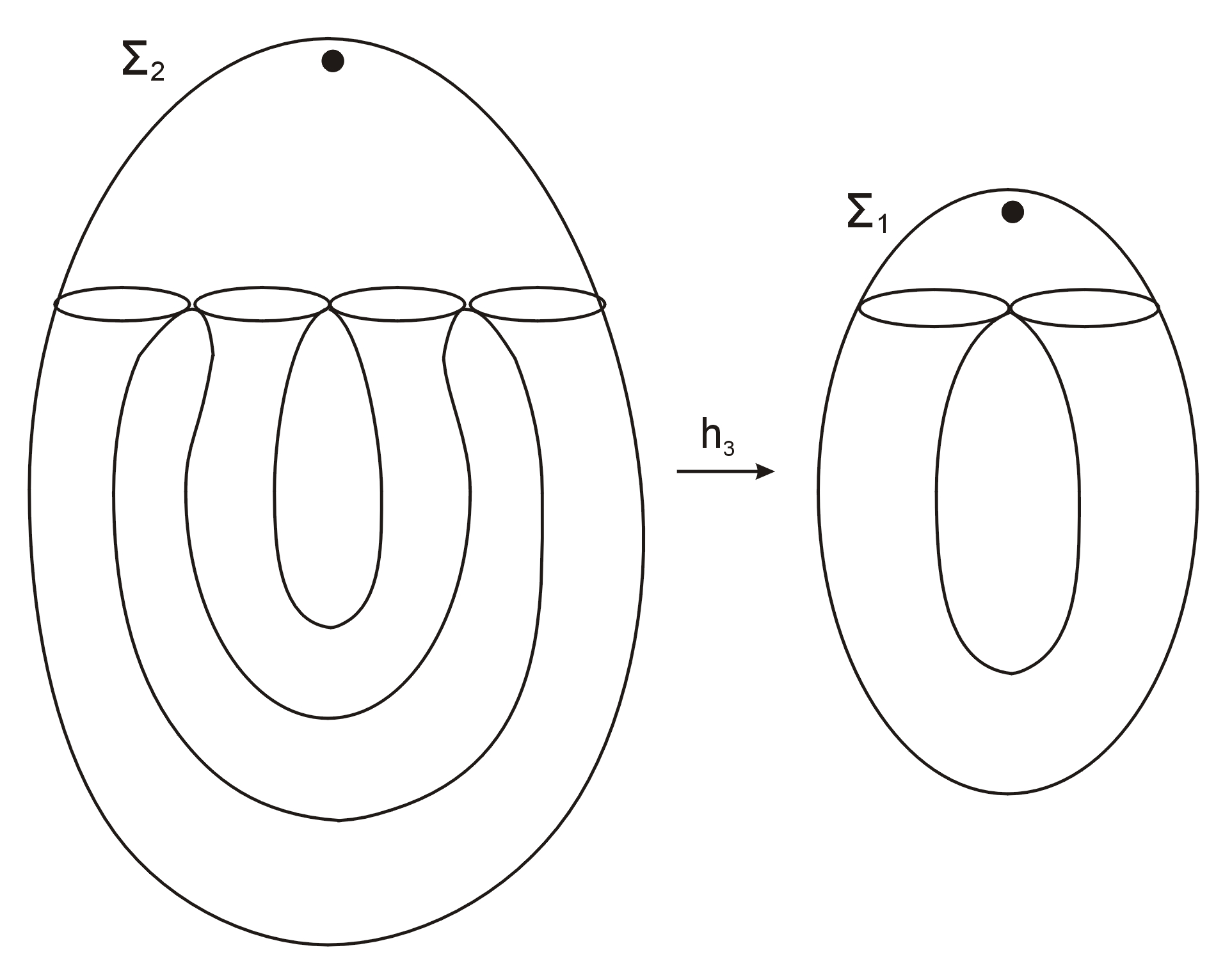}
\end{center}
\begin{center}
Figure 1.
\end{center}
\end{figure}

Let $\Sigma_g$ denote the oriented surface of genus $g$. We will define a $3$-fold branched covering $h_3 \: \Sigma_2 \rightarrow \Sigma_1$ (see Figure 1). The target $\Sigma_1$ is divided into two parts, an open disk $D^2$ and an open handle $S^1 \times I$, the common boundary of the two parts is the wedge of two circles, $S^1 \vee S^1$. Similarly, the source $\Sigma_2$ is divided into two parts, a disk and the union of two handles, and the common boundary is the wedge of $4$ circles, $\vee_4 S^1$. The map $h_3$ maps $D^2 \subset \Sigma_2$ onto $D^2 \subset \Sigma_1$ by $\boldsymbol{\mathsf{z}^3}$. One of the handles of $\Sigma_2$ (the outer one in Figure 1) is mapped diffeomorphically onto the handle of $\Sigma_1$, while the other one is mapped by the $2$-fold covering $\boldsymbol{\mathsf{z}^2} \times \id \: S^1 \times I \rightarrow S^1 \times I$. The map between the boundaries is a $3$-fold covering $\vee_4 S^1 \rightarrow S^1 \vee S^1$. The neighbourhoods of these boundaries are shown in Figure 2. The restriction of $h_3$ to the top boundary of this neighbourhood is the map $\boldsymbol{\mathsf{z}^3} \: S^1 \rightarrow S^1$, so if the target $S^1$ is divided into two segments, $a$ and $b$, then the preimage of each of $a$ and $b$ will consist of $3$ segments. If the circles in $S^1 \vee S^1$ are labelled by $a$ and $b$ again, then the preimage of $a$ in $\vee_4 S^1$ consists of two circles, one is mapped diffeomorphically onto $a$, the other by the map $\boldsymbol{\mathsf{z}^2}$ (and similarly for $b$). The map $\bigsqcup_4 S^1 \rightarrow S^1 \bigsqcup S^1$ between the bottom boundaries has a similar description.

\begin{figure}[h]
\begin{center}
\includegraphics[scale=0.6]{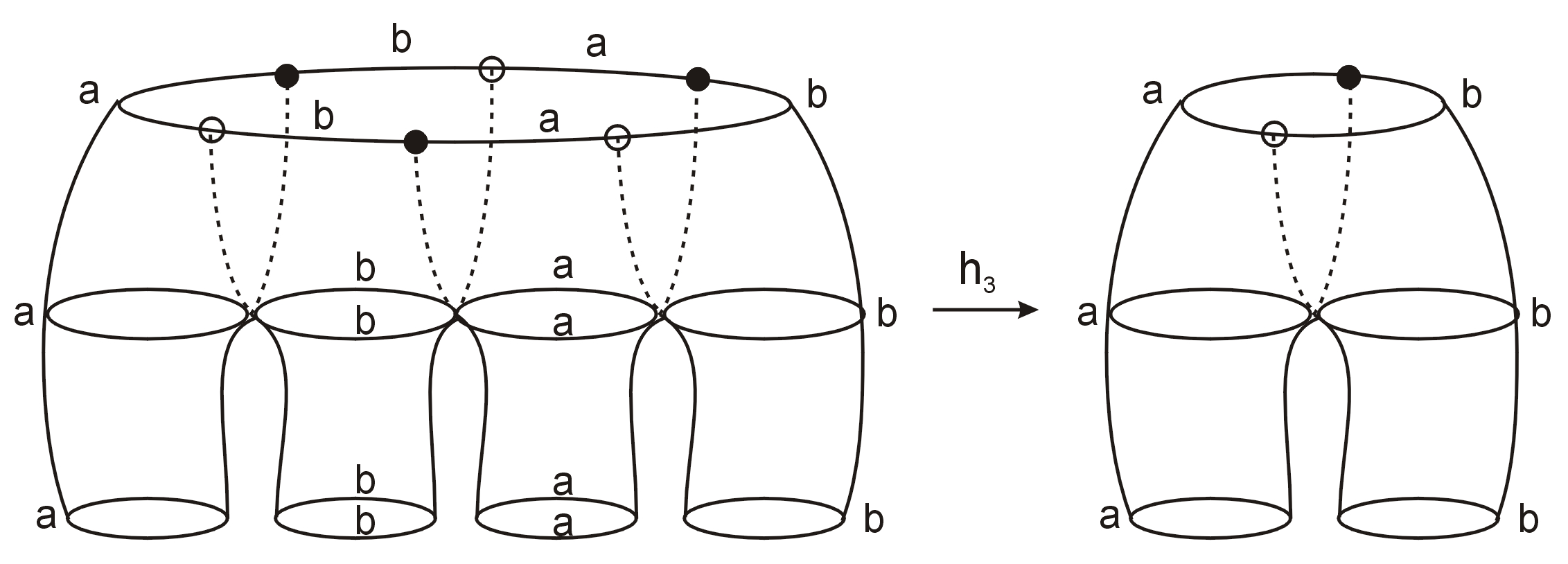}
\end{center}
\begin{center}
Figure 2.
\end{center}
\end{figure}

Then $f_3$ is defined by $f_3 = h_3 \bigsqcup \bigl( \bigsqcup_{k-3} \id_{\Sigma_1} \bigr) \: \Sigma_2 \bigsqcup \bigl( \bigsqcup_{k-3} \Sigma_1 \bigr) \rightarrow \Sigma_1$. It is a $k$-fold branched covering with $1$ singular point of type $\boldsymbol{\mathsf{z}^3}$ (in the oriented case it is given a positive sign), so $c(f_3) = g_3$, so $[f_3] = \alpha_3$.

For an odd $i = 2i'-1 > 3$ we can define an $f_i$ similarly. First, $h_i \: \Sigma_{i'} \rightarrow \Sigma_1$ is defined analogously to $h_3$. The source $\Sigma_{i'}$ is divided into two parts, a disk and the union of $i'$ handles, the common boundary is $\vee_{i+1} S^1$. The disk is mapped onto $D^2 \subset \Sigma_1$ by $\boldsymbol{\mathsf{z}^i}$. One of the handles is mapped diffeomorphically onto the handle of $\Sigma_1$, the others are mapped by the $2$-fold covering $\boldsymbol{\mathsf{z}^2} \times \id$. The map between the boundaries is an $i$-fold covering $\vee_{i+1} S^1 \rightarrow S^1 \vee S^1$. Two of the circles are mapped diffeomorphically, the others by $\boldsymbol{\mathsf{z}^2}$. Using this $h_i$ we define $f_i = h_i \bigsqcup \bigl( \bigsqcup_{k-i} \id_{\Sigma_1} \bigr) \: \Sigma_{i'} \bigsqcup \bigl( \bigsqcup_{k-i} \Sigma_1 \bigr) \rightarrow \Sigma_1$, then $[f_i] = \alpha_i$.

\begin{figure}[h]
\begin{center}
\includegraphics[scale=0.6]{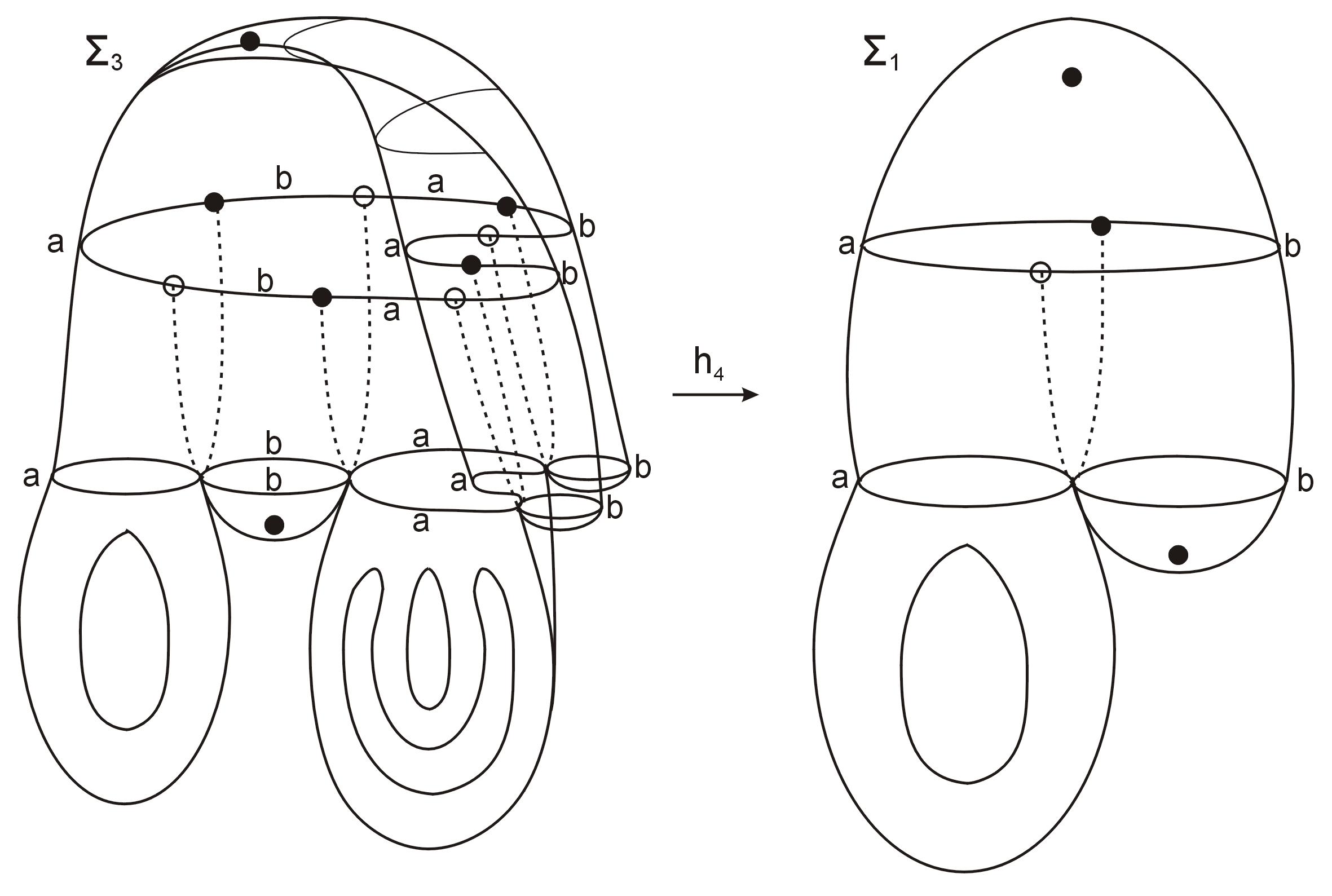}
\end{center}
\begin{center}
Figure 3.
\end{center}
\end{figure}

Next we will define an $i$-fold branched covering $h_i \: \Sigma_{i'+1} \rightarrow \Sigma_1$ for an even $i = 2i'$ (see Figure 3 for $i=4$). The target $\Sigma_1$ is divided into two parts, the upper one is a disk $D^2$, the lower one is the union of a disk and a punctured torus $\Sigma_1 \setminus D^2$, and their common boundary is $S^1 \vee S^1$. The source $\Sigma_{i'+1}$ is divided into two parts, the upper one is a disk, the lower one is the union of $(i-1)$ disks, a $\Sigma_1 \setminus D^2$ and a $\Sigma_{i'} \setminus D^2$. The common boundary is $\vee_{i+1} S^1$. The map $h_i$ maps the upper disk of $\Sigma_{i'+1}$ into the upper disk of $\Sigma_1$ by $\boldsymbol{\mathsf{z}^i}$. All $(i-1)$ lower disks of $\Sigma_{i'+1}$ are mapped into the lower disk of $\Sigma_1$, one by the map $\boldsymbol{\mathsf{z}^2}$, the others diffeomorphically. The punctured torus of $\Sigma_{i'+1}$ is mapped into that of $\Sigma_1$ diffeomorphically. Finally, $\Sigma_{i'} \setminus D^2 \subset \Sigma_{i'+1}$ is mapped into $\Sigma_1 \setminus D^2 \subset \Sigma_1$ by the restriction of $h_{i-1}$ (since $h_{i-1} \: \Sigma_{i'} \rightarrow \Sigma_1$ is an $(i-1)$-fold branched covering with $1$ singular point, we can cut out a small neighbourhood $D^2 \subset \Sigma_1$ of the singular point in the target, and its preimage $D^2 \subset \Sigma_{i'}$, and we will get an $(i-1)$-fold covering which is $\boldsymbol{\mathsf{z}^{i-1}}$ on the boundary). The map between the common boundaries is an $i$-fold covering $\vee_{i+1} S^1 \rightarrow S^1 \vee S^1$. One of the circles is mapped by $\boldsymbol{\mathsf{z}^{i-1}}$, another one by $\boldsymbol{\mathsf{z}^2}$, and the remaining $(i-1)$ are mapped diffeomorphically.

Let $f_i = h_i \bigsqcup \bigl( \bigsqcup_{k-i} \id_{\Sigma_1} \bigr) \: \Sigma_{i'+1} \bigsqcup \bigl( \bigsqcup_{k-i} \Sigma_1 \bigr) \rightarrow \Sigma_1$. Since $h_i$ has two singular points, one of type $\boldsymbol{\mathsf{z}^2}$ and one of type $\boldsymbol{\mathsf{z}^i}$, the same is true for $f_i$. Both points are given positive sign, so $c(f_i) = g_i$, therefore $[f_i] = \alpha_i$.
\end{proof}

The representatives $f_i$ that we constructed have another minimality property: Among the representatives of $\alpha_i$ that have the minimum number of singular points $f_i$ has the target with minimal genus. This is because $\alpha_i$ has no such representative over $S^2$ if $i > 2$ (see Proposition \ref{prop:non-ex} below). We will give a new proof for this well-known fact using the classifying space $B^1(k)$.

\begin{rem}
By Theorem \ref{thm:sphere} every cobordism class, in particular every basis element $\alpha_i$, has a representative with target $S^2$. However, for $i > 2$ these representatives must have singular points that cancel out when counted with signs (or modulo $2$).
\end{rem}

\begin{prop} \label{prop:non-ex}
There is no branched covering over $S^2$ with a single singular point, or with two singular points of different types.
\end{prop}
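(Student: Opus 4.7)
The plan is to reduce the statement to an elementary fact about monodromies around punctures of $S^2$, via the universal branched covering. By Theorem \ref{thm:every}, any branched covering $f \: \widetilde{M} \to S^2$ is induced by a generic map $u \: S^2 \to B^1(k)$. Let $p_1, \ldots, p_s \in S^2$ be the singular points of $f$, of types $\zj$ with exponents $j_1, \ldots, j_s \geq 2$, and let $D_i$ be small disjoint open disk neighbourhoods of the $p_i$. By the explicit construction of $u$ in the proof of Theorem \ref{thm:every}, we may assume that $u(D_i) \subset E\mu^{j_i}_*(\gamma) \times B^0(k-j_i)$ and $u(N) \subset B^0(k) = K(S_k,1)$, where $N = S^2 \setminus \bigsqcup_i \interior D_i$; in particular $u(\partial D_i) \subset S\mu^{j_i}_*(\gamma) \times B^0(k - j_i)$.

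Next I would identify the monodromy class $\sigma_i := u_*[\partial D_i] \in \pi_1(B^0(k)) = S_k$. By the defining commutative square of the gluing map $r_{j_i}$ in Definition \ref{def:univ1}, the composition $\partial D_i \hookrightarrow S\mu^{j_i}_*(\gamma) \times B^0(k-j_i) \to B^0(k)$ classifies the $k$-fold covering of $S^1$ obtained as the disjoint union of $\zj \: S^1 \to S^1$ (the $j_i$-fold connected cyclic covering) with $(k - j_i)$ trivial copies. Genericity of $u$ ensures that $\partial D_i$ wraps exactly once around the fibre circle of $\mu^{j_i}_*(\gamma)$ over $u(p_i)$, so $\sigma_i$ is a single $j_i$-cycle together with $(k - j_i)$ fixed points, and in particular is nontrivial.

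Finally I would exploit the standard relation in $\pi_1$ of a punctured sphere: since $N$ is a $2$-sphere with $s$ open disks removed, the boundary loops (suitably oriented) satisfy $[\partial D_1] \cdots [\partial D_s] = 1$ in $\pi_1(N)$, so applying $u_*$ gives $\sigma_1 \cdots \sigma_s = 1$ in $S_k$. For $s = 1$ this forces the nontrivial $\sigma_1$ to equal the identity, a contradiction; for $s = 2$ it forces $\sigma_1 = \sigma_2^{-1}$, whence $\sigma_1$ and $\sigma_2$ have the same cycle length, i.e.\ $j_1 = j_2$, contradicting the assumption that the two singular points have different types. The only delicate point I expect is the precise identification of the local monodromy with a single $j$-cycle, which however amounts to directly unwrapping the gluing data in Definition \ref{def:univ1} together with the observation that $\zj \: S^1 \to S^1$ is the standard connected $j$-fold cyclic covering.
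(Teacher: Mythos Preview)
Your argument is correct and follows essentially the same approach as the paper's proof: both use the inducing map $u$ from Theorem \ref{thm:every}, restrict to the boundary circles of disk neighbourhoods of the singular values (which land in $B^0(k)=K(S_k,1)$), identify the resulting loops with $j$-cycles in $S_k$, and derive a contradiction from the topology of the punctured sphere. The only cosmetic difference is that you phrase the conclusion via the standard relation $\sigma_1\cdots\sigma_s=1$ in $\pi_1$ of an $s$-punctured sphere, whereas the paper argues directly that the single boundary loop must be null-homotopic (one singular point) or that the two boundary loops are freely homotopic and hence conjugate (two singular points); these are equivalent formulations of the same obstruction.
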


\begin{proof}
Suppose that the branched covering $f \: \wtilde{M} \rightarrow S^2$ has a single singular point of type $\zj$. Then there is a decomposition $S^2 = D^2_j \cup D^2$, where $D^2_j = e_j(E\xi_j)$, and the other disk is its complement. The inducing map $u \: S^2 \rightarrow B^1(k)$ constructed in Theorem \ref{thm:every} maps $D^2_j$ into a fiber of $E\mu^j_*(\gamma) \times B^0(k-j)$, ie.\ $u \big| _{D^2_j}$ is the map $i'_j$ from the proof of Theorem \ref{thm:2dim-group-so}. Therefore $u \big| _{\partial D^2_j} = r_j \circ i'_j = s_j \: S^1 \rightarrow B^0(k) = BS_k$, it represents a $j$-cycle in $\pi_1(BS_k) = S_k$, so it is not null-homotopic. On the other hand, $u \big| _{\partial D^2_j} = u \big| _{\partial D^2}$ is the restriction of $u \big| _{D^2} \: D^2 \rightarrow B^0(k)$, so it is null-homotopic. This contradiction shows that $f$ can not exist. 

Now suppose that $f$ has two singular points of types $\zj$ and $\boldsymbol{\mathsf{z}^h}$ for some $j \neq h$. We have a decomposition $S^2 = D^2_j \cup (S^1 \times I) \cup D^2_h$, where $D^2_j = e_j(E\xi_j)$ and $D^2_h = e_h(E\xi_h)$. Let $u \: S^2 \rightarrow B^1(k)$ be the inducing map from Theorem \ref{thm:every}. Again $u \big| _{D^2_j} = i'_j$ and $u \big| _{D^2_h} = i'_h$, so $u \big| _{\partial D^2_j} = r_j \circ i'_j = s_j \: S^1 \rightarrow B^0(k)$ and $u \big| _{\partial D^2_h} = r_h \circ i'_h = s_h \: S^1 \rightarrow B^0(k)$. The former represents a $j$-cycle in $\pi_1(B^0(k)) = S_k$, the latter represents an $h$-cycle, and these are not conjugates in $S_k$, because $j \neq h$. This implies that $u \big| _{\partial D^2_j}$ and $u \big| _{\partial D^2_h}$ are not homotopic. On the other hand, $u \big| _{S^1 \times I}$ is a homotopy between them, so we have a contradiction again. 
\end{proof}

\begin{prop} \label{prop:simply-c}
The classifying space $B^1(k)$ is simply-conneced. 
\end{prop}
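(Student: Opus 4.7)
The plan is to apply van Kampen's theorem to the decomposition of $B^1(k)$ given in Definition \ref{def:univ1}. Starting with $\pi_1(B^0(k)) = S_k$, each attachment of $E\mu^j_*(\gamma) \times B^0(k-j)$ along $S\mu^j_*(\gamma) \times B^0(k-j)$ via $r_j$ modifies $\pi_1$ through a pushout of groups, and I want to show the final result is trivial.

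The key geometric input is that any fiber $S^1$ of the sphere bundle $S\mu^j_*(\gamma)$ bounds a $2$-disk (the corresponding fiber of the disk bundle) in $E\mu^j_*(\gamma)$, so its image under $r_j$ becomes null-homotopic in $\pi_1(B^1(k))$. From the calculation already carried out in the proof of Theorem \ref{thm:2dim-group-so}, this image represents the $j$-cycle $s_j \in S_k$. In particular, taking $j=2$ makes a single transposition trivial; since one transposition normally generates $S_k$, the entire image of $S_k$ in $\pi_1(B^1(k))$ collapses to the identity.

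What remains is to verify that van Kampen introduces no extra generators from the $\pi_1$ of the attached pieces. In the oriented case this is immediate: $\pi_1(BSO_2) = 0$, so $\pi_1(E\mu^j_*(\gamma_{SO}) \times B^0(k-j)) = S_{k-j}$, and $r_j$ identifies this subgroup with the natural $S_{k-j} < S_k$ (acting on the $k-j$ regular sheets), which is already part of $\pi_1(B^0(k))$ and hence already killed. The main obstacle is the unoriented case, where $\pi_1(BO_2) = \Z/2$ is potentially a new contribution. Here I would identify $S\mu^j_*(\gamma_O) \simeq BD_j$ via the stabilizer of a unit vector under the $\mu^j$-action of $O_2$ on $\R^2$ (so $\pi_1(S\mu^j_*(\gamma_O)) = D_j = \Z/j \rtimes \Z/2$), and then trace through the covering $r_j$ to see that the reflection generator of $\Z/2 \subset D_j$ (which also generates $\pi_1(BO_2)$) is sent under $r_{j*}$ to a specific product of transpositions in $S_k$. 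Since this image lies in the subgroup that has already been killed, the $\Z/2$ from $\pi_1(BO_2)$ is forced to be trivial in $\pi_1(B^1_O(k))$ as well, and simple connectivity follows in both cases.
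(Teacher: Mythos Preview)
Your van Kampen argument is correct: the key relations do kill $S_k$ (already at $j=2$), and your handling of the extra $\Z/2$ from $\pi_1(BO_2)$ in the unoriented case is right, since the reflection in $D_j \cong \pi_1(S\mu^j_*(\gamma_O))$ maps to the generator of $\Z/2$ on one side and to an element of $S_j \subset S_k$ (the dihedral permutation of the $j$ sheets) on the other, forcing it to become trivial once $S_k$ has collapsed.

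The paper takes a quite different route. Rather than computing the pushout of fundamental groups, it argues directly from the universal property: a generic loop $u \: S^1 \to B^1(k)$ induces a covering $f \: \wtilde{M} \to S^1$ (no singularities in dimension~$1$), which extends to a map $g \: \widetilde{N} \to D^2$ by coning each $\zj \: S^1 \to S^1$ to $\zj \: D^2 \to D^2$; after a small perturbation $g$ is a branched covering, hence induced by some $v \: D^2 \to B^1(k)$ (Theorem~\ref{thm:every}), and Theorem~\ref{thm:uniq} makes $v|_{S^1}$ homotopic to $u$. This proof is uniform in the oriented and unoriented cases and sidesteps any explicit group theory, whereas yours requires the case split and the identification $S\mu^j_*(\gamma_O) \simeq BD_j$. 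On the other hand, your computation yields finer information: it shows precisely which relations (the $j$-cycles becoming trivial) are responsible for killing $\pi_1$, and that already the $j=2$ attachment suffices.
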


\begin{proof}
Let $u \: S^1 \rightarrow B^1(k)$ be any loop. We may assume that it is generic, so it induces a branched covering $f \: \wtilde{M} \rightarrow S^1$. Its singular submanifolds have codimension $2$, so they are empty, so it is a covering. So $\wtilde{M}$ is a disjoint union of copies of $S^1$ and the restriction of $f$ to any component is $\zj$ for some $j$. This $f$ extends to a map $g \: \widetilde{N} \rightarrow D^2$, where $\widetilde{N}$ is a disjoint union of copies of $D^2$ and the restricition of $g$ to any component is $\zj$. After applying a perturbation around the origin we may assume that the singular submanifolds of $g$ are embedded and disjoint, so $g$ is a branched covering. By Theorem \ref{thm:every} there is a $v \: D^2 \rightarrow B^1(k)$ that induces $g$. Then $v \big| _{S^1}$ induces $f$, so by Theorem \ref{thm:uniq} it is homotopic to $u$. Therefore $u$ is null-homotopic. 
\end{proof}

\begin{thm} \label{thm:sphere}
Every cobordism class in $\Cob^1(2,k)$ can be represented by a branched covering over $S^2$.
\end{thm}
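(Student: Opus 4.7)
My plan is to pass through the classifying space $B^1(k)$ and reduce the problem to a Hurewicz-type statement. By Theorem \ref{thm:cob-isom} there is an isomorphism $\Cob^1(2,k) \cong \Omega^{SO}_2(B^1(k))$ sending the class of a branched covering $f$ to the bordism class of any generic map $u \: M \to B^1(k)$ that induces it. Conversely, by Proposition \ref{prop:ind} and Theorem \ref{thm:bordism}, any generic map $u' \: S^2 \to B^1(k)$ bordant to $u$ induces a branched covering over $S^2$ that is cobordant to $f$. So it suffices to find a bordism representative of $[u] \in \Omega^{SO}_2(B^1(k))$ whose domain is $S^2$.

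By Lemma \ref{lem:3dim}, $\Omega^{SO}_2(B^1(k)) \cong H_2(B^1(k))$ via the fundamental-class map $[M, u] \mapsto u_*([M])$. By Proposition \ref{prop:simply-c}, the space $B^1(k)$ is simply-connected, so the Hurewicz theorem gives an isomorphism $\pi_2(B^1(k)) \cong H_2(B^1(k))$. In particular the class $u_*([M]) \in H_2(B^1(k))$ is realised as $u'_*([S^2])$ for some continuous map $u' \: S^2 \to B^1(k)$. Under the isomorphism $\Omega^{SO}_2(B^1(k)) \cong H_2(B^1(k))$ this means that $[S^2, u'] = [M, u]$ in the oriented bordism group. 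After a small homotopy (using Thom transversality, as in the discussion following the definition of generic maps), we may take $u'$ generic, so that by Proposition \ref{prop:ind} it induces a $k$-fold branched covering $f' \: \wtilde{M}' \to S^2$. Applying Theorem \ref{thm:bordism} to $u$ and $u'$ yields that $f$ and $f'$ are cobordant, and $f'$ is the desired representative over $S^2$.

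The main thing to be careful about is the compatibility of the three identifications used: the isomorphism $\Cob^1(2,k) \cong \Omega^{SO}_2(B^1(k))$ from Theorem \ref{thm:cob-isom}, the low-dimensional Atiyah--Hirzebruch isomorphism $\Omega^{SO}_2 \cong H_2$ from Lemma \ref{lem:3dim}, and the Hurewicz isomorphism. All three are induced by the respective fundamental classes, so a bordism-level equality follows from the homology-level equality, and the argument is essentially formal once Proposition \ref{prop:simply-c} is in hand.
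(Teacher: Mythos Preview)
Your argument is correct and is essentially the same as the paper's: both pass through Theorem~\ref{thm:cob-isom} and Lemma~\ref{lem:3dim} to identify $\Cob^1(2,k)$ with $H_2(B^1(k))$, invoke Proposition~\ref{prop:simply-c} and Hurewicz to realise each class by a map $S^2 \to B^1(k)$, and then pull back. Your write-up is a bit more explicit about making the representative generic and about the compatibility of the three isomorphisms, but the route is identical.
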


\begin{proof}
By Proposition \ref{prop:simply-c} $B^1(k)$ is simply-connected. By the Hurewicz theorem $\pi_2(B^1(k)) \cong H_2(B^1(k)) \cong \Omega^{SO}_2(B^1(k))$, and the composition of these isomorphisms maps the homotopy class of a $u \: S^2 \rightarrow B^1(k)$ into its bordism class. 

By Theorem \ref{thm:cob-isom} every cobordism class in $\Cob^1(2,k)$ corresponds to a bordism class in $\Omega^{SO}_2(B^1(k))$. The isomorphism $\pi_2(B^1(k)) \cong \Omega^{SO}_2(B^1(k))$ implies that every bordism class contains a map $u \: S^2 \rightarrow B^1(k)$. This induces a branched covering over $S^2$ which is in the corresponding cobordism class.
\end{proof}

\end{document}